\theoremstyle{plain}
\newtheorem{theorem}{Theorem}[section]
\newtheorem{definition}[theorem]{Definition}
\newtheorem{lemma}[theorem]{Lemma}
\newtheorem{proposition}[theorem]{Proposition}
\newtheorem{corollary}[theorem]{Corollary}
\newtheorem{remark}[theorem]{Remark}
\newtheorem{notation}[theorem]{Notation}
\newtheorem{example}[theorem]{Example}
\newtheorem{remark-question}[section]{Remark-Question}
\newcommand\fra{{\mathfrak a}} 
\newcommand\frg{{\mathfrak g}}
\newcommand\frh{{\mathfrak h}}
\newcommand\Real{{\mathfrak R}{\frak e}\,} 
\definecolor{fondo}{rgb}{0.93,0.93,0.93}
\definecolor{m}{rgb}{0.9,0,0.9}
\renewcommand*{\eqref}[1]{%
  \hyperref[{#1}]{\textup{\tagform@{\ref*{#1}}}}%
}
\begin{document}
\title[]{The ascending central series of nilpotent Lie algebras\\ with complex structure}
\subjclass[2000]{Primary 17B30; Secondary 53C30, 53C15.}

\author{Adela Latorre}
\address[A. Latorre and R. Villacampa]{Centro Universitario de la Defensa\,-\,I.U.M.A., Academia General
Mili\-tar, Crta. de Huesca s/n. 50090 Zaragoza, Spain}
\email{adela@unizar.es}
\email{raquelvg@unizar.es}

\author{Luis Ugarte}
\address[L. Ugarte]{Departamento de Matem\'aticas\,-\,I.U.M.A.\\
Universidad de Zaragoza\\
Campus Plaza San Francisco\\
50009 Zaragoza, Spain}
\email{ugarte@unizar.es}

\author{Raquel Villacampa}


\begin{abstract}
We obtain several restrictions on the terms of the ascending central series of a nilpotent Lie algebra $\frg$ under the presence of a complex structure $J$.  In particular, we find a bound for the dimension of the center of $\frg$ when it does not contain any non-trivial $J$-invariant ideal.
Thanks to these results, we provide a structural theorem
describing the ascending central series of 8-dimensional nilpotent Lie algebras $\frg$ admitting this particular type of complex structures $J$.
Since our method is constructive, it allows us to describe the complex structure equations
that parametrize all such pairs $(\frg, J)$.
\end{abstract}

\maketitle

\setcounter{tocdepth}{3} \tableofcontents


\section{Introduction}

\noindent
Let $\frg$ be an even-dimensional real Lie algebra.  A complex structure on $\frg$ is an endomorphism $J\colon\frg\longrightarrow\frg$
satisfying $J^2=-\textrm{Id}$ and the ``Nijenhuis condition''
\begin{equation}\label{Nijenhuis}
N_J(X,Y):=[X,Y]+J[JX,Y]+J[X,JY]-[JX,JY]=0,
\end{equation}
for all $ X,Y\in\frg$.
Finding Lie algebras endowed with such structures constitutes an interesting problem with important algebraic and geometrical applications.
In the last years several results on classifications of Lie algebras $\frg$ admitting complex structures $J$ have been published,
mainly dealing with low dimensions.
In the $4$-dimensional case, the solvable Lie algebras admitting a complex structure were classified by Ovando in~\cite{O}. Concerning dimension $6$, Andrada, Barberis, and Dotti classified in~\cite{ABD} the pairs $(\frg,J)$
where $\frg$ is any Lie algebra and $J$ is an abelian complex structure,
i.e. $J$ satisfies $[JX, JY] = [X,Y]$ for every $X,Y\in\frg$.
The classification of $6$-dimensional nilpotent Lie algebras that admit a complex structure (not necessarily abelian) was achieved by Salamon in~\cite{S}. Later, the different complex structures on each of these algebras were classified by Ceballos, Otal, Ugarte, and Villacampa in~\cite{COUV}.
In the 6-dimensional solvable case, complex structures of certain specific types are studied in~\cite{AOUV} and~\cite{FOU}.
The existence of complex structures on $6$-dimensional product Lie algebras have been recently
investigated in \cite{CS}.
However, apart from some classifications of complex paralellizable structures,
i.e. those coming from complex Lie algebras (see e.g. \cite{G,Nakamura}),
little is yet known in real dimensions higher than six.

In this paper we focus on nilpotent Lie algebras.
Fixed an even real dimension $2n$,
it seems clear that
the knowledge of a classification of $2n$-dimensional nilpotent Lie algebras could
be of great help in the study of existence of complex structures.
Nonetheless, complete classifications are only known up to dimension~7 (see~\cite{G} and the references therein).
Hence, in order to
investigate the pairs $(\frg, J)$ in dimension~$2n\geq 8$, other techniques are needed.
In fact, when dealing with high dimensional cases, most of the efforts have been directed to obtain
algebraic constraints to the existence of complex structures, see for instance the
papers \cite{Cav-Gualt,GVR,GR} for quasi-filiform Lie algebras,
and the results in~\cite{Mi} bounding the nilpotency step of nilpotent Lie algebras admitting complex structures.
Although some partial classifications are
achieved in eight dimensions, they generally require additional conditions,
such as the existence of hypercomplex structures~\cite{DF},
SKT metrics~\cite{EFV}, or balanced metrics compatible with abelian complex structures~\cite{AV}.
Some other partial results on the construction of complex structures have been recently obtained in~\cite{CCO}.

Our main objective here is to develop a method for constructing
all complex structures on even-dimensional nilpotent Lie algebras $\frg$.
This method will allow us to obtain several restrictions
on the terms of the ascending central series of $\frg$ imposed by the existence of a complex structure.  Among them, we prove an upper bound on the dimension of the center of $\frg$ when this subspace does not contain any non-trivial $J$-invariant ideal.  In such case, a structural theorem in eight dimensions is provided, together with a parametrization of
the space of complex structures.
We next explain in detail the contents of the paper.

\smallskip

In Section~\ref{complex-structures} we consider a partition of the space of complex structures $J$ on a
nilpotent Lie algebra $\frg$ into
\emph{quasi-nilpotent} and \emph{strongly non-nilpotent} structures (see Definition~\ref{tipos_J}
and Figure~\ref{diagrama_tipos_J}).
The first class is given by those~$J$'s
for which there exists a non-trivial $J$-invariant ideal in the center of~$\frg$.  We notice that this class contains those
complex structures of nilpotent type~\cite{CFGU-dolbeault}. The second class
appeared for the first time in~\cite{CFGU-proceeding}, and we will simply refer to
it as the class of
\emph{SnN} complex structures.
After showing that quasi-nilpotent complex structures can be constructed as a certain extension of lower dimensional structures,
we notice that the essentially new complex structures
that arise in each even real dimension
are the SnN ones.
For this reason, in the rest of the paper we mainly focus on these complex structures.

In Section~\ref{SnN} we obtain algebraic constraints to the existence of strongly non-nilpotent complex
structures $J$ on a nilpotent Lie algebra $\frg$ in terms of its ascending central series $\{\frg_k\}_k$.
One can think of SnN complex structures as those $J$'s for which the spaces $\frg_k$ are
far from being $J$-invariant.
This will be reflected in the fact that the center $\frg_1$ of $\frg$ is rather small, whereas the nilpotency step of $\frg$
can be rather large.
Indeed,
in Section~\ref{sec:restrictions} we prove, among other restrictions for the terms $\frg_k$, that
the nilpotency step $s$ of $\frg$ satisfies $s\geq 3$,
and in Section~\ref{proof} we obtain an upper bound for the dimension of $\frg_1$.
An application to the study of existence of complex structures
on products of nilpotent Lie algebras is given in~Section~\ref{aplications}.

Section~\ref{sec:8-dim} is devoted to dimension 8.
The main result is Theorem~\ref{teorema-estructura-acs}, which gives the structure of the ascending central series of 8-dimensional
nilpotent Lie algebras $\frg$ admitting
an SnN complex structure~$J$. For the proof of this result we construct
a \emph{doubly adapted} basis
for each pair $(\frg,J)$ (see Definition~\ref{def-doubly-adapted}) which allows us to explicitly describe the terms $\frg_k$ in the ascending central series of~$\frg$.
Indeed, in eight dimensions one has that dim\,$\frg_1=1$ and $\frg_5=\frg$ (since $s\leq 5$).
In Section~\ref{sec:g2} we focus on the space~$\frg_2$ and find its possible dimensions, together with
its description in terms of appropriate generators. A similar analysis is made in
Section~\ref{sec:g34} for $\frg_3$ and $\frg_4$.
Finally, since our method is constructive, it allows us to describe in Section~\ref{SnN-equations-dim8}
the complex structure equations
that parametrize all the strongly non-nilpotent complex structures on nilpotent Lie algebras of dimension~8.


\section{Classes of complex structures on a nilpotent Lie algebra $\frg$}\label{complex-structures}

\noindent
In this section we introduce different classes of complex structures on a nilpotent Lie algebra $\frg$, and
we show that those complex structures $J$ for which the center of $\frg$ contains a non-trivial $J$-invariant ideal
are a certain extension of lower dimensional pairs $(\tilde \frg, \tilde J)$.
As a consequence, we arrive at the fact that the essentially new complex structures that arise in each even real dimension
are the so-called strongly non-nilpotent complex structures.

\smallskip
The \emph{ascending central series} of a Lie algebra $\frg$ is given by $\{\frg_k\}_{k}$, where
\begin{equation}\label{ascending-series}
\left\{\begin{array}{l}
\frg_0=\{0\}, \text{ and } \\[4pt]
\frg_k=\{X\in\frg \mid [X,\frg]\subseteq \frg_{k-1}\}, \text{ for } k\geq 1.
\end{array}\right.
\end{equation}
Observe that $\frg_1=Z(\frg)$ is the center of~$\frg$.
The Lie algebra $\frg$ is said to be \emph{nilpotent} if there is
an integer $s\geq 1$ such that $\frg_k=\frg$ for every~$k\geq s$.
In this case, the smallest integer $s$ satisfying the latter condition is called the
\emph{nilpotency step} of~$\frg$, and the Lie algebra is said to be \emph{$s$-step nilpotent}.

Let $\frg$ be a nilpotent Lie algebra (NLA for short) of real dimension $2n$. We introduce the following terminology:

\begin{definition}\label{ascending-type}
An NLA $\frg$ {\rm has ascending type $(m_{1},\ldots,m_{s})$}
if $s$ is the nilpotency step of~$\frg$ and~$m_k=\dim \frg_{k}$ for each~$1\leq k\leq s$, being $\frg_k$ the terms in the
ascending central series $\{\frg_k\}_{k}$ of $\frg$.
\end{definition}

Thus, any NLA $\frg$ has an associated $s$-tuple
$$(m_{1},\ldots,m_{s-1},m_s) :=\left(\text{dim\,}\frg_{1},\ldots, \text{dim\,}\frg_{s-1},\text{dim\,}\frg_{s}\right)$$
which strictly increases, i.e. $0<m_{1}<\cdots <m_{s-1}<m_{s}=2n$.
Notice that $m_{1}$ is precisely the dimension of the center of $\frg$. Moreover, since $\frg$ is nilpotent one has that $\text{dim}\,\frg_{s-1}\leq2(n-1)$, and the nilpotency step verifies $s\leq 2n-1$.

Obviously, NLAs with different ascending type are non-isomorphic. However, the converse is only true up to real dimension 4.
Indeed, there are three non-isomorphic 4-dimensional NLAs whose ascending types are $(4)$, $(2,4)$, and $(1,2,4)$.
In contrast, there exist four non-isomorphic 6-dimensional NLAs with the same ascending type $(2,6)$
(see for instance~\cite{CFGU-palermo}, where the ascending type of every 6-dimensional NLA is given).

\smallskip
We observe that the existence of a complex structure $J$ on an NLA $\frg$ imposes restrictions on the ascending type of $\frg$.
These restrictions are completely known up to dimension 6 (see \cite{CFGU-palermo,S,U}),
but they are still to be understood in higher dimensions.
With this goal in mind, we will first reduce the problem to a specific class of complex structures that we will next introduce.

\smallskip
Let $J$ be a complex structure on $\frg$, that is, an endomorphism $J\colon\frg\longrightarrow\frg$
satisfying $J^2=-\textrm{Id}$ and the integrability condition~\eqref{Nijenhuis}.
As it was observed in \cite{CFGU-dolbeault}, the terms $\frg_{k}$ in the series \eqref{ascending-series}
may not be invariant under $J$.
For this reason, a new series $\{\fra_{k}(J)\}_{k}$ adapted to the complex structure~$J$ is introduced in \cite{CFGU-dolbeault}, namely:
\begin{equation}\label{adapted-series}
\left\{\begin{array}{l}
\fra_0(J)=\{0\}, \text{ and } \\[4pt]
\fra_k(J)=\{X\in\frg \mid [X,\frg]\subseteq \fra_{k-1}(J)\ {\rm and\ } [JX,\frg]\subseteq \fra_{k-1}(J)\}, \text{ for } k\geq 1.
\end{array}\right.
\end{equation}
We will refer to the series $\{\fra_{k}(J)\}_{k}$ as the \emph{ascending $J$-compatible series of~$\frg$}.

Note that every $\fra_k(J)\subseteq\frg_{k}$ is an even-dimensional $J$-invariant ideal of $\frg$ satisfying
$0 \leq \dim\,\fra_k(J) \leq m_k$, for every $k \geq 1$.
In particular,
$\fra_1(J)$ is the largest subspace
of the center $\frg_1$ which is $J$-invariant.


\begin{definition}\label{tipos_J}
A complex structure $J$ on an NLA $\frg$ is said to be
\begin{itemize}
\item[(i)] \emph{strongly non-nilpotent}, or \emph{SnN} for short, if $\fra_1(J)=\{0\}$;

\smallskip
\item[(ii)] \emph{quasi-nilpotent}, if it satisfies $\fra_1(J)\neq\{0\}$; moreover, $J$ will be called
 \begin{itemize}
 \item[(ii.1)] \emph{nilpotent}, if there exists an integer $t>0$ such that $\fra_t(J)=\frg$,
 \item[(ii.2)] \emph{weakly non-nilpotent}, if there is an integer $t>0$ such that $\fra_t(J)=\fra_l(J)$, for every $l\geq t$,
           and $\fra_t(J)\neq\frg$.
 \end{itemize}
\end{itemize}
\end{definition}

The notion of nilpotent complex structure was first introduced and studied in~\cite{CFGU-dolbeault}, whereas that of
strongly non-nilpotent appeared for the first time in~\cite{CFGU-proceeding}.

Let us remark that the first division above is based on whether the ascending $J$-compatible series~$\{\fra_{k}(J)\}_{k}$ of~$\frg$
is trivially zero or not.
Also notice that non-nilpotent structures are those
satisfying~$\fra_k(J)\neq\frg$, for every $k\geq 1$,
and they can be either weakly or strongly non-nilpotent.

\smallskip
\begin{figure}[h]
\begin{center}
\includegraphics[scale=0.48]{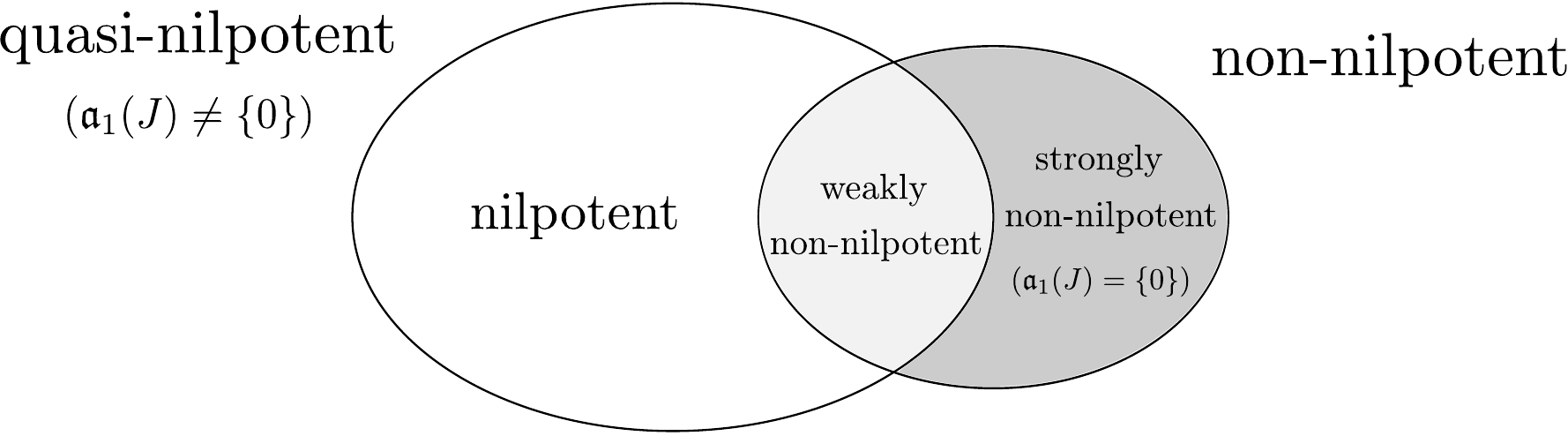}
\caption{Partition of the space of complex structures.}
\label{diagrama_tipos_J}
\end{center}
\end{figure}

It is well known that in dimension 4, any complex structure on an NLA is nilpotent.
In dimension~6,
any complex structure is either nilpotent or strongly non-nilpotent.
Moreover, these two classes cannot coexist on the same 6-dimensional NLA
(see \cite{CFGU-palermo,S,U}).

We next show that weakly non-nilpotent complex structures appear for the first time in dimension~8.
Furthermore, we illustrate that different classes of complex structures
can coexist on the same NLA when the dimension is greater than~6.


\begin{example}\label{ejemplo8}
Let $\frg$ be the 8-di\-men\-sion\-al NLA given by the basis $\{X_1,\ldots,X_8\}$, in terms of which the non-zero brackets are
$$[X_1,X_3]=[X_2,X_4]=X_6,\quad [X_3,X_5]=-X_1,\quad [X_4,X_5]=-X_2.$$
The ascending central series~\eqref{ascending-series} of $\frg$ is
the following one:
$$\frg_1=\langle X_6,X_7,X_8\rangle,\quad \frg_2=\langle X_1,X_2,X_6,X_7,X_8\rangle,\quad \frg_3=\frg,$$
which implies that $\frg$ has ascending type $(3,5,8)$.

Let $J$ be the almost complex structure defined by
$$JX_1=X_2,\ JX_3=X_4,\ JX_5=X_6,\ JX_7=X_8.$$
It is easy to see that $N_J \equiv 0$, i.e. $J$ is integrable, and the ascending $J$-compatible series~\eqref{adapted-series} of $\frg$ satisfies
$\mathfrak{a}_1(J)=\mathfrak{a}_l(J)=\langle X_7,X_8\rangle$, for every $l\geq 1$.
Hence, the complex structure $J$ is weakly non-nilpotent.

Let us observe that we can also define a nilpotent complex structure $\hat{J}$ on $\frg$ as follows:
$$\hat{J}X_1=X_2,\quad \hat{J}X_3=X_4,\quad \hat{J}X_5=X_8,\quad \hat{J}X_6=-X_7.$$
Indeed, $N_{\hat{J}} \equiv 0$ and $\fra_1(\hat J)=\langle X_6,X_7\rangle$,
$\fra_2(\hat J)=\langle X_1,X_2,X_6,X_7\rangle$, and $\fra_3(\hat J)=\frg$.

\end{example}

The following example shows that even nilpotent and strongly non-nilpotent complex structures can coexist on the same NLA.
It should be noted that the dimension of the Lie algebra is 10 (see Corollary~\ref{corollary-estructura-acs}, where this
is proved to be the smallest dimension where this phenomenon can occur).


\begin{example}\label{ejemplo10}{\rm \cite{CFGU-dolbeault}}
Let $\frg$ be the nilpotent Lie algebra of dimension 10 defined by a basis $\{X_1,\ldots,X_{10}\}$ with non-zero brackets
$$[X_3,X_9]=[X_4,X_{10}]=X_1,\quad [X_5,X_9]=[X_6,X_{10}]=X_2,$$
$$[X_7,X_9]=X_3,\quad [X_7,X_{10}]=X_4,\quad [X_8,X_9]=X_5,\quad [X_8,X_{10}]=X_6.$$
It is easy to see that $\frg_1=\langle X_1,X_2\rangle$, $\frg_2=\langle X_1,X_2,X_3,X_4,X_5,X_6\rangle$, and $\frg_3=\frg$.
Hence, $\frg$ has ascending type $(2,6,10)$.

The almost complex structure $J$ given by
$$JX_1=-X_7,\quad JX_2=-X_8,\quad JX_3=X_4,\quad JX_5=X_6,\quad JX_9=X_{10}$$
is integrable. Moreover, $J$ is
strongly non-nilpotent since  $\fra_1(J)=\{0\}$.

On the other hand, $\hat{J}$ defined by
$$\hat{J}X_1=X_2,\quad \hat{J}X_3=X_4,\quad \hat{J}X_5=X_6,\quad \hat{J}X_7=X_8,\quad \hat{J}X_9=X_{10}$$
is also a complex structure on $\frg$, and it satisfies
$\fra_1(\hat{J})=\frg_1$, $\fra_2(\hat{J})=\frg_2$, and $\fra_3(\hat{J})=\frg$.
Therefore, $\hat{J}$ is a nilpotent complex structure on $\frg$.
\end{example}

One can see from Definition~\ref{tipos_J}  that complex structures $J$ of quasi-nilpotent type on an NLA $\frg$ are,
up to a certain point, compatible with the nilpotent structure of $\frg$.
One can take advantage of this fact, as we next show following the ideas in~\cite{CFGU-proceeding,CFGU-dolbeault}.

Let $J$ be a complex structure on an NLA $\frg$ such that $\fra_1(J)\neq\{0\}$.
The ascending $J$-compatible series of $\frg$ satisfies
$$\{0\}=\fra_0(J)\subsetneq \fra_1(J)\subsetneq\ldots\subsetneq \fra_{t-1}(J)\subsetneq \fra_{t}(J)=\fra_l(J), \quad{\rm for\ }l\geq t,$$
being~$t$ the smallest integer for which the series stabilizes.
It is worth noting that~$t$ does not necessarily coincide with the nilpotency step~$s$ of the Lie algebra~$\frg$
(see Example \ref{ejemplo8}).
Nonetheless, one always has~$t\leq s$.

We consider the following sequence of quotient Lie algebras
\begin{equation*}
\frg \longrightarrow \frg/\fra_1(J) \longrightarrow \cdots \longrightarrow \frg/\fra_q(J) \stackrel{\pi_{\small q+1}}{\longrightarrow}
\frg/\fra_{q+1}(J) \longrightarrow \cdots \longrightarrow \frg/\fra_t(J),
\end{equation*}
where~$\pi_{q+1}$ is the natural projection onto~$\frg/\fra_{q+1}(J)$, with $\ker\pi_{q+1}=\fra_{q+1}(J)/\fra_q(J)$.
For the seek of simplicity, we will denote~$\widetilde{\frg}_q=\frg/\fra_q(J)$,
for each~$1\leq q \leq t$.
Observe that the Lie algebras~$\widetilde{\frg}_q$ are nilpotent. Moreover, one has that the last term in the
sequence above satisfies~$\widetilde{\frg}_t=\{0\}$
when~$J$ is a complex structure of nilpotent type, and~$\widetilde{\frg}_t\neq\{0\}$ otherwise.

It is easy to see that the complex structure~$J$ defined on~$\mathfrak{g}$ induces in a natural way a
complex structure~$\widetilde{J}_q$ on~$\widetilde{\frg}_q$, for $1\leq q\leq t$, as follows:
$$\widetilde{J}_q(\widetilde{X}) \, \colon\!\!\!= \, \widetilde{JX},\quad\ \ \mbox{ for } \widetilde{X}\in\widetilde{\frg}_q,$$
where $\widetilde{X}$ and $\widetilde{JX}$ denote the classes of $X$ and $JX$, respectively,
in the quotient $\widetilde{\frg}_q$.
Therefore,

\smallskip

 $\bullet$ \ if $J$ is nilpotent, then $(\widetilde{\frg}_t,\widetilde{J}_t)$ is a (complex) abelian Lie algebra;

\smallskip

 $\bullet$ \ if $J$ is weakly non-nilpotent, then $(\widetilde{\frg}_t,\widetilde{J}_t)$ is an NLA endowed with a
 strongly non-nilpotent complex structure.

\medskip

\noindent Hence, in both cases, the given pair $(\frg,J)$ can be recovered as a finite sequence of complex extensions starting
from the lower dimensional pair $(\widetilde{\frg}_t,\widetilde{J}_t)$.
That is to say, quasi-nilpotent complex structures can be constructed from
lower dimensions. Hence, the essentially new complex structures that arise in each even real dimension are those of
strongly non-nilpotent type.

As we have previously said, SnN complex structures appear in dimensions $\geq 6$. In fact, in six dimensions
one has the following structural result:

\begin{theorem}\label{structural-dim6} \cite{U,UV1}
Let $\frg$ be an NLA of dimension $6$. If $\frg$ admits an SnN complex structure, then $\frg$ has ascending type
$(1,3,6)$ or $(1,3,4,6)$.
\end{theorem}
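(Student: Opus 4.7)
The plan is to pin down $m_1=\dim\frg_1=1$ first, and then use the fact that $\mathrm{ad}_{Je}$ commutes with $J$ (for $e$ spanning $\frg_1$) to analyse the higher terms of the ascending central series. The starting point is that $\fra_1(J)$ is the largest $J$-invariant subspace of $\frg_1$, so the SnN hypothesis yields $\frg_1\cap J\frg_1=\{0\}$, and hence $m_1\leq 3$. The main tool I would use throughout is that, for any $X\in\frg_1$, the integrability identity~\eqref{Nijenhuis} applied to $(X,Y)$ reduces via $[X,\,\cdot\,]=0$ to $[JX,JY]=J[JX,Y]$ for every $Y\in\frg$; equivalently, $\mathrm{ad}_{JX}$ commutes with $J$, so that $\ker(\mathrm{ad}_{JX})$ and $\mathrm{Im}(\mathrm{ad}_{JX})$ are $J$-invariant even-dimensional subspaces of $\frg$.

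The case $m_1=3$ is excluded immediately: then $\frg=\frg_1\oplus J\frg_1$ and the identity above forces all brackets to vanish, contradicting the lower bound $s\geq 3$ for SnN structures (Section~\ref{sec:restrictions}). To rule out $m_1=2$, I would fix a basis $\{X_1,X_2\}$ of $\frg_1$, complete it to $\{X_1,X_2,JX_1,JX_2,Y,JY\}$, and note that $V=\frg_1+J\frg_1$ is $4$-dimensional, $J$-invariant, and annihilated by each $\mathrm{ad}_{JX_i}$; hence $\mathrm{ad}_{JX_i}$ is determined by the single non-zero vector $Z_i=[JX_i,Y]$, with $J$-invariant image $\langle Z_i,JZ_i\rangle$. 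A careful bookkeeping of these planes together with $[Y,JY]$, subject to Jacobi and to the constraint that $\frg_1$ contains no $J$-invariant line, produces a non-trivial $J$-invariant subspace inside the center, contradicting SnN.

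Once $m_1=1$, fix $e$ spanning $\frg_1$. Since $\mathrm{Im}(\mathrm{ad}_{Je})$ is $J$-invariant and even-dimensional, it cannot be contained in the $1$-dimensional $\frg_1$; hence $Je\notin\frg_2$. The same type of reasoning applied to elements of $\frg_2$ and $\frg_3$ (using the Nijenhuis identity, which for $X\in\frg_2$ no longer reduces to a single term but still constrains brackets modulo $\frg_1$), combined with $s\geq 3$ and $s\leq 2n-1=5$, forces $m_2=3$ and rules out the ascending types $(1,2,\ldots)$, $(1,3,5,6)$, and $(1,3,4,5,6)$ by producing forbidden $J$-invariant lines in~$\frg_1$; the only survivors are $(1,3,6)$ and $(1,3,4,6)$.

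The main obstacle is the exclusion of $m_1=2$: this is not a pure dimension count and requires following the Nijenhuis and Jacobi relations in the adapted basis $\{X_1,X_2,JX_1,JX_2,Y,JY\}$ carefully enough to expose a hidden $J$-invariant line in the center. The subsequent determination of $m_2,m_3,m_4$ then follows by applying the same principle to elements successively lying in $\frg_2$ and $\frg_3$. As a shortcut, one can alternatively appeal to Salamon's classification~\cite{S} of 6-dimensional NLAs admitting complex structures (together with the refined information in~\cite{CFGU-palermo}) and inspect directly which of the algebras on that list admit SnN rather than nilpotent structures.
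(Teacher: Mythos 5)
The paper does not actually prove this statement: Theorem~\ref{structural-dim6} is imported from \cite{U,UV1}, so there is no internal proof to compare against beyond the general machinery of Section~\ref{SnN}, which would reprove it. Judged on its own, your proposal is a sound strategy outline, but it is not yet a proof: the two steps you yourself identify as the substance of the argument are left as sketches. Concretely, the exclusion of $\dim\frg_1=2$ is asserted to follow from ``careful bookkeeping'' of the planes $\langle Z_i,JZ_i\rangle$ and of $[Y,JY]$ ``exposing a hidden $J$-invariant line in the center,'' but that bookkeeping is exactly the content of the claim and is not carried out; similarly, the final paragraph asserts that the same principle ``forces $m_2=3$'' and eliminates $(1,3,5,6)$ and $(1,3,4,5,6)$ without exhibiting the argument. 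As written, a referee could not check either step.

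It is worth noting that both gaps are closed by results the paper does prove in Section~\ref{sec:restrictions}, so you are re-deriving by hand (and incompletely) what is already available. Proposition~\ref{dim-g}\,(i) with $k=1$, $n=3$ gives $1\leq\dim\frg_1\leq n-2=1$ outright, disposing of $m_1\in\{2,3\}$ in one line; then Proposition~\ref{dim-g}\,(i) with $k=2$ shows $\frg_2\cap J\frg_2\neq\{0\}$, and part (ii) yields $3=2+\dim\frg_1\leq\dim\frg_2\leq 2n-3=3$, i.e.\ $m_2=3$. The remaining types are killed by elementary facts: $\dim\frg_{s-1}\leq 2(n-1)=4$ rules out $(1,3,5,6)$, and a $5$-step $6$-dimensional NLA is filiform with ascending type $(1,2,3,4,6)$, ruling out $(1,3,4,5,6)$. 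Your observation that $\mathrm{ad}_{JX}$ commutes with $J$ for central $X$ is correct and is essentially the engine behind Lemma~\ref{lemma1_SnN}, so the approach is the right one; it just needs to be either completed or replaced by citations to Propositions~\ref{corolario_step} and~\ref{dim-g}. The alternative route via Salamon's list \cite{S} and \cite{CFGU-palermo} is legitimate but is a classification-inspection argument of a genuinely different character from the structural one the paper develops.
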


This allows us to immediately assert that an NLA $\frg$ with ascending type, for instance, $(1,4,6)$ does not
admit any complex structure. Indeed, if we assume the converse, then the complex structure~$J$ on~$\frg$ must be
SnN, as the center of~$\frg$ is $1$-dimensional and consequently~$\fra_1(J)=\{0\}$; however,
this is not possible by Theorem~\ref{structural-dim6}.

Our goal in the next sections is to find, in any even dimension, new restrictions on the ascending type $(m_{1},\ldots,m_{s})$ of
an NLA $\frg$ endowed with a strongly non-nilpotent complex structure $J$.
One of the main consequences of our study is a structural result for 8-dimensional NLAs
admitting an SnN complex structure (see Theorem~\ref{teorema-estructura-acs}), thus providing an analogous
result to Theorem~\ref{structural-dim6} in eight dimensions.

We finish this section by noting that the existence of a complex structure on an NLA is characterized in \cite{S} in terms
of the existence of a certain complex (1,0)-basis for $\frg_{\mathbb C}^*$.
However, we will not make use of this point of view here, and we
will instead develop a more constructive approach to the problem.


\section{Strongly non-nilpotent complex structures}\label{SnN}

\noindent
Let us focus our attention on those complex structures $J$ on $\frg$ satisfying $\fra_1(J)=\{0\}$.
In this case, the construction of the pairs $(\frg,J)$ cannot be based on lower dimensional classifications and other
approach is needed. In this section, we will show that these $(\frg,J)$
can be found
using the ascending central series and constructing an appropriate $J$-adapted basis of $\frg$.
In the first part of the section, we provide some general results about the ascending central series $\{\frg_k\}_k$ of those NLAs $\frg$ admitting SnN complex structures.
In particular, we see that the nilpotency step of $\frg$ must be at least~$3$.
Then, we focus on the term $\frg_1$, which corresponds to the center of $\frg$, and find an upper bound for its dimension. In the last part,
our results are applied to the study of existence of complex structures
on products of NLAs.

\subsection{Restrictions on the ascending central series $\{\frg_k\}_k$}\label{sec:restrictions}

We introduce here a collection of results showing that the ascending
central series of an NLA $\frg$ is restricted under the existence of a complex structure $J$ on $\frg$.
We also obtain several
algebraic obstructions for an NLA~$\frg$ to admit complex structures.

Let us start with some results where $J$ can be a complex structure of any type (recall Definition \ref{tipos_J}).


\begin{lemma}\label{lemma1_serie_asc}
Let $(\frg,J)$ be a $2n$-dimensional NLA  endowed with a complex structure.
Suppose that there exists a subspace $\mathcal W\subset\frg_k$, where $k>1$,
such that dim\,$\mathcal W=n$ and $\mathcal W\cap J\mathcal W=\{0\}$. It holds:
\begin{enumerate}
\item[(i)] if $\frg_{k-1}=J\frg_{k-1}$, then $\frg_k=\frg$;

\item[(ii)] if there is $X\in \mathcal W$ such that $JX\in\frg_{k+1}$ and $JX\notin\frg_{k}$, then it exists $Y\in\frg_{k-1}$ such that
$JY\in\frg_k$ and $JY\notin\frg_{k-1}$.
\end{enumerate}
\end{lemma}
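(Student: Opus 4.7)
The plan is to use the dimensional hypothesis $\dim\mathcal{W}=n$ together with $\mathcal{W}\cap J\mathcal{W}=\{0\}$ to conclude $\frg=\mathcal{W}\oplus J\mathcal{W}$, and then exploit the Nijenhuis condition~\eqref{Nijenhuis} applied to pairs of elements of $\mathcal{W}$. The key identity, obtained from $N_J(W_1,W_2)=0$, is
\begin{equation*}
[JW_1,JW_2]=[W_1,W_2]+J[JW_1,W_2]+J[W_1,JW_2],\qquad W_1,W_2\in\mathcal{W}.
\end{equation*}
Since $W_1,W_2\in\mathcal{W}\subseteq\frg_k$, each of the three bracket-arguments on the right lies in $\frg_{k-1}$; in particular $[JW_1,JW_2]\in\frg_{k-1}+J\frg_{k-1}$.

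For part (i), to obtain $\frg_k=\frg$ it is enough to show $J\mathcal{W}\subseteq\frg_k$, and for this I will check that $[JW',U]\in\frg_{k-1}$ for every $W'\in\mathcal{W}$ and every $U\in\frg$. Decomposing $U=W_1+JW_2$ with $W_1,W_2\in\mathcal{W}$, the bracket splits as $[JW',U]=[JW',W_1]+[JW',JW_2]$; the first summand is in $\frg_{k-1}$ because $W_1\in\frg_k$, and the key identity expresses the second as a sum of an element of $\frg_{k-1}$ and two elements of $J\frg_{k-1}$. The hypothesis $\frg_{k-1}=J\frg_{k-1}$ then collapses everything into $\frg_{k-1}$, giving $JW'\in\frg_k$.

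For part (ii), I would argue by contradiction, assuming the negation of the conclusion, namely $J\frg_{k-1}\cap\frg_k\subseteq\frg_{k-1}$. Using $X$ in place of $W'$ in exactly the same decomposition, the Nijenhuis expansion yields
\begin{equation*}
[JX,U]=P+JQ,\qquad P=[JX,W_1]+[X,W_2],\quad Q=[JX,W_2]+[X,JW_2],
\end{equation*}
with $P,Q\in\frg_{k-1}$ (all four brackets have a factor in $\mathcal{W}\subseteq\frg_k$). Because $JX\in\frg_{k+1}$, the left-hand side lies in $\frg_k$, so $JQ=[JX,U]-P\in\frg_k$. The contradiction hypothesis then forces $JQ\in\frg_{k-1}$, hence $[JX,U]\in\frg_{k-1}$; since $U$ was arbitrary this gives $JX\in\frg_k$, against the assumption $JX\notin\frg_k$.

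The step I expect to be the main obstacle is isolating the correct element $Q\in\frg_{k-1}$ in part (ii). A priori one only knows $[JX,U]\in\frg_k\cap(\frg_{k-1}+J\frg_{k-1})$, which does not by itself single out a canonical $Q\in\frg_{k-1}$ with $JQ\in\frg_k$; the Nijenhuis identity is precisely what produces such a $Q$ automatically, and once this ansatz is found both parts follow from the same computation.
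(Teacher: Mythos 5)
Your proof is correct and follows essentially the same route as the paper: both parts rest on the decomposition $\frg=\mathcal W\oplus J\mathcal W$ and the Nijenhuis identity $[JW_1,JW_2]=[W_1,W_2]+J[JW_1,W_2]+J[W_1,JW_2]$, which places $[JW_1,JW_2]$ in $\frg_{k-1}+J\frg_{k-1}$. The only cosmetic difference is in (ii), where the paper directly constructs the witness $Y=[JX,Z]+[X,JZ]$ from a specific $V$ with $[JX,V]\notin\frg_{k-1}$, while you run the same computation contrapositively to show that the absence of any such $Y$ would force $JX\in\frg_k$.
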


\begin{proof}
Let $\{X_i\}_{i=1}^n$ be a basis for $\mathcal W$.  Observe that $X_i\notin J\mathcal W$, for each
$1\leq i \leq n$, and $\{X_i, JX_i\}_{i=1}^n$ gives a basis of $\frg$.
First, let us note that $X_i\in\frg_k$, for every $1\leq i \leq n$, implies that
$$[X_i,X_j], \ [X_i, JX_j] \in \frg_{k-1}, \ \forall\, j=1,\ldots,n.$$
For the brackets $[JX_i,JX_j]$, considering the  Nijenhuis condition~\eqref{Nijenhuis} and the hypothesis in~\textrm{(i)}, we obtain
$$[JX_i,JX_j]=[X_i,X_j]+J[JX_i,X_j]+J[X_i,JX_j] \ \in \ \frg_{k-1}+J\frg_{k-1}=\frg_{k-1},$$
for all $1\leq i,j \leq n$. Thus, we can easily conclude that $JX_1,\ldots,JX_n\in\frg_k$ and $\frg_k=\frg$.

\smallskip
For the second part, let us suppose that there exists $X\in \mathcal W\subset\frg_k$ such that~$JX\in\frg_{k+1}$ and~$JX\notin\frg_{k}$.
Then, it is possible to find~$V\in \frg$ such that~$0\neq [JX,V]\in\frg_k$ but~$[JX, V]\notin\frg_{k-1}$. As vector spaces, $\frg=\mathcal W\oplus J\mathcal W$ so we can express $V=V_1+V_2$, where
$V_1\in\mathcal W$ and $V_2\in J\mathcal W$. Hence,
$[JX, V] = [JX, V_1] + [JX, V_2]$, with $[JX,V_1]\in\frg_{k-1}$, and
consequently $[JX,V_2]$ must verify the same property as~$V$.
Writing~$V_2=JZ$, with~$Z\in\mathcal W$, one has that
$0\neq [JX,JZ]\in\frg_k$ but $[JX, JZ]\notin\frg_{k-1}$.
Applying the Nijenhuis condition, we get
$$[JX,JZ]-[X,Z]=J\big([JX,Z]+[X,JZ]\big).$$
Observe that the left-hand side belongs to $\frg_k$, but it is not contained in $\frg_{k-1}$.
Hence, the element $0\neq Y=[JX,Z]+[X,JZ]\in\frg_{k-1}$ and $JY\in\frg_k$, but it does not belong to $\frg_{k-1}$.
\end{proof}


\begin{lemma}\label{lemma2_serie_asc}
Let $(\frg,J)$ be a $2n$-dimensional NLA  endowed with a complex structure. Suppose there exist a $2$-dimensional $J$-invariant subspace $\mathcal V\subset\frg$ and a non-zero element $X\in\frg_{1}$
such that $\frg=\frg_{k}\oplus \mathcal V \oplus \langle JX\rangle$ as vector spaces, for some $k\geq 1$. Then, $JX\in\frg_{k+1}$.
\end{lemma}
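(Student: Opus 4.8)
The plan is to prove the equivalent statement $[JX,\frg]\subseteq\frg_k$. First I would reduce the problem: writing an arbitrary element of $\frg$ according to the splitting $\frg=\frg_k\oplus\mathcal V\oplus\langle JX\rangle$ and using that $[JX,\frg_k]\subseteq\frg_{k-1}\subseteq\frg_k$ together with $[JX,JX]=0$, the claim reduces to showing $[JX,\mathcal V]\subseteq\frg_k$. Since $\mathcal V$ is $J$-invariant it suffices to fix a basis $\{V,JV\}$ of $\mathcal V$ and prove $[JX,V]\in\frg_k$, the case of $JV$ being symmetric.

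The second step exploits that $X$ lies in the center. Because $[X,\cdot]=0$, the Nijenhuis identity $N_J(X,Y)=0$ collapses to the commutation relation $[JX,JY]=J[JX,Y]$ for every $Y\in\frg$; that is, $\mathrm{ad}_{JX}$ is complex linear, so its image $\mathrm{ad}_{JX}(\frg)$ is $J$-invariant. Moreover, as $\frg_k$ is an ideal, $\mathrm{ad}_{JX}$ descends to the quotient $\bar\frg:=\frg/\frg_k$, which is a $3$-dimensional nilpotent Lie algebra with basis $\{\bar V,\overline{JV},\overline{JX}\}$. Every $3$-dimensional nilpotent Lie algebra satisfies $[\bar\frg,\bar\frg]\subseteq Z(\bar\frg)$, so the brackets $[\overline{JX},\bar V]$ and $[\overline{JX},\overline{JV}]$ lie in the center of $\bar\frg$; equivalently $[JX,V],[JX,JV]\in\frg_{k+1}$, and the induced map $\bar D=\mathrm{ad}_{\overline{JX}}$ satisfies $\bar D^{2}=0$.

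I would then split into the two possibilities for $\bar\frg$. If $\bar\frg$ is abelian then $\bar D=0$, i.e. $[JX,V],[JX,JV]\in\frg_k$, and we are done. The remaining case, in which $\bar\frg$ is the Heisenberg algebra and $\overline{JX}$ is not central, is the one to exclude. There $Z(\bar\frg)$ is one-dimensional and contains the classes of $A:=[JX,V]$ and of $JA=[JX,JV]$; hence $\overline{JA}=\lambda\bar A$ for some $\lambda\in\R$, so $JA-\lambda A=:g\in\frg_k$. Applying $J$ and using $J^{2}=-\mathrm{Id}$ gives $(1+\lambda^{2})A=-(\lambda g+Jg)$, whence $A\in\langle g,Jg\rangle$ with $0\neq g\in\frg_k$ and, since $\bar A\neq0$ (one may assume this after possibly interchanging $V$ and $JV$), with $Jg\notin\frg_k$ although $Jg\in\frg_{k+1}$.

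The crux — and what I expect to be the main obstacle — is to turn this configuration into a contradiction. The element $g$ lies simultaneously in $\frg_k$ and in the $J$-invariant plane $\langle A,JA\rangle=\mathrm{ad}_{JX}(\mathcal V)\subseteq\mathrm{ad}_{JX}(\frg)$, so $g$ is itself a value $[JX,Y_0]$; decomposing $Y_0$ along the splitting and using $[JX,\frg_k]\subseteq\frg_{k-1}$ together with the centrality of $X$, one is led to conclude that $g$ (or a closely related element) must drop one level in the ascending series, i.e. $g\in\frg_{k-1}$. Iterating this mechanism, with $\mathrm{ad}_{JX}$ commuting with $J$ and nilpotent (so that $\mathrm{ad}_{JX}^{m}g=0$ for some $m$), should push $g$ down through $\frg_{k-1},\frg_{k-2},\dots$ while keeping $Jg\notin\frg_k$, eventually forcing $g\in\frg_0=\{0\}$ and contradicting $g\neq0$. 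Making this descent precise — in particular controlling how the level of $Jg$ changes at each step so that the iteration genuinely terminates — is the delicate point; once the Heisenberg case is excluded, the abelian case yields $[JX,\mathcal V]\subseteq\frg_k$ and therefore $JX\in\frg_{k+1}$.
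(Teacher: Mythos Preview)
Your reduction is clean and the idea of passing to the $3$-dimensional nilpotent quotient $\bar\frg=\frg/\frg_k$ is attractive: the abelian case and the Heisenberg case with $\overline{JX}$ central are handled correctly. The gap is exactly where you flag it, and it is a real one. Your ``descent'' step asserts that writing $g=[JX,Y_0]$ and ``decomposing $Y_0$ along the splitting'' forces $g\in\frg_{k-1}$; but $Y_0=JV-\lambda V$ lies \emph{entirely} in $\mathcal V$, so it has no $\frg_k$-component at all, and the inclusion $[JX,\frg_k]\subseteq\frg_{k-1}$ gives you nothing. Iterating $\mathrm{ad}_{JX}$ sends $g\mapsto[JX,g]\in\frg_{k-1}$, but that only pushes $\mathrm{ad}_{JX}^m(g)$ down, not $g$ itself; there is no mechanism here that lowers the level of $g$ while keeping $Jg\notin\frg_k$. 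As written, the Heisenberg case with $\overline{JX}$ non-central is not excluded, and in that configuration one only gets $JX\in\frg_{k+2}$, which is not enough.

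The paper's argument avoids this by working with a concrete element rather than trying to rule out a case. One picks any $Z\in\frg_{k+1}\setminus\frg_k$ and, after subtracting its $\frg_k$-part, writes $Z=aV+bJV+cJX$. If $a=b=0$ one is done; otherwise a rotation in $\mathcal V$ gives $Z=J(V'+cX)$, whence $[Z,JX]=J[V',JX]\in\frg_k$ by the same Nijenhuis identity you use. The decisive extra ingredient is a $J$-adapted basis in which $\frg_k=\langle X\rangle\oplus W$ with $W=\frg_k\cap J\frg_k$ $J$-invariant: then $J[V',JX]\in\frg_k$ forces $[V',JX]\in J\frg_k=\langle JX\rangle\oplus W$, so in $\bar\frg$ one has $\overline{[V',JX]}\in\langle\overline{JX}\rangle$, i.e.\ $\overline{JX}$ is an eigenvector of $\overline{\mathrm{ad}_{V'}}$. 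Nilpotency of $\bar\frg$ then kills the eigenvalue, giving $[V',JX]\in W\subset\frg_k$ and $[JV',JX]=J[V',JX]\in\frg_k$ directly --- no iteration is needed. If you want to salvage your quotient approach, this eigenvector-plus-nilpotency step (applied to $\mathrm{ad}_{V'}$, not to $\mathrm{ad}_{JX}$) is the missing piece.
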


\begin{proof}
Let $\{X_i,JX_i\}_{i=1}^n$ be a $J$-adapted basis of $\frg$.
Then, any non-zero element $W\in\frg$ can be written as $W=\sum_{i=1}^n a_i\,X_i+b_i\,JX_i$,
where $a_k\neq 0$ for some $1\leq k\leq n$ (indeed, if $a_k=0$ for every $k$, then $b_k\neq 0$ for some $k$ and it suffices
to interchange the roles of $X_k$ and $JX_k$ in the $J$-adapted basis).

We consider $X\in\frg_1$ and $\mathcal V=\langle Y,JY\rangle$. We have
$$X=\sum_{i=1}^n a_i\,X_i+b_i\,JX_i,
    \quad
    Y=\sum_{i=1}^n \tilde a_i\,X_i+\tilde b_i\,JX_i, \ \text{ with } a_r,\,\tilde a_s\neq 0 \text{ for some } r\neq s,$$
since $X, Y$ are non-zero linearly independent elements.
The following arrangement
can be made
$$X'_1=X, \qquad X'_r=X_1, \qquad X'_s=X_n, \qquad X'_n=Y, \qquad X'_i=X_i, \text{ for } i\neq 1,k,s,n,$$
in such a way that $\{X'_i,JX'_i\}$ is a new $J$-adapted basis of $\frg$.
Renaming $X'_k\equiv X_k$ for $1\leq k\leq n$, we have
that $X_{1}=X$ and $\mathcal V=\langle X_{n},JX_{n}\rangle$. Hence, it is clear that
$$\frg_{k}=\langle X_{1},\ldots,X_{n-1},JX_{2},\ldots,JX_{n-1}\rangle.$$

Since the algebra is nilpotent and $\frg_k\neq \frg$, there exists $0\neq Z\in\frg_{k+1}$ such that $Z\notin\frg_k$.  We can consider, without loss of generality, that $$Z=a\,X_n + b\,JX_n + c\,JX_1,$$ where $a, b, c\in \mathbb R$
(otherwise, apply a similar argument to that contained in the proof of Lemma~\ref{lemma1_serie_asc}).

If $a=b=0$, then $c\neq 0$ and the result holds immediately.

If $a$ or $b$ are non-zero, then we can define a new basis for $\mathcal V$ as follows:
$$X_n' = b\,X_n - a\,JX_n,\quad JX_n' = b\,JX_n + a\,X_n.$$
In order to prove that $JX_1\in\frg_{k+1}$ it suffices to see that $[X_n', JX_1], [JX_n', JX_1]\in \frg_k$.

On the one hand, it is clear that $[Z, JX_1]\in \frg_k$, i.e.
$$[Z, JX_1] = a_{1}\,X_{1}+\cdots+a_{n-1}\,X_{n-1}+b_{2}\,JX_{2}+\cdots +b_{n-1}\,JX_{n-1},$$ where $a_{i},b_{i}\in\mathbb R$.
On the other hand, note that we can write $Z = J(X_n' + c\,X_1)\in \frg_{k+1}$, so using the Nijenhuis condition we obtain
$$[Z, JX_1] = J[c\,X_1 + X_n', JX_1] = J[X_n', JX_1].$$
Therefore,
$$J[X_n', JX_1] = a_{1}\,X_{1}+\cdots+a_{n-1}\,X_{n-1}+b_{2}\,JX_{2}+\cdots +b_{n-1}\,JX_{n-1},$$
and applying $J$ to the previous expression, we have:
$$-[X_n', JX_1] = a_{1}\,JX_{1}+ a_2\,JX_2+\cdots+a_{n-1}\,JX_{n-1}-(b_{2}\,X_{2}+\cdots +b_{n-1}\,X_{n-1}).$$  Due to the nilpotency of $\frg$, we get $a_1 = 0$ and thus $[X_n', JX_1]\in \frg_k$.

Finally, since $[JX_n', JX_1] = J [X_n', JX_1]$, we can also ensure that $[JX_n', JX_1]\in\frg_k$.
\end{proof}

\begin{remark}
In the conditions of Lemma~\ref{lemma2_serie_asc}, we notice that if $\frg_{k+1}\cap \mathcal V\neq \{0\}$, then $\frg_{k+1}=\frg$ whereas if $\frg_{k+1}\cap \mathcal V= \{0\}$, then $\frg_{k+1} = \frg_{k}\oplus \langle JX \rangle$ and $\frg_{k+2}=\frg$.
\end{remark}

In the case of SnN complex structures, the ascending $J$-compatible series~$\{a_k(J)\}_k$ is identically zero.
For this reason, in order to study this type of structures we will focus on the way~$J$ interacts
with the terms~$\frg_k$ of the ascending central series of $\frg$.


\begin{lemma}\label{lemma1_SnN}
Let $(\frg,J)$ be a $2n$-dimensional NLA  endowed with a complex structure,
and assume that
$\frg_k\cap J\frg_k=\{0\}$, for some integer $k\geq 1$. Then one has:
\begin{itemize}
\item[\textrm{(i)}] $\frg_{k+1}\cap J\frg_k=\{0\}$;
\item[\textrm{(ii)}] if $r>1$ is the smallest integer such that $\frg_{k+r}\cap J\frg_k\neq \{0\}$, then
$\frg_{k+r-1}\cap J\frg_{k+r-1}\neq \{0\}.$
\end{itemize}
\end{lemma}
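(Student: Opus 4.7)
The plan is to prove both parts by contradiction, using the Nijenhuis condition~\eqref{Nijenhuis} to turn information about brackets of the form $[JY, JX]$ into information about $[JY, X]$ whenever $Y \in \frg_k$. The hypothesis $\frg_k \cap J\frg_k = \{0\}$ is reserved for the very end, where it collapses a direct-sum decomposition and forces $JY$ to vanish.

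For part (i), I would take a hypothetical nonzero $V = JY \in \frg_{k+1} \cap J\frg_k$ with $Y \in \frg_k$. The memberships $Y \in \frg_k$ and $JY \in \frg_{k+1}$ give $[Y, X], [Y, JX] \in \frg_{k-1}$ and $[JY, X] \in \frg_k$ for every $X \in \frg$. Substituting into the rewritten integrability identity
$$[JY, JX] = [Y, X] + J[JY, X] + J[Y, JX]$$
places the right-hand side in $\frg_{k-1} + J\frg_k$, so $[JY, W] \in \frg_{k-1} + J\frg_k$ for every $W \in \frg$. Intersecting with the a priori containment $[JY, W] \in \frg_k$ and decomposing as $u + Jv$ with $u \in \frg_{k-1} \subseteq \frg_k$ and $v \in \frg_k$, one finds $Jv \in \frg_k \cap J\frg_k = \{0\}$, hence $[JY, W] \in \frg_{k-1}$. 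This means $JY \in \frg_k$, so $JY \in \frg_k \cap J\frg_k = \{0\}$, contradicting $V \neq 0$.

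Part (ii) follows the same scheme, now with $JY \in \frg_{k+r}$ in place of $\frg_{k+1}$, so that $[JY, X] \in \frg_{k+r-1}$. The Nijenhuis computation then produces $[JY, W] \in \frg_{k-1} + J\frg_{k+r-1}$, and the decomposition step changes character: writing $[JY, W] = u + Jv$ with $u \in \frg_{k-1} \subseteq \frg_{k+r-1}$ and $v \in \frg_{k+r-1}$, the element $Jv = [JY, W] - u$ lies in $\frg_{k+r-1} \cap J\frg_{k+r-1}$. If this intersection vanishes, then $Jv = 0$ and, as in (i), $JY \in \frg_k \cap J\frg_k = \{0\}$, contradicting $V \neq 0$; otherwise the desired conclusion $\frg_{k+r-1} \cap J\frg_{k+r-1} \neq \{0\}$ already holds.

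The only genuinely nontrivial step is the intersection calculation in (i)/(ii), which is where the incompatibility $\frg_k \cap J\frg_k = \{0\}$ actually does work; everywhere else the argument is a direct substitution into Nijenhuis. Note that the minimality of $r$ in~(ii) is not used in this proof: the same argument shows $\frg_{k+r-1} \cap J\frg_{k+r-1} \neq \{0\}$ for any $r \geq 2$ with $\frg_{k+r} \cap J\frg_k \neq \{0\}$, while the $r = 1$ specialization is precisely the content of part~(i).
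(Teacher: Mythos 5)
Your proof is correct, and at its core it is the same computation as the paper's: both hinge on rewriting the Nijenhuis condition as $[JY,JX]=[Y,X]+J\bigl([JY,X]+[Y,JX]\bigr)$ and observing that $J\bigl([JY,X]+[Y,JX]\bigr)$ is forced into $\frg_k\cap J\frg_k$ (resp.\ $\frg_{k+r-1}\cap J\frg_{k+r-1}$). The difference is in the endgame. The paper chooses $X$ (your $Y$) and then a specific $W$ so that $[JY,W]\notin\frg_{k-1}$, which makes the element $[JY,W']+[Y,JW']$ itself a \emph{nonzero} member of the intersection, yielding the contradiction in one step; this choice is exactly where the minimality of $r$ enters in part (ii), since it guarantees $JY\notin\frg_{k+r-1}$. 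You instead let $W$ range over all of $\frg$, conclude that the bracket combination vanishes identically, deduce $[JY,\frg]\subseteq\frg_{k-1}$ and hence $JY\in\frg_k$, and finally place $JY$ itself in $\frg_k\cap J\frg_k$. Your version is slightly more uniform: it avoids the existence argument for a good $W$ and, as you correctly note, does not need $r$ to be minimal (the general statement also follows from the paper's version by passing to the minimal $r'\leq r$ and using $\frg_{k+r'-1}\subseteq\frg_{k+r-1}$, so nothing essential is gained, but the observation is accurate). No gaps.
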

\begin{proof}
For case (i) we argue by contradiction.  Suppose that there exists $X\in\frg_k$ such that $JX\in\frg_{k+1}$ and $JX\notin\frg_{k}$. Then, it is possible to find $Y\in\frg$ satisfying $0\neq [JX,Y]\in\frg_{k}$ but not contained
in the subspace $\frg_{k-1}$.
Let us denote by $T$ the non-zero element given by
$T=[JX,Y]+[X,JY]\in\frg_{k}$.
By the Nijenhuis condition, we have
$$JT=J\big([JX,Y]+[X,JY]\big)=[JX,JY]-[X,Y]\in\frg_k.$$
Therefore, both the element $T$ and its image by $J$ belong to $\frg_{k}$, which contradicts the hypothesis~$\frg_k\cap~\!J\frg_k=\{0\}$.

For case (ii), take an element $X\in\frg_k$ such that $JX\in\frg_{k+r}$ and $JX\notin\frg_{k+r-1}$.  Following the same argument as before, we can find an element $Y\in\frg$ satisfying $0\neq [JX,Y]\in\frg_{k+r-1}$ but not contained in the subspace $\frg_{k+r-2}$.  Now, the non-zero elements
$T$ and $JT$ belong to $\frg_{k+r-1}$, being $T=[JX,Y]+[X,JY]$.
\end{proof}

%

Some interesting consequences arise from this result:

\begin{proposition}\label{corolario_step}
Let $(\frg,J)$ be a $2n$-dimensional NLA  endowed with a complex structure and suppose that $\frg_k\cap J\frg_k=\{0\}$,
for some integer $k\geq 1$.  Then:
\begin{itemize}
\item[\textrm{(i)}] the nilpotency step of $\frg$ satisfies $s\geq k+2$; moreover,
if $s=k+2$ then $\frg_{k+1}\cap J\frg_{k+1}\neq\{0\}$;
\item[\textrm{(ii)}] $\frg_{k+1}$ cannot contain a subspace $\mathcal W$ satisfying dim\,$\mathcal W=n$ and $\mathcal W\cap J\mathcal W=\{0\}$.
\end{itemize}
\end{proposition}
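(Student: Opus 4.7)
The overall strategy combines Lemma~\ref{lemma1_SnN} with the adapted-basis argument of Lemma~\ref{lemma1_serie_asc} and a dimension count against the ascending central series; all contradictions will be set up against the standing hypothesis $\frg_k\cap J\frg_k=\{0\}$.

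For part~\textrm{(i)}, I would first note that $\frg_k\neq\{0\}$, since the center of a nontrivial nilpotent Lie algebra is nontrivial and $\frg_1\subseteq\frg_k$ for $k\geq 1$. If one had $s\leq k+1$, i.e.\ $\frg_{k+1}=\frg$, then $J\frg_k\subseteq\frg_{k+1}$; but Lemma~\ref{lemma1_SnN}\textrm{(i)} says $\frg_{k+1}\cap J\frg_k=\{0\}$, forcing $J\frg_k=\{0\}$ and hence $\frg_k=\{0\}$, a contradiction. So $s\geq k+2$. For the addendum, assume $s=k+2$. Then $J\frg_k\subseteq\frg_{k+2}=\frg$ is nonzero, so the minimal $r>1$ with $\frg_{k+r}\cap J\frg_k\neq\{0\}$ equals $2$, and Lemma~\ref{lemma1_SnN}\textrm{(ii)} immediately yields $\frg_{k+1}\cap J\frg_{k+1}\neq\{0\}$.

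For part~\textrm{(ii)}, I would argue by contradiction. Suppose $\mathcal W\subseteq\frg_{k+1}$ has $\dim\mathcal W=n$ and $\mathcal W\cap J\mathcal W=\{0\}$, so that $\frg=\mathcal W\oplus J\mathcal W$. Split into two cases depending on whether some $X\in\mathcal W$ satisfies $JX\in\frg_{k+2}\setminus\frg_{k+1}$. In the first case, applying Lemma~\ref{lemma1_serie_asc}\textrm{(ii)} with $k+1$ in place of $k$ produces $Y\in\frg_k$ with $JY\in\frg_{k+1}\setminus\frg_k$; but since $Y\in\frg_k$, $JY$ lies in $\frg_{k+1}\cap J\frg_k=\{0\}$ by Lemma~\ref{lemma1_SnN}\textrm{(i)}, contradicting $JY\neq 0$.

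The remaining case is the crux and I expect it to be the main obstacle: no such $X$ exists, so $\mathcal K:=J\mathcal W\cap\frg_{k+2}$ coincides with $J\mathcal W\cap\frg_{k+1}$; set $d:=\dim\mathcal K$. If $d=n$ then $J\mathcal W\subseteq\frg_{k+1}$, so $\frg_{k+1}=\mathcal W+J\mathcal W=\frg$, contradicting part~\textrm{(i)}. If $d<n$, choose a complement $\mathcal W'$ of $\mathcal K$ inside $J\mathcal W$ of dimension $n-d$; since $\mathcal W'\cap J\mathcal W\cap\frg_{k+2}=\mathcal W'\cap\mathcal K=\{0\}$, one gets $\mathcal W'\cap\frg_{k+2}=\{0\}$, whence $\dim\frg_{k+2}\leq 2n-(n-d)=n+d$. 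On the other hand $\mathcal W+\mathcal K\subseteq\frg_{k+1}\subseteq\frg_{k+2}$ is already a direct sum of dimension $n+d$, so $\dim\frg_{k+1}=\dim\frg_{k+2}=n+d$, forcing $\frg_{k+1}=\frg_{k+2}$. By nilpotency this means $\frg_{k+1}=\frg$, once again contradicting part~\textrm{(i)}. The delicate step is recognising that failure of the Case~A hypothesis precisely traps $J\mathcal W$ between $\frg_{k+1}$ and $\frg_{k+2}$, and then converting this into the clean dimension count above.
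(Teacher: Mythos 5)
Your proof is correct, and it rests on exactly the same two pillars as the paper's: part~\textrm{(i)} is Lemma~\ref{lemma1_SnN}\,\textrm{(i)} (to get $\frg_{k+1}\neq\frg$) followed by Lemma~\ref{lemma1_SnN}\,\textrm{(ii)} with $r=2$, and part~\textrm{(ii)} is Lemma~\ref{lemma1_serie_asc}\,\textrm{(ii)} played against Lemma~\ref{lemma1_SnN}\,\textrm{(i)}. The only divergence is in how part~\textrm{(ii)} is organised: what you call the crux (Case~B) is actually vacuous, and the paper dispenses with it in one line. Since $\mathcal W\subseteq\frg_{k+1}$ and $\frg=\mathcal W\oplus J\mathcal W$, one has $\frg=\frg_{k+1}\oplus\mathcal U$ for some $\mathcal U\subseteq J\mathcal W$; picking $Z\in\frg_{k+2}\setminus\frg_{k+1}$ (which exists by nilpotency and part~\textrm{(i)}) and writing $Z=Z_1+JX$ with $Z_1\in\frg_{k+1}$, $JX\in\mathcal U$, immediately produces $X\in\mathcal W$ with $JX\in\frg_{k+2}\setminus\frg_{k+1}$, i.e.\ your Case~A always holds. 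Your dimension count for Case~B ($\frg_{k+1}=\frg_{k+2}$ forces $\frg_{k+1}=\frg$) is correct and reaches the same contradiction, so nothing is wrong --- it is simply a longer road to rule out a case that the direct decomposition of $Z$ shows cannot occur.
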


\begin{proof}
For part \textrm{(i)}, it suffices to apply Lemma \ref{lemma1_SnN} \textrm{(i)} in order to see that $\frg_{k+1}\neq\frg$.
Hence, $s\geq k+2$.
Furthermore, if $s=k+2$,
then one has $\frg_{k+2}=\frg$ and also $\frg_{k+1}\cap J\frg_{k}=\{0\}$ by Lemma \ref{lemma1_SnN} \textrm{(i)}.
To obtain our result, it suffices to make use of Lemma~\ref{lemma1_SnN}~\textrm{(ii)} with~$r=2$.

For  \textrm{(ii)}, let us assume the opposite, i.e., suppose that there exists $\mathcal W\subset\frg_{k+1}$
such that dim\,$\mathcal W=n$ and $\mathcal W\cap J\mathcal W=\{0\}$.
By part \textrm{(i)} it is clear that $\frg_{k+1}\neq\frg$.
Necessarily, there is $Z\in\frg_{k+2}\subseteq\frg$
 and not belonging to $\frg_{k+1}$.
As vector spaces, $\frg=\frg_{k+1}\oplus \mathcal U$, where $\mathcal U$ is a certain
subspace of $J\mathcal W$, so we can write $Z=Z_1+JX$, being $Z_1\in\frg_{k+1}$ and $0\neq X\in\mathcal W$.
Moreover, since $Z\notin\frg_{k+1}$, then $JX\in\frg_{k+2}$ but $JX\notin\frg_{k+1}$.
Applying Lemma~\ref{lemma1_serie_asc}~\textrm{(ii)} for $r=k+1$,
it would be possible to find a non-zero element $Y\in\frg_k$ such that $JY\in\frg_{k+1}$.
However, this contradicts Lemma \ref{lemma1_SnN}~\textrm{(i)}.
\end{proof}

Thanks to the previous proposition with $k=1$, one obtains the following result:

\begin{corollary}\label{corolario_smayor3}
Any nilpotent Lie algebra endowed with a strongly non-nilpotent complex structure is at least 3-step.
\end{corollary}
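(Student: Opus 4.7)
The statement is essentially a direct corollary of Proposition \ref{corolario_step}, so my plan is very short: reduce the SnN hypothesis to the condition $\frg_1 \cap J\frg_1 = \{0\}$ and then invoke that proposition with $k=1$.

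First I would unpack the definition of $\fra_1(J)$ applied to $\fra_0(J)=\{0\}$. The recursion
$\fra_1(J) = \{X\in\frg \mid [X,\frg] \subseteq \{0\} \text{ and } [JX,\frg]\subseteq \{0\}\}$
says exactly that $X \in \frg_1$ and $JX \in \frg_1$. Since $J^2 = -\mathrm{Id}$, the set $\{X\in \frg_1 \mid JX\in\frg_1\}$ coincides with $\frg_1 \cap J\frg_1$. Thus the SnN hypothesis $\fra_1(J) = \{0\}$ is equivalent to $\frg_1 \cap J\frg_1 = \{0\}$, which matches the hypothesis of Proposition~\ref{corolario_step} with $k=1$.

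Next I would simply apply part (i) of that proposition: if $\frg_k \cap J\frg_k = \{0\}$ then the nilpotency step $s$ satisfies $s \geq k+2$. With $k=1$ this gives $s \geq 3$, which is exactly the conclusion. There is no real obstacle here since the work has been done in the preceding lemmas; the only care needed is the preliminary observation that $\fra_1(J)$ is literally the intersection $\frg_1 \cap J\frg_1$, after which the corollary is immediate. One might also mention for completeness that the bound is sharp, as illustrated by the $6$-dimensional examples in Theorem~\ref{structural-dim6} where SnN structures appear on Lie algebras with ascending type $(1,3,6)$, already $3$-step.
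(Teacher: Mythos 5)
Your proposal is correct and is essentially identical to the paper's argument: the paper derives this corollary precisely by applying Proposition~\ref{corolario_step}\,(i) with $k=1$, and your preliminary identification $\fra_1(J)=\frg_1\cap J\frg_1$ is exactly the (implicit) reduction the paper relies on. Nothing is missing.
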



We next give some bounds for the dimensions of the terms of the ascending central series of $\frg$ under the presence of a strongly non-nilpotent complex structure.

\begin{proposition}\label{dim-g}
Let $(\frg,J)$ be a $2n$-dimensional NLA  endowed with a complex structure satisfying $\frg_k\cap J\frg_k=~\{0\}$ for some $k\geq 1$.  Then:
\begin{itemize}
\item[(i)] $k\leq \dim \frg_k\leq n-2$;
\item[(ii)] $1+\dim \frg_{k}\leq \dim \frg_{k+1}\leq 2n-3$;
furthermore, if in addition $\frg_{k+1}\cap J\frg_{k+1}\neq\{0\}$, then one indeed has $2+\dim \frg_{k}\leq \dim \frg_{k+1}\leq 2n-3$;
\item[(iii)] if $\dim \frg_k=n-r$, for some $r\geq 1$, then $\frg_{k+r-1}\cap J\frg_{k+r-1}\neq\{0\}$;
moreover, $n\leq \dim \frg_{k+r-1}$.
\end{itemize}
\end{proposition}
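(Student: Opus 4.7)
For part (i), the lower bound $k\leq\dim\frg_k$ is immediate: the hypothesis $\frg_k\cap J\frg_k=\{0\}$ forces $\dim\frg_k\leq n<2n$, so $\frg_k\subsetneq\frg$, and the ascending central series of the nilpotent Lie algebra $\frg$ strictly grows up to $\frg_k$. For the upper bound $\dim\frg_k\leq n-2$, I plan to rule out $\dim\frg_k\in\{n-1,n\}$ via Proposition~\ref{corolario_step}(ii), which forbids $\frg_{k+1}$ from containing any $n$-dimensional subspace $\mathcal{W}$ with $\mathcal{W}\cap J\mathcal{W}=\{0\}$. When $\dim\frg_k=n$, I take $\mathcal{W}=\frg_k$ directly. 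When $\dim\frg_k=n-1$, I extend $\frg_k$ by a vector $v\in\frg_{k+1}\setminus(\frg_k+J\frg_k)$; such a $v$ exists because $\frg_{k+1}\subseteq\frg_k+J\frg_k$ would combine with Lemma~\ref{lemma1_SnN}(i) and inclusion-exclusion to force $\dim\frg_{k+1}\leq n-1$, contradicting strict increase. Since $J$ preserves $\frg_k+J\frg_k$, it induces an endomorphism $\bar J$ on the $2$-dimensional quotient $\frg/(\frg_k+J\frg_k)$ with $\bar J^2=-\mathrm{Id}$; from this it is a short check that $\mathcal{W}:=\frg_k+\langle v\rangle$ has $\mathcal{W}\cap J\mathcal{W}=\{0\}$, yielding the desired contradiction.

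For part (ii), the first lower bound is again strict increase. Under the extra hypothesis $\frg_{k+1}\cap J\frg_{k+1}\neq\{0\}$, picking a nonzero $Z$ in that intersection yields $Z\notin\frg_k+J\frg_k$ (writing $Z=a+Jb$ with $a,b\in\frg_k$ would force $Jb\in\frg_{k+1}\cap J\frg_k=\{0\}$ by Lemma~\ref{lemma1_SnN}(i)); solving $\alpha Z+\beta JZ=w\in\frg_k$ for $Z$ then produces $(\alpha^2+\beta^2)Z\in\frg_k+J\frg_k$, showing that $\{Z,JZ\}$ is linearly independent modulo $\frg_k$ and hence $\dim\frg_{k+1}\geq\dim\frg_k+2$. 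The upper bound $\dim\frg_{k+1}\leq 2n-3$ is the hardest step: I plan to rule out $\dim\frg_{k+1}\in\{2n-2,2n-1,2n\}$. The value $2n$ is excluded by Proposition~\ref{corolario_step}(i); in the remaining two cases, $\frg_{k+1}$ cannot be $J$-invariant (else $J\frg_k\subseteq\frg_{k+1}$ would contradict Lemma~\ref{lemma1_SnN}(i), as $\frg_k\neq\{0\}$). Setting $W:=\frg_{k+1}\cap J\frg_{k+1}$, the parity of $\dim W$ combined with the inclusion-exclusion lower bound forces $\dim W=2n-2$ or $\dim W=2n-4$, respectively. After choosing a complement $V_R$ of $W$ in $\frg_{k+1}$, I will verify $V_R\cap JV_R=\{0\}$ and the direct sum decomposition $\frg=W\oplus V_R\oplus JV_R$ (from elementary dimension counts), and then combine $V_R$ with a maximal subspace $U_W\subset W$ satisfying $U_W\cap JU_W=\{0\}$ (of real dimension $\tfrac12\dim W$) to produce an $n$-dimensional $U:=V_R+U_W\subset\frg_{k+1}$ with $U\cap JU=\{0\}$, contradicting Proposition~\ref{corolario_step}(ii).

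For part (iii), both statements follow from (i) and (ii). If $\frg_{k+r-1}\cap J\frg_{k+r-1}=\{0\}$ with $\frg_{k+r-1}\subsetneq\frg$, then part (i) applied at index $k+r-1$ gives $\dim\frg_{k+r-1}\leq n-2$, contradicting the strict-increase bound $\dim\frg_{k+r-1}\geq\dim\frg_k+(r-1)=n-1$; the alternative $\frg_{k+r-1}=\frg$ is incompatible with a trivial self-intersection. For the dimension bound $n\leq\dim\frg_{k+r-1}$, I take the smallest $R_1\in\{1,\ldots,r-1\}$ with $\frg_{k+R_1}\cap J\frg_{k+R_1}\neq\{0\}$, which exists by the first half of (iii). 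Minimality of $R_1$ makes the hypothesis of part (ii) applicable at $k+R_1-1$, yielding $\dim\frg_{k+R_1}\geq\dim\frg_{k+R_1-1}+2$, and summing this with the strict-increase bounds on $[k,k+R_1-1]$ and $[k+R_1,k+r-1]$ gives
\[
\dim\frg_{k+r-1}\geq (n-r+R_1-1) + 2 + (r-1-R_1) = n,
\]
completing the plan.
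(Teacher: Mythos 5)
Your proposal is correct and follows essentially the same strategy as the paper: the same two tools (Lemma~\ref{lemma1_SnN}~(i) and Proposition~\ref{corolario_step}) drive everything, the boundary dimensions are excluded one at a time, and part~(iii) is the same telescoping of the strict-increase and ``$+2$'' bounds. Where you differ is only in the bookkeeping: in~(i) you choose $v\notin\frg_k+J\frg_k$ and argue in the $2$-dimensional quotient, while the paper takes any $Y\in\frg_{k+1}\setminus\frg_k$ and uses $\frg_{k+1}\cap J\frg_k=\{0\}$; and in~(ii) you exclude $\dim\frg_{k+1}\in\{2n-1,2n-2\}$ uniformly by explicitly building an $n$-dimensional totally real subspace $U=V_R\oplus U_W$ of $\frg_{k+1}$, whereas the paper dispatches $2n-1$ by the general fact that a proper term of the ascending central series of an NLA has codimension at least $2$, and for $2n-2$ merely asserts (via Proposition~\ref{corolario_step}~(ii)) that $\frg_{k+1}$ would have to be $J$-invariant. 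Your construction is in effect the omitted proof of that assertion, so this is a welcome filling-in of detail rather than a different route.

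One step is incompletely justified: in the ``$+2$'' lower bound of~(ii), your parenthetical shows only that if $Z=a+Jb$ with $a,b\in\frg_k$ then $Jb=0$, i.e.\ it reduces to $Z\in\frg_k$; it does not yet exclude that case. To finish, note that $Z\in J\frg_{k+1}$ gives $JZ\in\frg_{k+1}$, so $Z\in\frg_k$ would put $JZ\in\frg_{k+1}\cap J\frg_k=\{0\}$ by Lemma~\ref{lemma1_SnN}~(i), forcing $Z=0$. With that one line added, the claim $Z\notin\frg_k+J\frg_k$ holds and the rest of your argument (the $(\alpha^2+\beta^2)Z$ computation and the direct-sum decompositions) goes through.
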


\begin{proof}
For part (i) observe that the lower bound is clear, because $\frg$ is a nilpotent Lie algebra and its ascending central series strictly
increases.
For the upper bound, first note that the hypothesis $\frg_k\cap J\frg_k=\{0\}$ leads to dim\,$\frg_k\leq n$.
Thus, we just need to discard the cases dim\,$\frg_k=n$ and dim\,$\frg_k=n-1$.

Let us first suppose that dim\,$\frg_k=n$. By Lemma \ref{lemma1_SnN} \textrm{(i)}, one has $\frg_{k+1}\cap J\frg_k=\{0\}$, so the dimension of the space $\frh=\frg_{k+1}\oplus J\frg_k\subseteq\frg$ must satisfy
$(n+1)+n\leq\text{dim\,}\frg_{k+1} + \text{dim\,}J\frg_k = \text{dim\,}\frh \leq \text{dim\,}\frg=2n$. As we can see, this is a contradiction.

Let us now consider dim\,$\frg_k=n-1$.
The nilpotency of $\frg$ guarantees the existence of some $Y\in\frg_{k+1}$ not contained in $\frg_k$.
Furthermore, as a consequence of Lemma \ref{lemma1_SnN} part \textrm{(i)} we have that $Y\neq JX$, for any $X\in\frg_k$.
Hence, one has $\frg_k\cap \langle Y \rangle=\{0\}$, and $\mathcal W=\frg_k\oplus\langle Y\rangle\subseteq \frg_{k+1}$ is an $n$-dimensional subspace satisfying $\mathcal W\cap J\mathcal W=\{0\}$.
However, this contradicts Proposition~\ref{corolario_step} \textrm{(ii)}.
Therefore, we conclude $k\leq$ dim\,$\frg_{k}\leq n-2$.

In part (ii), the lower bound comes straightforward.
For the upper bound, we first observe that Proposition~\ref{corolario_step}~\textrm{(i)}
and the nilpotency of $\frg$
imply that~$\dim \frg_{k+1}\leq 2(n-1)$. Consequently, it suffices to discard the case~$\dim \frg_{k+1}=2(n-1)$.
Let us then assume that~$\dim \frg_{k+1}=2(n-1)$.
Due to Proposition~\ref{corolario_step}~\textrm{(ii)}, notice that~$\frg_{k+1}$ is $J$-invariant
and thus $J\frg_{k+1}=\frg_{k+1}$. Therefore,
$$\{0\}=\frg_{k+1}\cap J\frg_k = J\frg_{k+1}\cap \frg_k = \frg_{k+1}\cap \frg_k = \frg_k,$$
which
is a contradiction.

If we now assume the additional hypothesis $\frg_{k+1}\cap J\frg_{k+1}\neq\{0\}$, then
there exists $0\neq X\in\frg$ such that $X\in\frg_{k+1}\cap J\frg_{k+1}$.  Moreover, since $\frg_{k+1}\cap J\frg_{k+1}$ is a $J$-invariant space, also $0\neq JX\in \frg_{k+1}\cap J\frg_{k+1}$.  In particular,
one has a $2$-dimensional subspace
$\mathcal V=\langle X,\, JX\rangle \subset \frg_{k+1}\cap J\frg_{k+1}$. In addition, since $\frg_{k+1}\cap J\frg_{k}=\{0\}$
by Lemma \ref{lemma1_SnN} (i), we can ensure
that $\mathcal V\cap \frg_k=\{0\}$ and the result holds.

Finally, let us prove part (iii).  Observe that if $\frg_{k+r-2}\cap J\frg_{k+r-2}\neq\{0\}$, then the result is trivial. Hence, let us focus on the case
$\frg_{k+r-2}\cap J\frg_{k+r-2}=\{0\}$. On the one hand,
by part (i), one has $\dim \frg_{k+r-2}\leq n-2$.
On the other hand, we know that $n-r=\dim \frg_k<\dim \frg_{k+1} < \ldots < \dim \frg_{k+r-2}$.
Since these inequalities are strict, it is easy to see that
$\dim \frg_{k+r-2}\geq n-2$. Joining these two facts, we can conclude that $\dim \frg_{k+r-2}=n-2$
and thus, $\dim \frg_{k+r-1}\geq n-1$.
Using (i) again, the result comes straightforward.
For the second part of the statement, use part (ii), bearing in mind that the given hypothesis
imply that there should be an integer $k\leq t \leq k+r-2$ such that $\frg_{t}\cap J\frg_{t}=\{0\}$
and $\frg_{t+1}\cap J\frg_{t+1}\neq \{0\}$.
\end{proof}

As an immediate consequence of part (ii) we have:

\begin{corollary}\label{k-k+1-sin-Js}
Let $(\frg,J)$ be a $2n$-dimensional NLA  endowed with a complex structure, where $n\geq3$.
If $\frg_k\cap J\frg_k=\{0\}$ and $\dim \frg_{k+1}=1+\dim \frg_{k}$, for some $k\geq 1$,
then $\frg_{k+1}\cap J\frg_{k+1}=\{0\}$.
\end{corollary}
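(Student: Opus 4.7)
The statement is really a direct reformulation of the strict-increase clause in Proposition~\ref{dim-g}(ii), so my plan is a one-step proof by contradiction. I would assume, towards a contradiction, that $\frg_{k+1}\cap J\frg_{k+1}\neq\{0\}$ while the two running hypotheses $\frg_k\cap J\frg_k=\{0\}$ and $\dim\frg_{k+1}=1+\dim\frg_k$ remain in force. The first hypothesis places us exactly in the setting of Proposition~\ref{dim-g}(ii), and the additional clause of that proposition — activated precisely by the assumption $\frg_{k+1}\cap J\frg_{k+1}\neq\{0\}$ — produces the strict bound $2+\dim\frg_k\leq\dim\frg_{k+1}$. Combining this with the equality $\dim\frg_{k+1}=1+\dim\frg_k$ gives $1\geq 2$, the desired contradiction.

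\paragraph{Role of the hypotheses.} The condition $n\geq 3$ enters only to guarantee that the full statement of Proposition~\ref{dim-g}(ii) is meaningfully available: it is the regime in which the upper bound $\dim\frg_{k+1}\leq 2n-3$ is compatible with the lower bound $1+\dim\frg_k$, and in which the two-dimensional $J$-invariant subspace $\langle X,JX\rangle\subset\frg_{k+1}\cap J\frg_{k+1}$ that is used inside the proof of Proposition~\ref{dim-g}(ii) to push the lower bound from $1+\dim\frg_k$ to $2+\dim\frg_k$ has enough ambient room. Concretely, one uses that $\frg_{k+1}\cap J\frg_k=\{0\}$ by Lemma~\ref{lemma1_SnN}(i), so this two-plane meets $\frg_k$ only at the origin and contributes two fresh dimensions to $\frg_{k+1}$.

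\paragraph{Obstacles.} There is no real obstacle here; once Proposition~\ref{dim-g}(ii) is in hand, the corollary is a single dimension-counting line. The only thing to verify at the moment of citation is that we are allowed to invoke the additional clause — i.e.\ that the hypothesis $\frg_k\cap J\frg_k=\{0\}$ (needed to put us in the setup of the proposition) is among the assumptions of the corollary, which it is. No further constructions or separate arguments are required.
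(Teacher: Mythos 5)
Your proposal is correct and is exactly the paper's argument: the paper states this corollary as an immediate consequence of Proposition~\ref{dim-g}(ii), whose ``furthermore'' clause forces $\dim\frg_{k+1}\geq 2+\dim\frg_k$ whenever $\frg_{k+1}\cap J\frg_{k+1}\neq\{0\}$, contradicting $\dim\frg_{k+1}=1+\dim\frg_k$. Your additional remarks on the role of $n\geq 3$ are consistent with how that hypothesis is used in the proposition.
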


Using the result above, we reach an algebraic obstruction to the existence of complex structures:


\begin{corollary}\label{dim-no-compleja}
Let $\frg$ be a $2n$-dimensional NLA.
If $\dim \frg_{n-1}=n-1$, then $\frg$ does not admit any complex
structure.
\end{corollary}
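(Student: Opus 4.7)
The plan is a proof by contradiction. Suppose $\frg$ admits a complex structure $J$. Because $\frg$ is nilpotent, the ascending central series strictly increases from $\frg_1$ to $\frg$; combined with the elementary bound $\dim \frg_k \geq k$ for $k\geq 1$, the hypothesis $\dim\frg_{n-1}=n-1$ forces $\dim\frg_k=k$ for every $1\leq k\leq n-1$. In particular the center $\frg_1$ is one-dimensional.

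The next step is to split along the dichotomy in Definition \ref{tipos_J}. Reading off \eqref{adapted-series} at the first step gives $\fra_1(J)=\frg_1\cap J\frg_1$, which is precisely the maximal $J$-invariant subspace of $\frg_1$. If $J$ is quasi-nilpotent, then $\fra_1(J)\neq\{0\}$ is a non-zero $J$-invariant subspace of the $1$-dimensional $\frg_1$; being $J$-invariant it is even-dimensional, a contradiction.

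It remains to exclude the strongly non-nilpotent case, in which $\frg_1\cap J\frg_1=\fra_1(J)=\{0\}$. Assuming first $n\geq 3$, I would feed this initial condition into Corollary \ref{k-k+1-sin-Js}: since $\dim\frg_{k+1}=1+\dim\frg_k$ for every $1\leq k\leq n-2$, an obvious induction yields $\frg_k\cap J\frg_k=\{0\}$ for every $1\leq k\leq n-1$. But Proposition \ref{dim-g}(i) applied to $k=n-1$ then forces $\dim\frg_{n-1}\leq n-2$, contradicting $\dim\frg_{n-1}=n-1$. The case $n=2$ is already handled by the previous paragraph, since every complex structure on a $4$-dimensional NLA is nilpotent and hence quasi-nilpotent. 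The only step that deserves any care is the identification $\fra_1(J)=\frg_1\cap J\frg_1$; thereafter the argument is a clean induction combining the propagation in Corollary \ref{k-k+1-sin-Js} with the upper bound from Proposition \ref{dim-g}(i).
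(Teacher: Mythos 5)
Your proposal is correct and follows essentially the same route as the paper: deduce $\dim\frg_k=k$ for $k\leq n-1$, note that a $1$-dimensional center forces $\fra_1(J)=\frg_1\cap J\frg_1=\{0\}$ so $J$ is SnN, propagate $\frg_k\cap J\frg_k=\{0\}$ up to $k=n-1$ via Corollary~\ref{k-k+1-sin-Js}, and contradict the bound in Proposition~\ref{dim-g}(i). The only cosmetic difference is that you spell out the even-dimensionality argument for $\fra_1(J)$ and fold the $n=2$ case into the quasi-nilpotent branch, whereas the paper simply invokes the known non-existence of SnN structures in dimension~$4$.
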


\begin{proof}
First note that the nilpotency of $\frg$ implies that $\frg_{k}\subsetneq \frg_{k+1}$, for each $1\leq k \leq s-1$, being $s$ the nilpotency step of $\frg$.
In fact, using the hypothesis we have that $\dim \frg_{k}=k$, for every $1\leq k\leq n-1$, or equivalently, $\dim\,\frg_{k+1}=1+\dim\,\frg_k$.

Suppose that there exists a complex structure $J$ on $\frg$.
Since dim\,$\frg_1=1$, clearly
$\frg_{1}\cap J\frg_{1}=\{0\}$, so $J$ must be of SnN type. For $n=2$, it is well known that there are no SnN complex structures on NLAs.  For $n\geq 3$,
we can apply Corollary~\ref{k-k+1-sin-Js} with~$k=1$ to get $\frg_{2}\cap J\frg_{2}=\{0\}$, but we are again in the conditions of Corollary~\ref{k-k+1-sin-Js}, this time with $k=2$.
Repeating the process,
one finally gets $\frg_{n-1}\cap J\frg_{n-1}=\{0\}$.
However, this contradicts Proposition~\ref{dim-g}, part \textrm{(i)}.
\end{proof}

We notice that this result provides a restriction to the existence of complex structures based on the ascending type of the NLA:
if a $2n$-dimensional Lie algebra $\frg$ has ascending type $(1,2,\ldots,n-1,m_n,\ldots,m_s)$, then
$\frg$ does not admit any complex structure.

As a consequence, we recover the following result proven by Goze and Remm in \cite{GR}.
Recall that an $m$-dimensional filiform Lie algebra is an $(m-1)$-step
nilpotent Lie algebra.


\begin{corollary}\label{dim-no-compleja-fil}
Filiform Lie algebras do not admit complex structures.
\end{corollary}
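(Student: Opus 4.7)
The plan is to reduce the statement immediately to Corollary~\ref{dim-no-compleja}, after identifying the shape of the ascending central series of a filiform Lie algebra.

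First I would dispose of the trivial parity obstruction: an $m$-dimensional Lie algebra with $m$ odd cannot carry an almost complex structure at all, so we may assume $m=2n$ with $n\geq 2$.

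Next, I would extract the ascending type of a filiform Lie algebra $\frg$ of dimension $m=2n$. By definition the nilpotency step is $s=m-1=2n-1$, so the ascending central series
\[
\{0\}=\frg_0\subsetneq \frg_1\subsetneq \frg_2\subsetneq\cdots\subsetneq \frg_{s-1}\subsetneq \frg_s=\frg
\]
has $s=2n-1$ strict inclusions. Since $\dim\frg_{s-1}\leq 2n-2$ for any nilpotent Lie algebra, and since the $2n-1$ integers $\dim\frg_1<\dim\frg_2<\cdots<\dim\frg_{s-1}$ must all lie in $\{1,\ldots,2n-2\}$, the only possibility is $\dim\frg_k=k$ for every $1\leq k\leq 2n-2$, with $\dim\frg_{2n-1}=2n$. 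In particular $\dim\frg_{n-1}=n-1$.

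Finally, I would invoke Corollary~\ref{dim-no-compleja}: any $2n$-dimensional NLA with $\dim\frg_{n-1}=n-1$ admits no complex structure. Applying it to our filiform $\frg$ gives the conclusion. The argument is essentially bookkeeping on the series; no genuine obstacle arises beyond verifying that the extremal nilpotency step forces the ascending type to be $(1,2,\ldots,2n-2,2n)$, which was already observed in the remark following Corollary~\ref{dim-no-compleja}.
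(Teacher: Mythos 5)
Your proof is correct and follows essentially the same route as the paper: deduce that the filiform condition forces the ascending type $(1,2,\ldots,2n-2,2n)$, hence $\dim\frg_{n-1}=n-1$, and apply Corollary~\ref{dim-no-compleja}; you merely supply the counting argument that the paper leaves implicit. One trivial slip: there are $s-1=2n-2$ integers $\dim\frg_1<\cdots<\dim\frg_{s-1}$, not $2n-1$, but this does not affect the conclusion.
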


\begin{proof}
Any $2n$-dimensional filiform Lie algebra $\frg$ has ascending type $(1,2,\ldots,n-1,\ldots,2n-2,2n)$.
Hence, $\dim \frg_{n-1}=n-1$ and the result is a direct consequence of Corollary~\ref{dim-no-compleja}.
\end{proof}

\subsection{Dimension of the center}\label{proof}

The goal of this section is to obtain an upper bound for the dimension of the center of a $2n$-dimensional NLA $\frg$ endowed with an SnN complex structure $J$.
More precisely, we improve the bound given in Proposition~\ref{dim-g} (i) for $k=1$ when $n\geq 4$.

\begin{theorem}\label{prop_centro}
Let ($\frg, J$) be a $2n$-dimensional nilpotent Lie algebra with $n\geq 4$ endowed with an strongly non-nilpotent complex structure $J$.
Then, $1\leq \dim\frg_1 \leq n-3$.
\end{theorem}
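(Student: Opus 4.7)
My plan is to argue by contradiction. Proposition~\ref{dim-g}(i) applied with $k=1$ already gives $\dim\frg_1\leq n-2$, so I only need to rule out the value $\dim\frg_1=n-2$; assume it throughout. Since $J$ is SnN, $\frg_1\cap J\frg_1=\{0\}$, and I fix a $J$-adapted basis $\{X_1,JX_1,\ldots,X_n,JX_n\}$ with $\frg_1=\langle X_1,\ldots,X_{n-2}\rangle$. Centrality of the $X_i$ together with the Nijenhuis identity applied to pairs in $\frg_1\times\frg_1$ gives $[JX_i,X_j]=[JX_i,JX_j]=0$ and, more importantly, shows that $\mathrm{ad}_{JX_i}$ is $J$-linear on all of $\frg$ for $i,j\leq n-2$.

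From this $J$-linearity I would first sharpen Lemma~\ref{lemma1_SnN}(i) to $J\frg_1\cap\frg_2=\{0\}$: if $JX\in\frg_2$ with $X\in\frg_1$, then $[JX,JY]=J[JX,Y]\in J\frg_1\cap\frg_1=0$ for every $Y$, making $JX$ central, hence $JX=0$. Proposition~\ref{dim-g}(iii) with $k=1,\,r=2$ then gives $\dim\frg_2\geq n$ and $\mathcal J:=\frg_2\cap J\frg_2\neq\{0\}$. Combined with $J\frg_1\cap\frg_2=0$, we have $\mathcal J\cap(\frg_1\oplus J\frg_1)=0$, so $\mathcal J$ injects into the $4$-dimensional quotient $\frg/(\frg_1\oplus J\frg_1)$ and $\dim\mathcal J\in\{2,4\}$. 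A further Nijenhuis computation for $Z\in\mathcal J$ and arbitrary $Y\in\frg$ (using $[Z,Y],[JZ,Y]\in\frg_1$) produces the key identity $[JZ,Y]=-[Z,JY]$; with $Y=X_i$ this reads $[Z,JX_i]=0$, i.e.\ $\mathcal J$ centralises $\frg_1\oplus J\frg_1$.

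The case $\dim\mathcal J=4$ is ruled out at once: then $\frg=(\frg_1\oplus J\frg_1)\oplus\mathcal J$, the first summand is abelian and is centralised by $\mathcal J$, while $[\mathcal J,\mathcal J]\subseteq\frg_1$; hence $[\frg,\frg]\subseteq\frg_1$ and $\frg$ is $2$-step, contradicting Corollary~\ref{corolario_smayor3}. The harder case is $\dim\mathcal J=2$; write $\mathcal J=\langle Z,JZ\rangle$. The estimate $\dim(\frg_2+J\frg_2)=2\dim\frg_2-2\leq 2n$ forces $\dim\frg_2\in\{n,n+1\}$. In either subcase I would complete to a full basis by choosing a $J$-invariant plane $\mathcal D=\langle W,JW\rangle$ (and an auxiliary element $Z_0\in\frg_2\setminus(\frg_1\oplus\mathcal J)$ when $\dim\frg_2=n+1$). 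The remaining bracket data are then the vectors $v_i=[JX_i,W]\in\frg$ (with $[JX_i,JW]=Jv_i$) and the scalars $P=[Z,JZ]$, $Q=[Z,W]$, $R=[Z,JW]\in\frg_1$, together with $T=[W,JW]$. The assignment $X\mapsto[JX,W]$ is injective on $\frg_1$: if $v(X)=0$, then $\mathrm{ad}_{JX}$ vanishes identically, so $JX\in\frg_1\cap J\frg_1=0$.

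To close the argument I would chain Jacobi identities on the triples $(JX_i,W,JW)$, $(JX_i,Z,W)$ and $(JX_i,JZ,W)$, in conjunction with the relation $[JZ,Y]=-[Z,JY]$, to obtain linear equations in the components of the $v_i$ (with respect to the splitting $\frg=\frg_1\oplus J\frg_1\oplus\mathcal J\oplus\mathcal D$) and in $P,Q,R$. The plan is that these equations first force every $v_i$ to lie in $\mathcal J$; the injection $v\colon\frg_1\hookrightarrow\mathcal J$ then yields $n-2\leq\dim\mathcal J=2$, settling all $n\geq 5$ at once. For $n=4$ the further Jacobi relations would then drive $P=Q=R=0$ and $v\equiv 0$, which places $Z$ in the centre of $\frg$ and contradicts $\mathcal J\cap\frg_1=0$. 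I expect this final step to be the main obstacle: orchestrating the Jacobi identities across the two subcases on $\dim\frg_2$ and the possible nilpotency steps $s\geq 3$ requires delicate bookkeeping, and the hypothesis $n\geq 4$ enters precisely here, since one needs enough pairs $(X_i,JX_i)$ in $\frg_1\times J\frg_1$ for the Jacobi relations to outnumber what the $2$-dimensional space $\mathcal J$ can absorb.
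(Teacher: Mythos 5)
Your reduction to $\dim\frg_1=n-2$ and the first half of the construction are sound, and several of your intermediate observations are genuinely nice: that $\mathrm{ad}_{JX}$ is $J$-linear for $X$ central, that $\mathcal J=\frg_2\cap J\frg_2$ centralises $\frg_1\oplus J\frg_1$, the two-step contradiction when $\dim\mathcal J=4$, and the injectivity of $v\colon X\mapsto [JX,W]$. Up to this point you essentially recover the paper's conclusion that $\frg_2=\frg_1\oplus\mathcal V$ with $\mathcal V$ a $J$-invariant plane (your subcase $\dim\frg_2=n+1$ is in fact vacuous: it would put an $n$-dimensional totally real subspace inside $\frg_2$, contradicting Proposition~\ref{corolario_step}\,(ii) with $k=1$, so you could discard it at once).

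The gap is in the closing step, and you have correctly located it yourself: the claim that the Jacobi identities on $(JX_i,W,JW)$, $(JX_i,Z,W)$, $(JX_i,JZ,W)$ force $v_i\in\mathcal J$ is not proved, and it is not true at this level of generality. Writing $v_i=a_i+Jb_i+z_i+w_i$ along $\frg_1\oplus J\frg_1\oplus\mathcal J\oplus\mathcal D$, the two triples involving $Z$ only give $[v_i,Z]=[v_i,JZ]=0$, which are conditions on $z_i$ and $w_i$ against $P,Q,R$ and say nothing about $a_i,b_i$; the triple $(JX_i,W,JW)$ gives a relation of the shape $Jv(b_i)-v(a_i)+(\cdots)=[JX_i,T]$, which couples $a_i,b_i$ to the unknown $T=[W,JW]$ and to $v$ itself, so the system is quadratic rather than linear and does not annihilate the $\frg_1\oplus J\frg_1$-components. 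Indeed, the configuration the paper labels ``$\dim\frg_3>n+1$'' satisfies all your listed constraints while every $v_k$ ($k\geq 2$) is \emph{forced} to have a non-zero $\frg_1$-component (because $JX_k\notin\frg_3$); the contradiction there only emerges after importing membership conditions from $\frg_3$ and $\frg_4$ and the termination of the ascending central series. Your plan never looks beyond $\frg_2$, so it has no mechanism to supply these conditions. To complete the argument you would have to reintroduce exactly the case analysis on $\dim(\frg_3\cap J\frg_1)$ and $\dim\frg_3,\dim\frg_4$ that the paper carries out, at which point the two proofs coincide.
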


\begin{proof}
As a consequence of Proposition~\ref{dim-g} \textrm{(i)}, it suffices to discard the case dim\,$\frg_1 = n-2$.
We will prove the theorem by contradiction, using the following idea.
As a starting point, we consider a $2n$-dimensional vector space $\frg$ with an almost-complex structure~$J$.
We will try to endow $\frg$ with the structure of a nilpotent Lie algebra with $\text{dim}\,\frg_1=n-2$, assuming
that $J$ must be integrable and of SnN type. In particular, we will first impose that ${\mathfrak g}$ has an $(n-2)$-dimensional center
and then find all possible combinations for the remaining terms in the ascending central
series $\{ {\mathfrak g}_k \}_k$. This will be done defining the Lie brackets of ${\mathfrak g}$
in terms of the elements of a basis $\mathcal B$ that will be constructed along the process,
attending to the nilpotency of ${\mathfrak g}$ and the Jacobi identity
\begin{equation} \label{Jacobi}
Jac\,(X,Y,Z) \ := \ \big[[X,Y],Z \big] + \big[[Y,Z],X\big] + \big[[Z,X],Y \big] \ = \ 0,
\end{equation}
for any $X,Y,Z\in\frg$,
the Nijenhuis
condition~\eqref{Nijenhuis}, and the strongly non-nilpotency of $J$.
These four conditions are checked at every stage of the method, discarding the cases in which
any of them fails. In the end, we will see that all the possible cases can be rejected.

Let us start with the proof.
Suppose that $\dim \frg_1=n-2$ and consider
$$\frg_1=\langle X_1,\ldots,X_{n-2}\rangle.$$
In this way, we can choose $X_1,\ldots,X_{n-2}$ as generators in $\mathcal{B}$.
By the Nijenhuis condition, we have
\footnote{ {\sc Notation:} By $[X_k,\cdot\ ]_{1\leq k\leq n-2}$ we denote all the brackets $[X_k,\cdot\ ]$, where $1\leq k\leq n-2$.}
$$[X_k,\cdot\ ]_{1\leq k\leq n-2}=0, \qquad [JX_k,JX_r]_{1\leq k < r \leq n-2}=0,$$
since $X_1,\ldots,X_{n-2}$ are in the center of $\frg$.

We want to complete $\{X_{k},JX_{k}\}_{k=1}^{n-2}$ up to a $J$-adapted basis $\mathcal B$ of $\frg$, for which it suffices to find two vectors linearly independent with the previous ones that will be called $X_{n-1}$ and $X_{n}$.
Observe that the yet completely undetermined brackets for $\frg$ are exactly those involving
at least one of the elements $X_{n-1},$ $X_n,$ $JX_{n-1},$ $JX_n\in\mathcal{B}$.

We should focus on $\frg_2$. Our first observation is that $\frg_2\cap J\frg_1=\{0\}$, due to
Lemma~\ref{lemma1_SnN}~\textrm{(i)}. Moreover, we know that $\dim\frg_2\geq n-1$,
so applying Proposition~\ref{dim-g}, part \textrm{(i)} with $k=2$, one immediately has $\frg_2\cap J\frg_2\neq\{0\}$,
and thus $\dim\frg_2 \geq n$ (see Proposition~\ref{dim-g}~\textrm{(ii)}).
Hence, by Proposition~\ref{corolario_step}, part \textrm{(ii)} for $k=1$, we conclude that $\dim\frg_2=n$.
As a consequence $\frg_2 = \frg_1\oplus \mathcal V$, being $\mathcal V$ a $J$-invariant 2-dimensional space.
We can take a basis $\{X, JX\}$ of $\mathcal V$ and then choose $X_{n-1} = X$. Then,
$$\frg_1=\langle X_1,\ldots,X_{n-2}\rangle, \quad \frg_2=\langle X_1,\ldots,X_{n-1},JX_{n-1}\rangle.$$
Since the elements $X_{n-1}, JX_{n-1}$ of the basis $\mathcal B$ determined above belong to $\frg_2$,
we are able to set the following real brackets:
$$[X_{n-1},JX_k]_{1\leq k\leq n-2} \ = \ \sum\nolimits_{i=1}^{n-2} \mu_i^k\,X_i,\qquad\quad
[X_{n-1},JX_{n-1}] \ = \ \sum\nolimits_{i=1}^{n-2} \beta_i\,X_i.$$
Applying the Nijenhuis condition,
$$
[JX_{k},JX_{n-1}]_{1\leq k\leq n-2} \ = \ -\sum\nolimits_{i=1}^{n-2} \mu_i^k\,JX_i.$$
These brackets belong to $\frg_{1}$, so necessarily $\mu_i^k=0$ for every $1\leq i,k \leq n-2$.

Up to this moment, we have:
\begin{equation}\label{brackets-centro}
\begin{array}{ll}
[X_k,\cdot\ ]_{1\leq k\leq n-2}=0,
  & [X_{n-1},JX_{n-1}] \ = \ \sum_{i=1}^{n-2} \beta_i\,X_i,\\[4pt]
[X_{n-1},JX_k]_{1\leq k\leq n-2} \ = 0,  \quad
  & [JX_k,JX_r]_{1\leq k < r \leq n-1}=0.
\end{array}
\end{equation}
Since $\frg_1$ and $\frg_2$ are already determined, let us move to $\frg_3$.  We know that dim\,$\frg_3\geq n+1$, due to the nilpotency of $\frg$. First suppose that $\frg_3\cap J\frg_1=\{0\}$.  \label{contra}  Then,
there should be a vector in $\frg_{3}$ linearly
independent with $\{X_{i},\, JX_{i}\}_{i=1}^{n-1}$.
We select this vector as an element of $\mathcal B$ and denote it by $X_{n}$.
Moreover, due to the nilpotency of $\frg$
one could find $Z\in \mathcal W=\langle X_{1},\ldots,X_{n}\rangle\subset\frg_{3}$
such that $JZ\in\frg_{4}$ and $JZ\notin\frg_{3}$. Applying
Lemma \ref{lemma1_serie_asc} \textrm{(ii)} with $r=3$, there exists an element $Y\in\frg_{2}$ such that
$JY\in\frg_{3}$ and $JY\notin\frg_{2}$, but this is not possible by construction.

Therefore, we may assume that $\frg_3\cap J\frg_1\neq \{0\}$, i.e. there is a vector $X\in\frg_1$ such that  $JX\in\frg_3$.  We next see that it is possible to set $JX_1\in \frg_3$, without loss of generality. Notice that a generic non-zero $X\in \frg_1$ can be expressed as $X=\sum_{i=1}^{n-2}s_i\,X_i$, with $(s_1,\ldots,s_{n-2})\neq (0,\ldots,0)$.
Since $JX$ is a generator of $\frg_{3}$, we would like to include it in $\mathcal B$.
However, as we want $\mathcal B$ to be $J$-adapted, also $X$ should belong to $\mathcal B$, so
we must arrange its generators in order to fulfill this requirement. We can do
this as follows:
\begin{itemize}
\item if $s_1\neq 0$, consider $X'_1=X$ and $X'_k=X_k$, for every $2\leq k \leq n-1$; \label{arrangement}
\item if $s_1=\cdots=s_r=0$, for some $1\leq r\leq n-3$, and $s_{r+1}\neq 0$, then choose $X'_1=X$, $X'_{r+1}=X_1$,
and $X'_k=X_k$, for
every $2\leq k \leq n-1$ such that $k\neq r+1$.
\end{itemize}
Let us notice that this type of changes do not affect the structure of the ascending
central series that has been adjusted up to this moment.
In fact, the brackets of the elements~$\{X'_k,JX'_k\}_{k=1}^{n-1}$
still follow~\eqref{brackets-centro}, maybe modifying the coefficients~$\beta_i$ if necessary (which were anyway free).
Furthermore, we get $JX'_1\in\frg_3$.

This fact allows us to conclude that our assumption is equivalent to $JX_1\in\frg_3$, \emph{up to arrangement of generators}.  In this way, we have
\begin{equation}\label{serie1}
\frg_1=\langle X_1,\ldots,X_{n-2}\rangle, \ \,
  \frg_2=\langle X_1,\ldots,X_{n-1},JX_{n-1}\rangle, \ \,
  \frg_3\supseteq\langle X_1,\ldots,X_{n-1},JX_1,JX_{n-1}\rangle.
  \end{equation}
Since $JX_1\in\frg_3$ but $JX_{1}\notin\frg_2$, there must exist an element
$Y\in\frg$ such that $[JX_1,Y]\in\frg_2$ and satisfying $[JX_1,Y]\notin\frg_1$.
In view of the brackets \eqref{brackets-centro}, this element $Y$ should be linearly independent with
$\{X_{i},\, JX_{i}\}_{i=1}^{n-1}$. Hence, we can choose $X_{n}=Y$ as a
new generator in $\mathcal{B}$. Observe that $X_{n}\in\frg_{k}$ and $X_n\notin\frg_{2}$ for some $k\geq 3$. Using the fact that $JX_1\in\frg_3$, we can take:
$$[X_n,JX_1]=\sum\nolimits_{i=1}^{n-2} b_i^1\,X_i+b_{n-1}^1\,X_{n-1}+B_{n-1}^1\,JX_{n-1},$$
where $b_{i}^{1},B_{n-1}^{1}\in\mathbb{R}$, for $1\leq i \leq n-1$. Let us remark that one needs
$(b_{n-1}^1,B_{n-1}^1)\neq (0,0)$ in order to ensure
$X_{n}\notin\frg_{2}$. Furthermore, the Nijenhuis condition yields
$$[JX_1,JX_n]=-\sum\nolimits_{i=1}^{n-2} b_i^1\,JX_i-b_{n-1}^1\,JX_{n-1}+B_{n-1}^1\,X_{n-1} \ \in \ \frg_2,$$
and thus $b_i^1=0$, for every $1\leq i \leq n-2$.  Moreover, $X_{n-1},\, JX_{n-1}\in\frg_2$, so
$$
[X_{n-1},X_n] = \sum\nolimits_{i=1}^{n-2} \alpha_i\,X_i, \quad
[X_{n-1},JX_{n}] = \sum\nolimits_{i=1}^{n-2} \gamma_i\,X_i,\quad
[X_n,JX_{n-1}] = \sum\nolimits_{i=1}^{n-2}a_i\,X_i,
$$
where $\alpha_{i},\,a_{i},\,  \gamma_{i}\in\mathbb{R}$, for $1\leq i,k\leq n-2$.
If we now apply the Nijenhuis
condition, then we get
$$[JX_{n-1},JX_{n}] \ = \ \sum\nolimits_{i=1}^{n-2} \alpha_i\,X_i + \sum\nolimits_{i=1}^{n-2} (\gamma_i-a_{i})\,JX_i\in \frg_1.$$
Necessarily $a_i=\gamma_i$, for all $1\leq i,k \leq n-2$.

Our collection of Lie brackets is now:
\begin{equation}\label{brackets-centro-1}
\begin{array}{lcl}
[X_k,\cdot\ ]_{1\leq k\leq n-2} \ = \ 0,
    & & [X_n,JX_{n-1}]=\sum\nolimits_{i=1}^{n-2}\gamma_i\,X_i, \\[2pt]
[X_{n-1},X_n] \ = \ \sum\nolimits_{i=1}^{n-2} \alpha_i\,X_i,
    & & [X_n,JX_n] \text{ unknown}, \\[2pt]
[X_{n-1},JX_k]_{1\leq k\leq n-2} \ = \ 0,
    & & [JX_k,JX_r]_{1\leq k < r\leq n-1} \ = \ 0,\\[2pt]
[X_{n-1},JX_{n-1}] \ = \ \sum\nolimits_{i=1}^{n-2} \beta_i\,X_i,
    & & [JX_1,JX_n] \ = \ B^1_{n-1}\,X_{n-1} - b^1_{n-1}\,JX_{n-1},\\[2pt]
[X_{n-1},JX_n] \ = \ \sum\nolimits_{i=1}^{n-2} \gamma_i\,X_i,
    & & [JX_k,JX_n]_{2\leq k\leq n-2} \text{ unknown}, \\[2pt]
[X_n,JX_1] \ = \ b^1_{n-1}\,X_{n-1} + B^1_{n-1}\,JX_{n-1},
    & & [JX_{n-1},JX_n] \ = \ \sum\nolimits_{i=1}^{n-2} \alpha_i\,X_i,\\[2pt]
[X_n,JX_k]_{2\leq k\leq n-2} \text{ unknown},
    & &
\end{array}
\end{equation}
where $\alpha_{i}, \beta_{i}, \gamma_{i}\in\mathbb{R}$, for $1\leq i \leq n-2$.  Let us remark that only the values preserving the dimension of the ascending
central series~\eqref{serie1} are valid for our purposes.

Recall that we also have to ensure that the Jacobi identity \eqref{Jacobi} holds for any triplet $(X, Y, Z)$.
Computing $Jac(X_n, JX_1, Z)$, for $Z= X_{n-1}, J X_{n-1}$,
we obtain:
$$b^1_{n-1}\beta_i = B^1_{n-1}\beta_i = 0,\quad 1\leq i \leq n-2.$$
Since $(b^1_{n-1}, B^1_{n-1}) \neq(0,0)$, then $\beta_i=0$ for $1\leq i \leq n-2.$  Using these conditions, \eqref{brackets-centro-1} become:
\begin{equation}\label{brackets-centro-2}
\begin{array}{lcl}
[X_k,\cdot\ ]_{1\leq k\leq n-2} \ = \ 0,
    & & [X_n,JX_{n-1}]=\sum\nolimits_{i=1}^{n-2}\gamma_i\,X_i, \\[2pt]
[X_{n-1},X_n] \ = \ \sum\nolimits_{i=1}^{n-2} \alpha_i\,X_i,
    & & [X_n,JX_n] \text{ unknown}, \\[2pt]
[X_{n-1},JX_k]_{1\leq k\leq n-1} \ = \ 0,
    & & [JX_k,JX_r]_{1\leq k < r\leq n-1} \ = \ 0,\\[2pt]
[X_{n-1},JX_n] \ = \ \sum\nolimits_{i=1}^{n-2} \gamma_i\,X_i,
    & & [JX_1,JX_n] \ = \ B^1_{n-1}\,X_{n-1} - b^1_{n-1}\,JX_{n-1},\\[2pt]
[X_n,JX_1] \ = \ b^1_{n-1}\,X_{n-1} + B^1_{n-1}\,JX_{n-1},
    & & [JX_k,JX_n]_{2\leq k\leq n-2} \text{ unknown}, \\[2pt]
[X_n,JX_k]_{2\leq k\leq n-2} \text{ unknown},
    & & [JX_{n-1},JX_n] \ = \ \sum\nolimits_{i=1}^{n-2} \alpha_i\,X_i,
\end{array}
\end{equation}
where the coefficients are real numbers preserving the ascending type and satisfying the Jacobi identity.

Let us now consider  $Jac\,(X_n,JX_n,JX_1)$.  Using \eqref{brackets-centro-2}, we get
$$\big[[X_n,JX_n],JX_1\big] \ = \
2\,\sum\nolimits_{i=1}^{n-2}\left(b_{n-1}^1\,\gamma_i+B_{n-1}^1\,\alpha_i\right)\,X_i.$$
At this point, we ignore the value of $[X_n,JX_n]$. However, it is clear that such bracket will depend, at most,
on all the other elements of the basis, $\{X_i,JX_{i}\}_{i=1}^{n-1}$.
Since the bracket of each one of these elements with $JX_1$ equals zero, as \eqref{brackets-centro-2} shows, we can conclude that $\big[[X_n,JX_n],JX_1\big]=0$. Hence,
\begin{equation}\label{ec1}
b_{n-1}^1\,\gamma_i+B_{n-1}^1\,\alpha_i = 0, \ \forall\, i=1,\ldots,n-2.
\end{equation}
We will make use of these equations below.

Let us now go back to the series~\eqref{serie1}. To finish the proof we will study two different cases, namely,
dim\,$(\frg_3\cap J\frg_1)\geq 2$ and dim\,$(\frg_3\cap J\frg_1)=1$.
We will see that none of them is valid and in this way, condition $\frg_3\cap J\frg_1\neq\{0\}$ will be contradicted.  Since we have already refused the condition $\frg_3\cap J\frg_1=\{0\}$ (see p. \pageref{contra}) the proof of the theorem will be completed.


\medskip\noindent
$\bullet$ \ \textsc{Study of} dim\,$(\frg_3\cap J\frg_1)\geq 2$.

\smallskip
\noindent Assume there is another element in $\frg_1$ whose image by $J$ belongs to~$\frg_3$.
Arranging generators if necessary, we can suppose $JX_2\in\frg_3$:
$$\frg_1=\langle X_1,\ldots,X_{n-2}\rangle, \quad \frg_2=\langle X_1,\ldots,X_{n-1},JX_{n-1}\rangle,\quad \frg_3\supseteq\langle X_1,\ldots,X_{n-1},JX_1,JX_{2},JX_{n-1}\rangle.$$
Then, it should be possible to find an element $Y\in\frg$ such that $[JX_2,Y]\in\frg_2$ and $[JX_2,Y]\notin\frg_1$.
From the brackets \eqref{brackets-centro-2}, one can see that
$Y$ must depend on $X_n$ or $JX_n$. Therefore, repeating a similar argument to that for $JX_1$
we have
$$
[X_n,JX_2] =b_{n-1}^2\,X_{n-1}+B_{n-1}^2\,JX_{n-1},\quad
[JX_2,JX_n] =B_{n-1}^2\,X_{n-1}-b_{n-1}^2\,JX_{n-1},
$$
where $b_{n-1}^{2},B_{n-1}^{2}\in\mathbb{R}$. In particular
$(b_{n-1}^2,B_{n-1}^2)\neq(0,0)$ since $JX_{2}$ does not belong to $\frg_{2}$.
The Jacobi identity \eqref{Jacobi} for the triplet $(X_n, JX_n, JX_2)$ together with \eqref{brackets-centro-2} leads to
$$\big[[X_n,JX_n],JX_2\big] \ = \ 2\,\sum\nolimits_{i=1}^{n-2}\left(b_{n-1}^2\,\gamma_i+B_{n-1}^2\,\alpha_i\right)\,X_i.$$
By an analogous reasoning to that used for \eqref{ec1}, we get
\begin{equation}\label{ecuation2}
b_{n-1}^2\,\gamma_i+B_{n-1}^2\,\alpha_i = 0, \ \forall\, i=1,\ldots,n-2.
\end{equation}
Let us solve the system of equations given by \eqref{ec1} and \eqref{ecuation2}.

We first suppose  that $b_{n-1}^1=0$.  Then $B_{n-1}^1\neq 0$ and we get  $\alpha_i=0$ for $1\leq i \leq n-2$,
from~\eqref{ec1}. Observe that
$(\gamma_1,\ldots,$ $\gamma_{n-2})\neq(0,\ldots,0)$, or otherwise $X_{n-1},$ $JX_{n-1}\in\frg_1$, which
is a contradiction. Therefore,
from \eqref{ecuation2} we conclude $b_{n-1}^2=0$. Now, we take
$0\neq X=B_{n-1}^2\,JX_1-B_{n-1}^1\,JX_2\ \in \ J\frg_1.$
Note that the bracket of $X$ with every element of the basis vanishes. Thus $X\in\frg_1\cap J\frg_1$, but this is not possible as
$J$ is of strongly non-nilpotent type.

Suppose now $b_{n-1}^1\neq 0$.  Then we can solve $\gamma_i$ from \eqref{ec1}
and replacing these values in \eqref{ecuation2}, we get:
$$\alpha_i\,\left(B_{n-1}^2-\frac{B_{n-1}^1\,b_{n-1}^2}{b_{n-1}^1}\right)=0,  \quad 1\leq i \leq n-2.$$
Notice that there exists some $\alpha_i\neq 0$ (otherwise, $\gamma_i=0$, for every $1\leq i \leq n-2$), so the other factor in the equations above should vanish. In particular, this implies that  $b_{n-1}^2\neq 0$ (or otherwise $(b^2_{n-1},B^2_{n-1})=(0,0)$, which is not allowed).
Take $0\neq X=b_{n-1}^2\,JX_1-b_{n-1}^1\,JX_2$. Again, $X\in \frg_1\cap J\frg_1$, and a contradiction is attained.

This concludes the analysis of this case and shows that it is not a valid one.

\medskip

\noindent
$\bullet$ \ \textsc{Study of} dim\,$(\frg_3\cap J\frg_1)=1$.

\smallskip\noindent
Under this hypothesis, we have that $JX_k\notin\frg_3$ for every $2\leq k \leq n-2$. We next divide the study according to the dimension of $\frg_3$.  By \eqref{serie1}, simply recall that dim\,$\frg_3\geq n+1$.

\smallskip
\begin{addmargin}[1.5em]{0em}
$\sqbullet$ \ Let dim\,$\frg_3> n+1$.
In this case, it is clear that $X_n\in\frg_3$ so it is possible to fix the unknown real brackets in
\eqref{brackets-centro-2}:
$$\begin{array}{rcl}
[X_n,JX_k] &=& \sum_{i=1}^{n-2}c_i^k\,X_i+c_{n-1}^k\,X_{n-1}+C_{n-1}^k\,JX_{n-1}, \qquad k=2,\ldots,n-2,\,n, \\[4pt]
[JX_k,JX_n] &=& -\sum_{i=1}^{n-2}c_i^k\,JX_i+C_{n-1}^k\,X_{n-1}-c_{n-1}^k\,JX_{n-1}, \qquad k=2,\ldots,n-2.
\end{array}$$
If there is some $k\in\{2,\ldots,n-2\}$ such that $c_i^k=0$, for every $1\leq i \leq n-2$,
then $JX_k\in\frg_3$. However, this is not possible by the hypothesis of this case. Thus, we can ensure
$(c_1^k,\ldots,c_{n-2}^k)\neq (0,\ldots,0)$, for every $2\leq k \leq n-2$. Let us note that this
implies $JX_n\notin\frg_3$. Hence,
$$\frg_1=\langle X_1,\ldots,X_{n-2}\rangle, \quad
  \frg_2=\langle X_1,\ldots,X_{n-1},JX_{n-1}\rangle, \quad
  \frg_3=\langle X_1,\ldots,X_{n},JX_1,JX_{n-1}\rangle,$$
so in particular, dim\,$\frg_3 = n+2$. We move to the study $\frg_4$ for this case.

Due to the nilpotency of the Lie algebra, at least $JX_2$ (up to arrangement of generators) or $JX_n$ should belong to $\frg_4$.
If $JX_2\in\frg_4$, then in particular $[JX_2,JX_n]\in\frg_3$, so $c_i^2=0$, for $2\leq i\leq n-2$.
The Jacobi identity $Jac\,(X_n,JX_n,JX_2)$ implies
$$0 \ = \
-c_1^2\,(b_{n-1}^1\,X_{n-1}+\,B_{n-1}^1\,JX_{n-1})-2\,
\sum\nolimits_{i=1}^{n-2}\left(c_{n-1}^2\,\gamma_i+C_{n-1}^2\,\alpha_i\right)X_i.
$$
Since $(b_{n-1}^1,B_{n-1}^1)\neq (0,0)$, necessarily $c_1^2=0$. Nonetheless, this yields $JX_2\in\frg_3$, which is not allowed. As a consequence, we are led to consider
$$\frg_4 = \langle X_1,\ldots,X_{n},JX_1,JX_{n-1}, JX_n\rangle.$$ However, as $[JX_k,JX_n]\in\frg_3$, one has that $c_i^k=0$, for every $2\leq i,k \leq n-2$, but this implies that  $JX_k\in\frg_4$, for $2\leq k \leq n-2$, which is a contradiction.
Hence, dim\,$\frg_3$ cannot be greater than $n+1$.


\medskip\noindent
$\sqbullet$ \ Let dim~$\frg_3 = n+1$, i.e.,
$$\frg_1=\langle X_1,\ldots,X_{n-2}\rangle, \quad
  \frg_2=\langle X_1,\ldots,X_{n-1},JX_{n-1}\rangle, \quad
  \frg_3=\langle X_1,\ldots,X_{n-1},JX_1,JX_{n-1}\rangle.$$
We will see that this is also an invalid case by carefully studying $\frg_4$.
Notice that due to the construction of the ascending central series, $\frg_4\cap J\frg_1\neq \{0\}$. We are going to consider two different cases according to the dimension of $\frg_4\cap J\frg_1$.

\smallskip
\begin{addmargin}[1.5em]{0em}
$\triangleright$ \
Assume $\dim (\frg_{4}\cap J\frg_{1})=1$. Since $JX_1\in\frg_3\subset\frg_4$, this implies that
$JX_k\notin\frg_4$, for every $2\leq k \leq n-2$. Then, $\frg_4\cap \langle X_n, JX_n\rangle\neq\{0\}$.
Observe that, without loss of generality, we can suppose $X_n\in\frg_4$ because the role of the elements $X_n$ and $JX_n$
in \eqref{brackets-centro-2} is interchangeable.
Moreover, it is easy to see that $JX_n\notin\frg_4$ (otherwise, $\frg_4=\frg$ which is a contradiction) and thus
$$\frg_4=\langle X_1,\ldots,X_{n},JX_1,JX_{n-1}\rangle.$$
Recall that the nilpotency of the algebra implies that the ascending central series finishes in $\frg$,
so there is $Y\in\frg_5$ such that $Y\notin\frg_4$. Necessarily, $Y$ is linearly dependent
with some $JX_2,\ldots,$ $JX_{n-2},$ $JX_n$. In fact, we can assume that, up to a change of
basis, $Y=JX$ for certain $X\in \mathcal W=\langle X_1,\ldots,X_n\rangle\subset\frg_4$.
Let us note that we are in the conditions of Lemma \ref{lemma1_serie_asc} \textrm{(ii)} with $r=4$.
Thus, there exists $Z\in\frg_3$ such that $JZ\in\frg_4$ and $JZ\notin\frg_3$. However, if we take
an element $Z=\sum_{i=1}^{n-1}s_i X_i+S_1 JX_1+S_{n-1} JX_{n-1}\in\frg_3$, then we can
see that $JZ\in\frg_4$ if and only if $JZ\in\frg_3$, which is a contradiction.

\smallskip\noindent
$\triangleright$ \ We next study $\dim (\frg_{4}\cap J\frg_{1})>1$.
Up to arrangement
of generators one can assume that $JX_2\in\frg_4$, so
$$\frg_4 \supseteq \langle X_1,\ldots,X_{n-1},JX_1,JX_2,JX_{n-1}\rangle.$$
Then, there should be an element $Y\in\frg$ such that $[JX_2,Y]\in\frg_3$ and $[JX_2,Y]\notin\frg_2$. Necessarily, $Y$ depends
on $X_n$ or $JX_n$ and it is possible to set the real brackets
$$\begin{array}{ccc}
[X_n,JX_2] &=& \sum_{i=1}^{n-2} c_i^2\,X_i+c_{n-1}^2\,X_{n-1}+C_1^2\,JX_1+C_{n-1}^2\,JX_{n-1},\\[4pt]
[JX_2,JX_n] &=& -\sum_{i=1}^{n-2} c_i^2\,JX_i-c_{n-1}^2\,JX_{n-1}+C_1^2\,X_1+C_{n-1}^2\,X_{n-1}.
\end{array}$$
Observe that $c_i^2=0$, for every $2\leq i \leq n-2$, because both brackets belong to $\frg_3$. Furthermore,
they cannot lie in $\frg_2$, so one has $(c_1^2,C_1^2)\neq (0,0)$. Now, imposing $Jac\,(X_n,JX_n,JX_2)$ the following system of equations must be solved:
$$\left\{\begin{array}{ccc}
0 &=& C_1^2\,B_{n-1}^1+c_1^2\,b_{n-1}^1,\\[3pt]
0 &=& C_1^2\,b_{n-1}^1-c_1^2\,B_{n-1}^1.
\end{array}\right.$$
Since $(c_1^2,C_1^2)\neq (0,0)$, and  $(b_{n-1}^1,B_{n-1}^1)\neq (0,0)$, the system has no solution.
\end{addmargin}
\medskip
This finishes the study of this case and the proof of Theorem~\ref{prop_centro}.
\end{addmargin}\end{proof}

\subsection{Some consequences and applications}\label{aplications}

In this section we point out some consequences of the results obtained in the previous sections, as well as
some applications to the study of existence of complex structures on certain products of nilpotent Lie algebras.

In six dimensions, if an NLA $\frg$ has a strongly non-nilpotent complex structure then its center has dimension 1
(see Theorem~\ref{structural-dim6}).
Alternatively, this result can also be obtained as a consequence of Proposition~\ref{dim-g} \textrm{(i)}.
In dimension 8, by Theorem~\ref{prop_centro} we directly have:

\begin{corollary}\label{corolario_centro_SnN}
Nilpotent Lie algebras of dimension eight admitting strongly non-nilpotent complex structures have
$1$-dimensional centers.
\end{corollary}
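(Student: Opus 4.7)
The plan is essentially a direct substitution into Theorem~\ref{prop_centro}. For an $8$-dimensional nilpotent Lie algebra $\frg$, we have $2n=8$, so $n=4$, which falls within the hypothesis $n\geq 4$ of Theorem~\ref{prop_centro}. If $J$ is a strongly non-nilpotent complex structure on $\frg$, then the theorem yields
\[
1\leq \dim \frg_1 \leq n-3 = 1,
\]
forcing $\dim \frg_1 = 1$.

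The only point that deserves a brief comment is the lower bound. The center of any nonzero nilpotent Lie algebra is nontrivial (this is standard, since the ascending central series is strictly increasing until it reaches $\frg$, and $\frg_1 = Z(\frg)$ is its first nonzero term), so $\dim \frg_1 \geq 1$ automatically. Alternatively, this lower bound is part of Proposition~\ref{dim-g}(i) applied with $k=1$, using that an SnN complex structure satisfies $\fra_1(J)=\{0\}$, which in dimension~$\leq 6$ was already discussed and in general guarantees that $\frg_1$ cannot be $J$-invariant but must still be nonzero.

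Since there is no additional content to prove beyond this substitution, no further step is required; the main obstacle (the upper bound $\dim \frg_1 \leq n-3$) was already overcome in the long case analysis of Theorem~\ref{prop_centro}. Thus the corollary is immediate.
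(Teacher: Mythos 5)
Your proof is correct and is exactly the paper's argument: the corollary is stated as an immediate consequence of Theorem~\ref{prop_centro} with $n=4$, giving $1\leq\dim\frg_1\leq n-3=1$. The remark on the lower bound (nontriviality of the center of a nilpotent Lie algebra) is a sensible, standard addition that the paper leaves implicit.
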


Note that in dimension 10, there are NLAs with 2-dimensional centers admitting SnN complex structures,
as Example~\ref{ejemplo10} illustrates.
Thus, in ten dimensions the upper bound in Theorem~\ref{prop_centro} is attained.

In \cite[Corollary 7]{U} it is proved that a given $6$-dimensional NLA can only admit
complex structures of either nilpotent or non-nilpotent type (recall Figure~\ref{diagrama_tipos_J}).
Moreover, in the second case the structure is necessarily SnN.
The next result shows that something similar happens in eight dimensions, in the sense that quasi-nilpotent
and strongly non-nilpotent complex structures cannot coexist on the same NLA.
Notice that a similar statement does not hold in dimension 10, as Example~\ref{ejemplo10} illustrates.

\begin{corollary}\label{corollary-estructura-acs}
Let $\frg$ be an NLA of dimension $8$ admitting complex structures. Then, all of them are either quasi-nilpotent
or strongly non-nilpotent.
\end{corollary}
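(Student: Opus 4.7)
The plan is to establish the dichotomy by showing that the presence of a single strongly non-nilpotent complex structure forces every other complex structure on $\frg$ to also be SnN, so that the two classes cannot coexist.

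First I would observe that if $\frg$ admits no SnN complex structure at all, the statement holds trivially: every complex structure on $\frg$ is quasi-nilpotent. So the content lies in the case where there exists at least one SnN complex structure $J$ on $\frg$. Here I would invoke Corollary~\ref{corolario_centro_SnN}, which guarantees that $\dim \frg_1 = 1$.

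Next I would take any other complex structure $\hat J$ on $\frg$ and analyse $\fra_1(\hat J)$. By its definition in \eqref{adapted-series}, $\fra_1(\hat J)$ is the largest $\hat J$-invariant subspace of $\frg_1$. The key elementary fact is that any $\hat J$-invariant subspace of a real vector space is necessarily even-dimensional, because the restriction of $\hat J$ to it would be a complex structure on that subspace. Since $\fra_1(\hat J) \subseteq \frg_1$ and $\dim \frg_1 = 1$, the only possibility is $\fra_1(\hat J) = \{0\}$, which means $\hat J$ is SnN by Definition~\ref{tipos_J}. Thus every complex structure on $\frg$ is forced to be of the same type as~$J$.

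I do not anticipate any genuine obstacle here, since the argument is almost immediate from the earlier deep result Theorem~\ref{prop_centro} (via Corollary~\ref{corolario_centro_SnN}) together with the parity observation about $\hat J$-invariant subspaces; the heavy lifting has already been done in Section~\ref{proof}. The only point to be a bit careful about is emphasising that the dichotomy is between \emph{quasi-nilpotent} and \emph{strongly non-nilpotent}, and therefore covers the nilpotent and weakly non-nilpotent subclasses on one side and the SnN class on the other, which is exactly what the proof above provides.
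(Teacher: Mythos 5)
Your proposal is correct and follows essentially the same route as the paper: both arguments combine Theorem~\ref{prop_centro} (via Corollary~\ref{corolario_centro_SnN}), which forces $\dim\frg_1=1$ in the presence of an SnN structure, with the observation that a quasi-nilpotent structure $\hat J$ would require the non-trivial $\hat J$-invariant (hence even-dimensional) subspace $\fra_1(\hat J)$ to sit inside the one-dimensional center. The only cosmetic difference is that you phrase it as ``every other structure is forced to be SnN'' while the paper phrases it as a contradiction showing the two types cannot coexist.
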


\begin{proof}
On the one hand, if $\frg$ admits an SnN complex structure~$J$, then by Theorem~\ref{prop_centro}
the center~$\frg_1$ has dimension~1. On the other hand, if~$J'$ is a quasi-nilpotent complex structure on~$\frg$, then by Definition~\ref{tipos_J} we have
$\{0\}\neq\fra_1(J')\subseteq \frg_1$. Since~$\fra_1(J')$ has dimension at least~2, the
center~$\frg_1$ must have dimension~$\geq 2$. This is a contradiction,
so~$J$ and~$J'$ cannot coexist on~$\frg$.
\end{proof}

It is a difficult problem to decide if complex structures exist or not on a given product Lie algebra (see for example the
recent paper \cite{CS} about complex structures on six-dimensional
product Lie algebras).
As a consequence of Theorem~\ref{prop_centro} we have the following result for complex structures on certain products of NLAs.

\begin{corollary}\label{product-1}
Let $\frg$ be an NLA of dimension $2n$, with $n \geq 4$, admitting a complex structure $J$.
If~$\frg$ is the product of $n-2$ Lie algebras, i.e. $\frg\cong \frh_1\times\cdots\times\frh_{n-2}$, then
$J$ is quasi-nilpotent.
\end{corollary}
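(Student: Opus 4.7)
The plan is to argue by contradiction, combining an elementary lower bound on the dimension of the center of a product Lie algebra with the upper bound for SnN structures provided by Theorem~\ref{prop_centro}.

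First, I would observe that for any (finite) product of Lie algebras, the center decomposes as a direct sum of the centers of the factors. Concretely, if $\frg\cong \frh_1\times\cdots\times\frh_{n-2}$, then $Z(\frg)=Z(\frh_1)\oplus\cdots\oplus Z(\frh_{n-2})$. Since each factor $\frh_i$ is a (non-trivial) nilpotent Lie algebra, its center is non-zero, so $\dim Z(\frh_i)\geq 1$ for every $i=1,\ldots,n-2$. Summing over the $n-2$ factors yields
\[
\dim\frg_1 \;=\; \dim Z(\frg) \;\geq\; n-2.
\]

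Next, I would suppose for contradiction that the complex structure $J$ on $\frg$ is not quasi-nilpotent, i.e.\ that $\fra_1(J)=\{0\}$, so $J$ is strongly non-nilpotent. Since $n\geq 4$, Theorem~\ref{prop_centro} applies and forces
\[
1\leq \dim\frg_1 \leq n-3,
\]
in direct contradiction with the inequality $\dim\frg_1\geq n-2$ established above. Therefore $J$ must satisfy $\fra_1(J)\neq\{0\}$, which by Definition~\ref{tipos_J} means $J$ is quasi-nilpotent.

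There is no real obstacle here: the only non-trivial input is Theorem~\ref{prop_centro}, and the remaining ingredient is just the standard computation of the center of a product. The proof is essentially a one-line contrapositive application of that theorem, once the center bound for products is recorded.
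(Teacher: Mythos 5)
Your proof is correct and follows essentially the same route as the paper: the paper likewise notes that each factor, being a non-trivial nilpotent Lie algebra, contributes at least one dimension to the center, so $\dim\frg_1\geq n-2$, and then invokes Theorem~\ref{prop_centro} to rule out the SnN case. No issues.
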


\begin{proof}
Since the center of $\frh_j$, $1 \leq j \leq n-2$, has dimension at least 1, we have that $\dim \frg_1 \geq n-2$.
By Theorem~\ref{prop_centro}, we know that if $J$ is SnN then $\dim \frg_1 \leq n-3$.
Consequently, $J$ can only be of quasi-nilpotent type.
\end{proof}

The previous corollary for $n=4$ gives a restriction on the type of complex structures that an 8-dimensional product nilpotent
Lie algebra can admit. In the following examples we illustrate how to make use of this result to study the existence of
complex structures on product Lie algebras.

\begin{example}\label{ex-product-1}
Let us consider the 8-dimensional Lie algebra $\frg=\frh\times \mathbb{R}$,
where $\frh$ is the nilpotent Lie algebra generated by $\{X_1,\ldots,X_7\}$ satisfying
$$
[X_1,X_2]=X_3, \ [X_1,X_3]=X_4, \ [X_1,X_4]=X_5, \ [X_2,X_3]=X_6, \ [X_1,X_5]=[X_2,X_6]=X_7.
$$
Let us denote by $X_8$ a generator of $\mathbb{R}$. It is easy to see that the NLA $\frg$
has ascending type $(2,4,5,6,8)$.

Suppose that $\frg$ admits a complex structure $J$.
By Corollary~\ref{product-1} we have that $J$ must be quasi-nilpotent.
Moreover, since the center of $\frg$ is given by
$$
\frg_1= \langle X_7,X_8 \rangle,
$$
it is clear that the first term in the ascending series of $\frg$ compatible with $J$ is $\fra_1(J)=\frg_1$.
Now, the argument at the end of Section~\ref{complex-structures}
implies that the quotient Lie algebra $\widetilde{\frg}_1=\frg/\fra_1(J)=\frg/\langle X_7,X_8 \rangle$ has a complex structure $\widetilde{J}_1$.
However, $\widetilde{\frg}_1$ is the 6-dimensional NLA generated by
$\{\widetilde{X}_1,\ldots,\widetilde{X}_6\}$ with
$$
[\widetilde{X}_1,\widetilde{X}_2]=\widetilde{X}_3, \ \ [\widetilde{X}_1,\widetilde{X}_3]=\widetilde{X}_4, \ \
[\widetilde{X}_1,\widetilde{X}_4]=\widetilde{X}_5, \ \ [\widetilde{X}_2,\widetilde{X}_3]=\widetilde{X}_6,
$$
and this NLA does not admit any complex structure, as proved in \cite{S}.

In conclusion, the 8-dimensional NLA $\frg=\frh\times \mathbb{R}$ cannot admit any complex structure.
\end{example}

\begin{example}\label{ex-product-2}
Let us consider the 8-dimensional Lie algebra $\frg=\frh_1\times \frh_2$,
where $\frh_1$ is the nilpotent Lie algebra of dimension 5 generated by $\{X_1,\ldots,X_5\}$ satisfying
$$
[X_1,X_2]=X_4, \ \ \ [X_1,X_4]=[X_2,X_3]=X_5,
$$
and $\frh_2$ is the nilpotent Lie algebra of dimension 3 given by $\{X_6,X_7,X_8\}$ with
$[X_6,X_7]=X_8$.

It is easy to see that $\frg$ has ascending type $(2,6,8)$ and the center of $\frg$ is exactly
$$
\frg_1= \langle X_5,X_8 \rangle.
$$
If the NLA $\frg$ admits a complex structure $J$, then
$J$ is necessarily quasi-nilpotent by Corollary~\ref{product-1}.
Moreover, since $\dim\frg_1=2$ we have that the first term in the ascending series of $\frg$ compatible with $J$ is $\fra_1(J)=\frg_1$.

By the last part of Section~\ref{complex-structures},
the quotient Lie algebra $\widetilde{\frg}_1=\frg/\fra_1(J)=\frg/\langle X_5,X_8 \rangle$
has a complex structure $\widetilde{J}_1$. Let
us observe that the 6-dimensional NLA $\widetilde{\frg}_1$ is generated by
$\{\widetilde{X}_1,\widetilde{X}_2,\widetilde{X}_3,\widetilde{X}_4,\widetilde{X}_6,\widetilde{X}_7\}$ satisfying
$$
[\widetilde{X}_1,\widetilde{X}_2]=\widetilde{X}_4.
$$
This Lie algebra is precisely
the product of the 3-dimensional Heisenberg Lie algebra and $\mathbb{R}^3$, so it has complex structures.
Indeed, it follows from \cite{COUV} that, up to isomorphism, the complex structure $\widetilde{J}_1$
on $\widetilde{\frg}_1$ is determined by
$$
\widetilde{J}_1 \widetilde{X}_1=-\widetilde{X}_2, \quad \widetilde{J}_1 \widetilde{X}_3=\widetilde{X}_4,
\quad \widetilde{J}_1 \widetilde{X}_6=-\widetilde{X}_7.
$$
Now, we can extend this complex structure to $\frg$ as follows. Define the almost complex structure $J$ on $\frg$ as
$$
J X_1=-X_2, \quad J X_3=X_4, \quad J X_6=-X_7, \quad J X_5=-X_8.
$$
A direct calculation shows that $J$ is integrable. Hence, the product Lie algebra $\frg=\frh_1\times \frh_2$ has a complex structure.
\end{example}

\section{The 8-dimensional case}\label{sec:8-dim}
\noindent
In this section we determine the structure of the ascending central series
for those 8-dimensional NLAs admitting strongly non-nilpotent complex structures.
More concretely, we prove the following structural theorem,
which is the analogous result in eight dimensions to Theorem~\ref{structural-dim6}.

\begin{theorem}\label{teorema-estructura-acs}
Let $\frg$ be an NLA of dimension $8$. If $\frg$ admits an SnN complex structure, then $\frg$ has ascending type
$(1,3,8)$, $(1,3,5,8)$, $(1,3,6,8)$, $(1,3,5,6,8)$, $(1,4,8)$, $(1,4,6,8)$, $(1,5,8)$, or $(1,5,6,8)$.
\end{theorem}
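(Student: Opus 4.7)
The approach is to combine the structural constraints of Section~\ref{SnN} with explicit constructive casework modelled on the proof of Theorem~\ref{prop_centro}.

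First, Corollary~\ref{corolario_centro_SnN} forces $\dim\frg_1=1$, and Corollary~\ref{corolario_smayor3} gives $s\geq 3$. Since $\dim\frg_1=1=n-3$, Proposition~\ref{dim-g}(iii) applied with $k=1$ and $r=3$ yields $\dim\frg_3\geq 4$ and $\frg_3\cap J\frg_3\neq\{0\}$. Together with the standard nilpotency bound $\dim\frg_{s-1}\leq 2n-2=6$ and the strict increase $1=\dim\frg_1<\dim\frg_2<\cdots<\dim\frg_{s-1}\leq 6$, this leaves only finitely many candidate ascending types; in particular the case $s=7$ would force $\dim\frg_3=3$, contradicting the lower bound, so $s\leq 6$, and the remaining candidates with $s=6$ reduce to $(1,2,4,5,6,8)$ and $(1,3,4,5,6,8)$.

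Second, Proposition~\ref{dim-g}(ii) with $k=1$ gives $2\leq\dim\frg_2\leq 5$. The key case to exclude is $\dim\frg_2=2$, which I would handle by contradiction. Corollary~\ref{k-k+1-sin-Js} first yields $\frg_2\cap J\frg_2=\{0\}$, after which I fix a $J$-adapted basis $\{X_i,JX_i\}_{i=1}^{4}$ with $\frg_1=\langle X_1\rangle$ and $\frg_2=\langle X_1,X_2\rangle$. Enumerating the possible memberships of $JX_1,JX_2,X_3,JX_3,X_4,JX_4$ in the successive terms $\frg_k$, the Nijenhuis condition expresses every bracket $[JX_k,JX_l]$ in terms of $[X_k,X_l]$ and the mixed brackets, and the Jacobi identity applied to carefully chosen triples then forces either $\frg_1\cap J\frg_1\neq\{0\}$ (contradicting the SnN hypothesis) or a dimensional inconsistency, exactly in the spirit of the argument used in the proof of Theorem~\ref{prop_centro}.

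Third, for each $\dim\frg_2\in\{3,4,5\}$, the doubly adapted basis of Definition~\ref{def-doubly-adapted} (introduced in Section~\ref{sec:g2}) provides an explicit description of $\frg_2$ in terms of well-chosen generators. Repeating the constructive casework at the level of $\frg_3$ (and of $\frg_4$ when $s\geq 5$), the intermediate dimensions not appearing in the statement are excluded one by one: specifically $\dim\frg_3=4$ when $\dim\frg_2=3$ and $\dim\frg_3=5$ when $\dim\frg_2=4$ (the values $\dim\frg_3=7$ and $\dim\frg_4=7$ are automatically impossible by the codim-2 bound on $\frg_{s-1}$). The surviving combinations are then precisely the eight ascending types in the conclusion, and this simultaneously rules out the two $s=6$ candidates above.

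The main obstacle is the constructive casework needed to discard $\dim\frg_2=2$ together with the two intermediate values of $\dim\frg_3$. Each such subcase requires a careful choice of $J$-adapted (or doubly adapted) basis, systematic propagation of the Nijenhuis constraints across all brackets, and the use of Jacobi on sufficiently many triples to detect the inconsistency, much as in the long proof of Theorem~\ref{prop_centro}, only now branched according to the value of $\dim\frg_2$.
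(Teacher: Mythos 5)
Your overall architecture coincides with the paper's: first $\dim\frg_1=1$ (Theorem~\ref{prop_centro}), then the exclusion of $\dim\frg_2=2$ (this is exactly the paper's Proposition~\ref{prop_g2_J}, proved by the same kind of bracket/Nijenhuis/Jacobi casework you describe), and finally the determination of $\frg_3$ and $\frg_4$ via doubly adapted bases for each value of $\dim\frg_2\in\{3,4,5\}$ (the paper's Lemmas~\ref{lema1}--\ref{lema3} and Propositions~\ref{serie-dimg23}--\ref{serie-dimg25}); your bookkeeping of which intermediate dimensions must be discarded is also correct. The one genuinely different ingredient is the bound on the nilpotency step. The paper caps $s\leq 5$ \emph{before} any casework by invoking the theorem of Garc\'{\i}a Vergnolle and Remm that $8$-dimensional quasi-filiform NLAs admit no complex structure, and this bound is then used structurally throughout: the proof of Proposition~\ref{prop_g2_J} branches into ``$4$-step'' and ``$5$-step'' cases, and the $\frg_4$-analyses repeatedly assume $\frg_5=\frg$. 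You instead derive $s\leq 6$ internally from $\dim\frg_3\geq 4$ and defer the elimination of the two $6$-step candidates $(1,2,4,5,6,8)$ and $(1,3,4,5,6,8)$ to the later casework. This buys a self-contained proof that does not rely on the quasi-filiform result, but it comes at a cost you do not acknowledge: every branch of the constructive analysis (in particular the $\dim\frg_2=2$ exclusion and the $\dim\frg_3=4$ exclusion for $\dim\frg_2=3$) must now also accommodate a possible sixth step, i.e.\ configurations in which $\frg_5\neq\frg$, and the computations as written in the paper do not literally cover those extra subcases. This is not a fatal flaw---the additional branches appear tractable by the same methods---but it is real extra work that your plan silently folds into ``the surviving combinations are then precisely the eight ascending types,'' so you should either carry out those $s=6$ subcases explicitly or simply import the quasi-filiform non-existence result as the paper does.
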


For the proof of this theorem, in Section~\ref{sec:g2} we start studying the term $\frg_2$
of the ascending central series $\{\frg_k\}_k$,
and we devote Section~\ref{sec:g34} to the remaining terms $\frg_k$, $k \geq 3$.
It is important to remark that,
since our method is constructive, we will be able to describe the complex structure equations that parametrize
all the SnN complex structures in dimension~8. This is detailed in the last Section~\ref{SnN-equations-dim8}.

\smallskip
Before beginning with the proof, we would like to
explain the main ideas on which the method is based.

\smallskip
We first observe that there are some general results in dimension $8$ that come
straightforward from Section~\ref{SnN}. More precisely, if
$\frg$ is an $8$-dimensional NLA with an SnN complex structure $J$, then:
\begin{equation}\label{resultados-iniciales}
\begin{cases}
\bullet \ \dim \frg_1=1 \text{ (see Theorem~\ref{prop_centro} and Corollary \ref{corolario_centro_SnN})}; \\
\bullet \ 2 \leq \dim \frg_2 \leq 5 \text{ (see Proposition \ref{dim-g}, (ii))}; \\
\bullet \ \frg_2 \cap J\frg_1=\{0\} \text{ (see Lemma \ref{lemma1_SnN})}; \\
\bullet \ \frg_3\cap J\frg_3\neq \{0\} \text{ and } \dim\frg_3\geq 4 \text{ (see Proposition \ref{dim-g}, (iii) with $k=1$, $r=3$)}.
\end{cases}
\end{equation}

\noindent Furthermore, thanks to Garc\'ia Vergnolle and Remm \cite{GVR},
we know that if an 8-dimensional NLA is quasi-filiform, i.e., its nilpotency step is 6, then it
does not admit any complex structure. This fact together
with Corollary~\ref{corolario_smayor3} allows us to conclude that an 8-dimensional NLA $\frg$
endowed with a strongly non-nilpotent complex structure has nilpotency step $3\leq s\leq 5$.
Consequently, $\dim \frg_1=1$ and $\frg_5=\frg$, so the study of the ascending central series
is reduced to the terms $\frg_2$, $\frg_3$, and $\frg_4$.
For this aim we will use the same ideas contained in the proof of Theorem~\ref{prop_centro}. That is,
we will construct a $J$-adapted basis $\mathcal B$ of~$\frg$ that will be also adapted to the nilpotent structure of $\frg$.
Let us begin the construction.

\smallskip
Since $\dim \frg_1=1$, we can establish
$\frg_1=\langle X_1\rangle$.
In this way, $X_1$ and $JX_1$ can be considered two elements in the $J$-adapted basis $\mathcal{B}$.
Moreover, it is clear that
$[X_1,\cdot\,]=0$ and $[JX_1,J\cdot \,]=J[JX_1,\cdot\,]$, as $X_1$ is a central element and $J$ is integrable, so
it satisfies the Nijenhuis condition \eqref{Nijenhuis}.

We now focus on $\frg_2$. By the nilpotency of $\frg$ and conditions \eqref{resultados-iniciales}, there is a vector
$Y\in\frg_2$ linearly independent with $X_1$ and $JX_1$.
Let us denote $X_{2}=Y$. Then,
$$\frg_1=\langle X_1\rangle,\quad \frg_2\supseteq \langle X_1, X_2\rangle.$$
Observe that $X_2$ and its image by $J$ can be taken as new elements of $\mathcal{B}$.  By construction, $[X_2,\,\cdot\,]\in\langle X_1\rangle$. Furthermore,
$[X_2,JX_1]=\lambda\,X_1$ and thus $[JX_1,JX_2]=-\lambda\,JX_1$, for
some $\lambda\in\mathbb{R}$, due to \eqref{Nijenhuis}. The nilpotency of the Lie algebra requires $\lambda=0$.
Hence,
\begin{equation}\label{inicio-brackets}
[X_1,\cdot \,]=0,\qquad  [X_2,JX_1]=0,\qquad [X_2,JX_2]=b_{22}^1X_1,\qquad [JX_1,JX_2]=0,
\end{equation}
where $b_{22}^1\in\mathbb R$.  This is the starting point of our construction.
We also introduce the following notation, that will be of great help in the next sections:

\begin{notation}\label{notation}
{\rm Let $\{X_i,JX_i\}_{i=1}^n$ be a $J$-adapted basis of $\frg$.
For each $1\leq i\leq n$, we denote $\mathcal V_i = \langle X_i, JX_i\rangle$.  In this way, as a vector space,~$\frg$ can be decomposed as direct sum of the $2$-dimensional $J$-invariant spaces~$\mathcal V_i$, that is,
$\frg=\bigoplus_{i=1}^n \mathcal V_n$.}
\end{notation}

\subsection{The term $\frg_2$}\label{sec:g2}
\noindent
In this section we provide the possible dimensions for the term $\frg_2$. Moreover, we give a description of this space
in terms of elements of the $J$-adapted basis $\mathcal B$.


\begin{proposition}\label{prop_g2_J}
Let $(\frg,J)$ be an $8$-dimensional NLA endowed with a strongly non-nilpotent complex structure.
Then, $\frg_{2}\cap J\frg_{2}\neq \{0\}$. As a consequence, $3\leq $ dim\,$\frg_2\leq 5$.
\end{proposition}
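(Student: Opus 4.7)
The plan is to argue by contradiction. Suppose $\frg_{2}\cap J\frg_{2}=\{0\}$. Proposition~\ref{dim-g}(i) with $k=2$ forces $\dim\frg_{2}\leq n-2=2$, and combined with the lower bound $\dim\frg_{2}\geq 2$ from~\eqref{resultados-iniciales} this gives $\dim\frg_{2}=2$ and $\frg_{2}=\langle X_{1},X_{2}\rangle$ with the brackets~\eqref{inicio-brackets}. Since $\frg_{3}\cap J\frg_{3}\neq\{0\}$ by~\eqref{resultados-iniciales} and $\frg_{2}\cap J\frg_{2}=\{0\}$, any $2$-dimensional $J$-invariant subspace of $\frg_{3}$ is disjoint from $\frg_{2}$; pick one and call it $\langle X_{3},JX_{3}\rangle$, enlarging the adapted basis. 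Lemma~\ref{lemma1_SnN}(i) applied with $k=2$ gives $\frg_{3}\cap J\frg_{2}=\{0\}$, so $JX_{1},JX_{2}\notin\frg_{3}$, hence $\frg_{3}\supseteq\langle X_{1},X_{2},X_{3},JX_{3}\rangle$, and Proposition~\ref{dim-g}(ii) yields $\dim\frg_{3}\in\{4,5\}$.

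First I would rule out $\dim\frg_{3}=5$. A $J$-adapted basis change of the form $\tilde X_{4}=X_{4}+\alpha JX_{1}+\beta JX_{2}$ absorbs any $\langle JX_{1},JX_{2}\rangle$-component of the extra generator of $\frg_{3}$ into $\mathcal V_{4}$, so we may assume $X_{4}\in\frg_{3}$ and $\frg_{3}=\langle X_{1},X_{2},X_{3},JX_{3},X_{4}\rangle$. But then $\mathcal W:=\langle X_{1},X_{2},X_{3},X_{4}\rangle\subset\frg_{3}$ is $4$-dimensional and satisfies $\mathcal W\cap J\mathcal W=\{0\}$, contradicting Proposition~\ref{corolario_step}(ii) with $k=2$. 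Hence $\dim\frg_{3}=4$ and $\frg_{3}=\langle X_{1},X_{2},X_{3},JX_{3}\rangle$.

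To rule out this remaining case, the key ingredient is the following Nijenhuis observation. Since $[X_{1},\cdot]=0$, the identity $N_{J}(X_{1},X_{3})=0$ reduces to $[JX_{1},JX_{3}]=J[JX_{1},X_{3}]$. Now $X_{3},JX_{3}\in\frg_{3}$ forces $[JX_{1},X_{3}],[JX_{1},JX_{3}]\in\frg_{2}$, while $J[JX_{1},X_{3}]\in J\frg_{2}$, so $[JX_{1},JX_{3}]\in\frg_{2}\cap J\frg_{2}=\{0\}$; this forces $[JX_{1},X_{3}]=[JX_{1},JX_{3}]=0$, and analogously $[JX_{2},X_{3}]=[JX_{2},JX_{3}]=0$. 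Combined with~\eqref{inicio-brackets}, this shows the only possibly non-zero brackets involving $JX_{1}$ or $JX_{2}$ are those with $X_{4}$ or $JX_{4}$, and Nijenhuis forces $[JX_{i},JX_{4}]=J[JX_{i},X_{4}]$ for $i=1,2$; since $JX_{1},JX_{2}\notin\frg_{1}$, each $\mathrm{ad}(JX_{i})$ is non-zero with a $J$-invariant $2$-dimensional image in $\frg$.

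From here I would perform a case analysis in the spirit of the proof of Theorem~\ref{prop_centro}, distinguishing which elements of $\{JX_{1},JX_{2},X_{4},JX_{4}\}$ lie in $\frg_{4}$. The same Nijenhuis trick shows that as soon as $JX_{i}\in\frg_{4}$ the image of $\mathrm{ad}(JX_{i})$ is forced into $\langle X_{3},JX_{3}\rangle$, whereas Lemma~\ref{lemma1_serie_asc}(ii) with $k=4$ applied to $\mathcal W=\langle X_{1},X_{2},X_{3},X_{4}\rangle\subset\frg_{4}$ (available when $X_{4}\in\frg_{4}$) forces $\frg_{4}\cap\langle JX_{1},JX_{2}\rangle\neq\{0\}$. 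Combining these constraints with the Jacobi identity on suitable triples such as $(X_{4},JX_{4},JX_{i})$ and $(JX_{1},JX_{2},X_{4})$, and with $s\leq 5$, each sub-case collapses to an inconsistency; the main obstacle is precisely this bookkeeping, as in the proof of Theorem~\ref{prop_centro}. Once $\frg_{2}\cap J\frg_{2}\neq\{0\}$ is established, the bound $3\leq\dim\frg_{2}\leq 5$ follows immediately: $\frg_{2}\cap J\frg_{2}$ is $J$-invariant and non-zero, hence of even dimension $\geq 2$, and $JX_{1}\notin\frg_{2}$ implies $X_{1}\notin\frg_{2}\cap J\frg_{2}$, so $\dim\frg_{2}\geq 1+2=3$, while $\dim\frg_{2}\leq 5$ is already in~\eqref{resultados-iniciales}.
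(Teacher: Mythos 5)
Your reduction to the single configuration $\frg_1=\langle X_1\rangle$, $\frg_2=\langle X_1,X_2\rangle$, $\frg_3=\langle X_1,X_2,X_3,JX_3\rangle$ is correct and follows the same route as the paper (your elimination of $\dim\frg_3=5$ via a $J$-adapted change of basis plus Proposition~\ref{corolario_step}(ii) is just a more explicit version of the paper's one-line appeal to that proposition, and your derivation of the bound $3\leq\dim\frg_2\leq 5$ at the end is fine). But two things go wrong afterwards. First, the claim that ``analogously $[JX_2,X_3]=[JX_2,JX_3]=0$'' is false: your argument for $JX_1$ uses that $X_1$ is central, so $N_J(X_1,X_3)=0$ collapses to $[JX_1,JX_3]=J[JX_1,X_3]$ with both sides forced into $\frg_2\cap J\frg_2=\{0\}$. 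For $X_2$, which is not central, one only gets $[JX_2,JX_3]-[X_2,X_3]=J\big([JX_2,X_3]+[X_2,JX_3]\big)\in\frg_2\cap J\frg_2=\{0\}$, i.e.\ $[JX_2,JX_3]=[X_2,X_3]$ and $[JX_2,X_3]=-[X_2,JX_3]$; these lie in $\langle X_1\rangle$ but need not vanish. In the paper's bracket table \eqref{bracket_22C} they are exactly the a priori nonzero constants $a_{23}^1$ and $b_{23}^1$, which are only killed later, and only in certain subcases, by Jacobi identities.

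Second, and more seriously, the decisive part of the argument is missing. Eliminating the configuration $\frg_3=\langle X_1,X_2,X_3,JX_3\rangle$ is the actual content of the proposition, and it does not collapse formally: the paper needs a full case analysis, splitting into the $4$-step and $5$-step cases, in the latter first proving $\frg_4\cap J\frg_2\neq\{0\}$ and then treating $\dim\frg_4=5$ (with two sub-cases according to which direction of $J\frg_2$ enters $\frg_4$) and $\dim\frg_4=6$, each branch terminating in a specific chain of Jacobi identities such as $Jac(X_3,JX_1,\cdot)$, $Jac(X_2,JX_1,\cdot)$, $Jac(X_3,JX_2,\cdot)$, $Jac(X_4,JX_4,\cdot)$ that in the $4$-step case ultimately forces $X_2$ into the center, a contradiction. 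Your proposal names some of the right triples but explicitly defers the entire computation (``the main obstacle is precisely this bookkeeping''), so the contradiction is asserted rather than derived. As written the proof is incomplete at exactly the point where the real work lies, and the erroneous vanishing of $[JX_2,X_3]$ and $[JX_2,JX_3]$ would moreover corrupt that computation if one tried to run it from your starting brackets.
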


\begin{proof}
We proceed by contradiction.
Let us suppose that $\frg_{2}\cap J\frg_{2}=\{0\}$.
From Proposition~\ref{dim-g}, part~\textrm{(i)} we have
$\dim \frg_{2}=2$, thus
$$\frg_{1}=\langle X_{1}\rangle, \quad \frg_{2}=\langle X_{1},X_{2}\rangle.$$
Moreover, by part \textrm{(i)} we know that
the nilpotency step $s$ of $\frg$ satisfies $s=4$ or $s=5$.

Let us now focus on~$\frg_3$.
By \eqref{resultados-iniciales} we know that $\frg_{3}\cap J\frg_{3}\neq\{0\}$.
As a consequence of Lemma~\ref{lemma1_SnN}~\textrm{(i)} with $k=2$,
we also have that
$\frg_3\cap J\frg_2=\{0\}$, so
we can ensure the existence of a $2$-dimensional subspace $\mathcal V_3=\langle X_3,JX_3\rangle$ contained in~$\frg_3$.
Furthermore, due to Proposition~\ref{corolario_step} \textrm{(ii)}
it is not possible to find another element in~$\frg_3$
linearly independent with
$\{X_{i}, JX_{i}\}_{i=1}^3$.
Therefore, the ascending central series comes as follows:
$$\frg_1=\langle X_1\rangle,\quad \frg_2=\langle X_1,X_2\rangle,\quad
    \frg_3=\langle X_1,X_2,X_{3},JX_{3}\rangle.$$
Observe that one can find a vector in $\frg$, that we will call $X_4$, such that
$\mathcal B=\{X_i, JX_i\}_{i=1}^4$ and $X_4, JX_4\notin\frg_3$.
With this information, it is possible to add some new Lie brackets to those in \eqref{inicio-brackets}:
$$[X_2, X_k]_{k=3,4} = a_{2k}^1 X_1,\qquad [X_2, JX_k]_{k=3,4} = b_{2k}^1 X_1,\qquad
[X_3, X_4] = a_{34}^1 X_1+a_{34}^2 X_2,$$
$$[X_3, JX_k]_{1\leq k \leq 4} = b_{3k}^1 X_1 + b_{3k}^2 X_2,\qquad [X_4, JX_3] = b_{43}^1 X_1 + b_{43}^2 X_2,$$
where the coefficients are real numbers.  Furthermore, $$[JX_1, JX_3] = J[JX_1, X_3] = -b_{31}^1JX_1 - b_{31}^2JX_2\in\frg_2,$$ so we conclude that $b_{31}^1=b_{31}^2 = 0$.  In a similar way, imposing the equations $N_J(X_2, X_3)$ and $N_J(X_3, X_4)$ given by \eqref{Nijenhuis}, we obtain
\begin{equation*}
\begin{array}{ll}
b_{32}^1 = b_{23}^1,\quad b_{32}^2=0,&\quad [JX_2, JX_3] = a_{23}^1 X_1, \\[3pt]
b_{43}^1 = b_{34}^1,\quad b_{43}^2 = b_{34}^2,&\quad  [JX_3, JX_4] = a_{34}^1 X_1 + a_{34}^2 X_2.
\end{array}
\end{equation*}
Up to this moment, we have the following real brackets:
\begin{equation}\label{bracket_22C}
\begin{array}{ll}
[X_1,\cdot\ ]=0,
  & [X_4, JX_k]_{k=1,2} \text{  unknown}, \\[2pt]
[X_2,X_k]_{k=3,4}=a_{2k}^1\,X_1,
  & [X_{4},JX_{3}]=b_{34}^1\,X_{1}+b_{34}^2\,X_{2}, \\[2pt]
[X_2,JX_1]=0,
  & [X_4, JX_4] \text{ unknown}, \\[2pt]
[X_2,JX_k]_{k=2,3,4}=b_{2k}^1\,X_1,
  & [JX_1,JX_k]_{k=2,3}=0, \\[2pt]
[X_{3},X_{4}]=a_{34}^1\,X_{1}+a_{34}^2\,X_{2},
  & [JX_1, JX_4] \text{ unknown}, \\[2pt]
[X_{3},JX_{1}]=0,
  & [JX_{2},JX_{3}]=a_{23}^1\,X_{1}, \\[2pt]
[X_{3},JX_{2}]=b_{23}^1\,X_{1},
  & [JX_2, JX_4] \text{ unknown}, \\[2pt]
[X_{3},JX_{k}]_{k=3,4}=b_{3k}^1\,X_{1}+b_{3k}^2\,X_{2}, \quad
  & [JX_{3},JX_{4}]=a_{34}^1\,X_{1}+a_{34}^2\,X_{2}.
\end{array}
\end{equation}
Moreover, for those brackets above involving $X_4$, we can write:
$$
[X_4, JX_k] = \sum\nolimits_{i=1}^3 b_{4k}^iX_i + \sum\nolimits_{j=1, j\neq k}^4 c_{4k}^j JX_j,\qquad k=1,2,4.
$$
Now, using the Nijenhuis condition and the nilpotency of $\frg$, we have:
\begin{equation*}
\begin{array}{ll}
b_{41}^1=0,&\qquad [JX_1, JX_4] = -\sum_{i=2}^3 b_{41}^i JX_i + \sum_{j=2}^4 c_{41}^j X_j,\\[3pt]
b_{42}^2=0, & \qquad [JX_2, JX_4] =  (a_{24}^1 + c_{42}^1) X_1 + c_{42}^3 X_3 + c_{42}^4 X_4 + (b_{24}^1 - b_{42}^1) JX_1 - b_{42}^3 JX_3.
\end{array}
\end{equation*}
Hence, the unknown brackets in~\eqref{bracket_22C} can be expressed as:
\begin{equation}\label{bracket_22C-2}
\begin{array}{l}
[X_4, JX_1] = \sum_{i=2}^3 b_{41}^iX_i + \sum_{j=2}^4 c_{41}^j JX_j,\qquad \qquad [X_4, JX_4] = \sum_{i=1}^3 b_{44}^iX_i + \sum_{j=1}^3 c_{44}^j JX_j,\\[4pt]
[X_4, JX_2] = \sum_{i=1, i\neq 2}^3 b_{42}^iX_i + \sum_{j=1, j\neq 2}^4 c_{42}^j JX_j,\quad [JX_1, JX_4] = -\sum_{i=2}^3 b_{41}^i JX_i + \sum_{j=2}^4 c_{41}^j X_j,\\[4pt]
[JX_2, JX_4] =  (a_{24}^1 + c_{42}^1) X_1 + c_{42}^3 X_3 + c_{42}^4 X_4 + (b_{24}^1 - b_{42}^1) JX_1 - b_{42}^3 JX_3.
\end{array}
\end{equation}
Observe that the position of $JX_k$ ($k=1,2,4$) in the ascending central series will immediately
make some of the coefficients above
equal to zero. Indeed, recall that $\frg$ can be either 4-step or 5-step nilpotent. We will next analyze these two possibilities.

\smallskip\noindent
$\bullet$ \ \textsc{The $4$-step case}.

\smallskip\noindent
To ensure that $\frg_4=\frg$, the following values in~\eqref{bracket_22C-2} are needed:
$$b_{24}^1 = b_{42}^1,\quad b_{41}^2 = c_{41}^2=c_{41}^4=c_{42}^1=c_{42}^4=c_{44}^1=c_{44}^2=0.$$  Note that the only (possibly) non-zero brackets involving $JX_1$ are $[X_4, JX_1] = b_{41}^3 X_3 + c_{41}^3 JX_3$
and $[JX_1,JX_4]=c_{41}^3\,X_3-b_{41}^3\,JX_3$.  Since $JX_1\notin \frg_1$, necessarily $(b_{41}^3, c_{41}^3)\neq (0,0)$.

Let us study the Jacobi identity to contradict this case. Considering $Jac(X_3,JX_1,Y)$, for $Y=X_4, JX_4$, we have:
$$
\left.\begin{array}{r}
c_{41}^3\big(b_{33}^1\,X_{1}+b_{33}^2\,X_{2}\big)=0,\\[2pt]
b_{41}^3\big(b_{33}^1\,X_{1}+b_{33}^2\,X_{2}\big)=0,
\end{array}\right\}
\text{ so we conclude } b^1_{33}=b^2_{33}=0.
$$
From $Jac\,(X_{2},JX_{1},Y)$, with $Y=X_4, JX_4$, we obtain
$$
\begin{pmatrix}
a^1_{23} & b^1_{23} \\
b^1_{23} & - a^1_{23}
\end{pmatrix}
\begin{pmatrix} b^3_{41} \\ c^3_{41} \end{pmatrix}
=
\begin{pmatrix} 0 \\ 0 \end{pmatrix},
\text{ and thus } a^1_{23}=b^1_{23}=0.
$$
Now, we observe that due to the previous choices we need $(a^2_{34}, b^2_{34})\neq (0,0)$, or otherwise $X_3\in\frg_2$.
If we compute $Jac(X_3,JX_2,Y)$, for $Y=X_4,JX_4$, we get
$$b^1_{22}\,a^2_{34} = b^1_{22}\,b^2_{34} = 0, \text{ which implies } b^1_{22}=0.$$
Finally, using $Jac(X_4,JX_4, Y)$ with $Y=X_3,JX_3$ one has:
$$
\begin{pmatrix}
b^1_{24} & -a^1_{24} \\
a^1_{24} & b^1_{24}
\end{pmatrix}
\begin{pmatrix} a^2_{34} \\ b^2_{34} \end{pmatrix}
=
\begin{pmatrix} 0 \\ 0 \end{pmatrix},
\text{ and necessarily } a^1_{24}=b^1_{24}=0.
$$
Nonetheless, replacing all these zero values in \eqref{bracket_22C}, one then realizes that $X_2$ belongs to the center of~$\frg$, which is a contradiction. Therefore,
this case does not hold and the algebra must be $5$-step nilpotent.

\smallskip\noindent
$\bullet$ \ \textsc{The $5$-step case}.

\smallskip\noindent
First, we observe that $\frg_4\cap J\frg_2\neq \{0\}$.  In fact, if we assume $\frg_4\cap J\frg_2=\{0\}$, then  $X_4\in\frg_4$ and we are in the conditions of Lemma~\ref{lemma1_serie_asc} \textrm{(ii)} for $r=4$.
This means that there should be a vector $W\in\frg_3$ such that $JW\in\frg_4$ but $JW\notin\frg_3$.  Due to the arrangement of the ascending central series,  the only possibility is having $W\in\frg_2$, which contradicts our assumption.

In addition, we remark that the nilpotency of $\frg$ yields dim\,$\frg_4 = 5$ or $6$.  We will next
see that none of these cases is valid, thus finishing the proof.

\smallskip\noindent
\begin{addmargin}[1.5em]{0em}
$\sqbullet$ \ Let dim\,$\frg_4 = 5$.  Then,
$$\frg_4=\langle X_1, X_2, X_3, JX_3, JY\rangle,\text{ where } Y\in\frg_2.$$
Let us express $Y=\alpha_1 X_1 + \alpha_2 X_2.$ After a suitable arrangement of generators, we can assume that either $(\alpha_1, \alpha_2) = (1,0)$ or $(\alpha_1, \alpha_2) = (0,1)$.  We analize these two possibilities in the next lines.

On the one hand, if $Y=X_1$ then
$$\frg_1=\langle X_1\rangle,\,\, \frg_2=\langle X_1,X_2\rangle,\,\,
    \frg_3=\langle X_1,X_2,X_{3},JX_{3}\rangle,\,\, \frg_4=\langle X_1,X_2,X_{3},JX_{1},JX_{3}\rangle, \,\, \frg_{5}=\frg.$$
This structure of the ascending central series forces the Lie brackets to follow~\eqref{bracket_22C} and~\eqref{bracket_22C-2} with $$b_{41}^2 = c_{41}^2 = c_{41}^4 = c_{42}^4  = c_{44}^2=0.$$
The contradiction is reached repeating the same calculations for the Jacobi identity performed in the case $\frg_4 = \frg$.

On the other hand, if
$$\frg_1=\langle X_1\rangle,\,\, \frg_2=\langle X_1,X_2\rangle,\,\,
    \frg_3=\langle X_1,X_2,X_{3},JX_{3}\rangle,\,\, \frg_4=\langle X_1,X_2,X_{3},JX_{2},JX_{3}\rangle, \,\, \frg_{5}=\frg,$$
a similar argument can be applied to obtain a contradiction after studying the Jacobi identities.  More concretely, imposing the vanishing of  $Jac(X_2, JX_2, Y)$ for $Y = X_4, JX_4$, $Jac(X_3, JX_2, Y)$ for $Y = X_4, JX_4$, and $Jac(X_4, JX_4, JX_1)$, we get $JX_1\in\frg_4$, which is not possible.

\smallskip\noindent
Due to the previous discussion, the case dim\,$\frg_4=5$ is not valid.

\smallskip\noindent
$\sqbullet$ \ Assume dim\,$\frg_4=6$. Repeating a similar argument, one can show that this case can neither hold. Indeed, it suffices to consider the cases $J\frg_2\subset \frg_4$ and its opposite, and impose the vanishing of the Jacobi identities in order to obtain contradictions with the assumed arrangement of the ascending central series.
\end{addmargin}
\vspace{-0.2cm}
\end{proof}

As a consequence of this result, we obtain the possible structure of the term $\frg_2$:

\begin{corollary}\label{posibles_g2}
Let $(\frg,J)$ be an $8$-dimensional NLA admitting a strongly non-nilpotent complex structure. Then,
$$\text{dim } \frg_{1}=1, \ \text{and}\ \
    \frg_{2}=\begin{cases}
    \frg_1\oplus \mathcal V_2, \\
    \frg_1\oplus \mathcal V_2\oplus \langle Y \rangle,\\
    \frg_1\oplus \mathcal V_2\oplus \mathcal V_3,
    \end{cases}$$
where $Y\notin J\frg_1$ and $\mathcal V_2,\,\mathcal V_3$ are $J$-invariant subspaces of dimension $2$ (see Notation~\ref{notation}).
\end{corollary}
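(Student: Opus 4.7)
The plan is to derive the three possible forms for $\frg_2$ by analyzing the dimension of the $J$-invariant intersection $\mathcal I := \frg_2 \cap J\frg_2$ together with the constraints already recorded.

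First I would collect the standing facts. By Theorem~\ref{prop_centro} we have $\dim\frg_1 = 1$, say $\frg_1 = \langle X_1\rangle$, and by Proposition~\ref{prop_g2_J} we have $3\leq \dim\frg_2\leq 5$ with $\mathcal I\neq\{0\}$. Applying $J$ to the relation $\frg_2\cap J\frg_1=\{0\}$ listed in \eqref{resultados-iniciales} yields $\frg_1\cap J\frg_2=\{0\}$, so in particular $\frg_1\cap \mathcal I=\{0\}$. Since $\mathcal I$ is $J$-invariant, $\dim\mathcal I$ is even, hence $\dim\mathcal I\in\{2,4\}$.

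Next I would split into cases. If $\dim\mathcal I=4$, then $\mathcal I$ decomposes as a direct sum $\mathcal V_2\oplus\mathcal V_3$ of two $J$-invariant $2$-dimensional subspaces. Because $\frg_1\oplus\mathcal I$ is already $5$-dimensional (as $\frg_1\cap\mathcal I=\{0\}$), the sub-case $\dim\frg_2=4$ would force $\frg_1\subseteq\mathcal I\subseteq J\frg_2$, contradicting $\frg_1\cap J\frg_2=\{0\}$. Thus $\dim\frg_2=5$ and $\frg_2=\frg_1\oplus\mathcal V_2\oplus\mathcal V_3$, which is the third form. If instead $\dim\mathcal I=2$, write $\mathcal I=\mathcal V_2=\langle X_2,JX_2\rangle$ and $\frg_2=\frg_1\oplus\mathcal V_2\oplus W$ with $\dim W=\dim\frg_2-3\in\{0,1,2\}$. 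For $\dim W=0$ and $\dim W=1$ we obtain directly the first two forms; in the second one, the condition $Y\notin J\frg_1$ follows immediately from $\frg_2\cap J\frg_1=\{0\}$.

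The main obstacle is to exclude the remaining possibility $\dim\mathcal I=2$ with $\dim W=2$ (i.e.\ $\dim\frg_2=5$). The strategy is to invoke Proposition~\ref{corolario_step}\textrm{(ii)} with $k=1$ and $n=4$, which prohibits $\frg_2$ from containing a $4$-dimensional subspace with trivial $J$-intersection. I would choose any basis $\{Y_1,Y_2\}$ of $W$ and consider $\mathcal W=\langle X_1,X_2,Y_1,Y_2\rangle\subset\frg_2$. Since $\mathcal W\cap J\mathcal W\subseteq \frg_2\cap J\frg_2=\mathcal I=\langle X_2,JX_2\rangle$, any candidate vector has the form $\alpha X_2+\beta JX_2$; the membership $\alpha X_2+\beta JX_2\in\mathcal W$ forces $\beta=0$ (because $JX_2$ is independent of $X_1,X_2,Y_1,Y_2$ in the basis $\{X_1,X_2,JX_2,Y_1,Y_2\}$ of $\frg_2$), while the membership $\alpha X_2\in J\mathcal W$ (equivalently $-\alpha JX_2\in\mathcal W$) forces $\alpha=0$. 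Hence $\mathcal W\cap J\mathcal W=\{0\}$, contradicting Proposition~\ref{corolario_step}\textrm{(ii)}. This contradiction rules out the stray case and completes the classification.
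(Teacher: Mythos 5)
Your proof is correct and is essentially the argument the paper leaves implicit: Corollary~\ref{posibles_g2} is stated there as an immediate consequence of Proposition~\ref{prop_g2_J}, and your case analysis on $\dim(\frg_{2}\cap J\frg_{2})\in\{2,4\}$ --- using $\frg_{2}\cap J\frg_{1}=\{0\}$ to split off $\frg_{1}$, and Proposition~\ref{corolario_step}~(ii) with $k=1$, $n=4$ to rule out $\dim\frg_{2}=5$ with only a $2$-dimensional $J$-invariant part --- is exactly the intended route using the paper's own toolkit. No gaps.
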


\subsection{The terms $\frg_3$ and $\frg_4$}\label{sec:g34}
\noindent
Once we have obtained the structure of $\frg_2$, we next describe the spaces~$\frg_3$ and~$\frg_4$ depending on the dimension of~$\frg_2$.  As a final result, we obtain a structural theorem for the ascending central series (see Theorem~\ref{teorema-estructura-acs}).

Let us remark that every case of Corollary \ref{posibles_g2} implies $\frg_1\oplus \mathcal V_2\subseteq\frg_2$.
Hence, we can set
$$\frg_1=\langle X_1\rangle,\quad \mathcal V_2 = \langle X_2, JX_2\rangle.$$
This determines a family $\{ X_i, JX_i \}_{i=1}^2$ that will be completed up to
a $J$-adapted basis $\mathcal B$ of the Lie algebra~${\mathfrak g}$.
In addition, the Lie brackets within the previous family satisfy:
$$[X_1,\cdot\ ]=0,\quad [X_2,\cdot\ ],\, [JX_{2},\cdot\ ] \in \langle X_1 \rangle.$$

\subsubsection{Study for dim\,$\frg_2 = 3$}

In this case, one clearly has $\frg_1\oplus \mathcal V_2=\frg_2$.  As a first observation for $\frg_3$, we prove the following result:

\begin{lemma}\label{lema1}
Let $(\frg,J)$ be an $8$-dimensional NLA endowed with an SnN complex structure. If
$\dim \frg_2=3$, then $\frg_3\neq \frg_{2}\oplus J\frg_{1}$.
\end{lemma}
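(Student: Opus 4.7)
The plan is to argue by contradiction, supposing $\frg_3 = \frg_2 \oplus J\frg_1 = \langle X_1, X_2, JX_2, JX_1\rangle$, and then to derive a contradiction from the interplay between the Nijenhuis condition, the Jacobi identity, and the strongly non-nilpotent hypothesis. The very first move is to exploit $N_J(X_1, Y) = 0$: since $X_1$ is central, this reduces to $[JX_1, JY] = J[JX_1, Y]$ for all $Y \in \frg$. Writing the generic bracket as $[JX_1, Y] = \alpha X_1 + \beta X_2 + \gamma JX_2 \in \frg_2$ (forced by $JX_1 \in \frg_3$), the equation $[JX_1, JY] = \alpha JX_1 + \beta JX_2 - \gamma X_2$ must also lie in $\frg_2$, and this immediately forces $\alpha = 0$. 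Hence $[JX_1, \frg] \subseteq \mathcal V_2 = \langle X_2, JX_2\rangle$.

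Next, I would identify the centralizer $\frh := \{Y \in \frg \mid [JX_1, Y] = 0\}$. By the previous step the image of $\operatorname{ad}(JX_1)$ is contained in the $2$-dimensional space $\mathcal V_2$, so $\dim \frh \geq 6$; moreover $\frh$ is $J$-invariant, since $[JX_1, Y] = 0$ implies $[JX_1, JY] = J[JX_1, Y] = 0$. Because $J$ is SnN and $\dim \frg_1 = 1$, $JX_1$ cannot be central, so $\frh \neq \frg$ and thus $\dim \frh = 6$. Consequently $\operatorname{ad}(JX_1)\colon \frg \to \mathcal V_2$ is surjective, $\frh = \frg_3 \oplus \mathcal V_3$ for some $J$-invariant complement $\mathcal V_3 = \langle X_3, JX_3\rangle$, and one may complete to a $J$-adapted basis with $\mathcal V_4 = \langle X_4, JX_4\rangle$ a $J$-invariant complement to $\frh$; by construction $[JX_1, X_4]$ and $[JX_1, JX_4] = J[JX_1, X_4]$ span $\mathcal V_2$.

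With this basis in hand, I would then systematically apply the Jacobi identity to triples of the form $(JX_1, Y, Z)$ with $Y \in \{X_3, JX_3, X_4, JX_4\}$ and $Z \in \{X_2, JX_2, X_4, JX_4\}$. Because every $[JX_1, \cdot]$ bracket lies in $\mathcal V_2 \subseteq \frg_2$ and every bracket of an element of $\frg_3$ with $JX_1$ vanishes (by \eqref{inicio-brackets} together with $[JX_1,JX_1] = 0$), these Jacobi relations reduce to linear equations in the coefficients of the brackets $[X_2, X_j]$, $[X_2, JX_j]$, $[JX_2, X_j]$, $[JX_2, JX_j]$ with $j = 3,4$, with the $(2\times 2)$ coefficient matrices having determinant $a^2 + b^2$ where $(a,b)$ parameterizes the nonzero vector $[JX_1, X_4]$. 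Nondegeneracy of these matrices forces $[\mathcal V_2, X_3] = [\mathcal V_2, JX_3] = 0$ and $b_{22}^1 = 0$ (hence $[X_2, JX_2] = 0$), together with analogous vanishings relating $[X_2, X_4], [JX_2, X_4], [X_2, JX_4], [JX_2, JX_4]$. Bifurcating on the nilpotency step $s \in \{4,5\}$ (recall $s \geq 4$ because $\dim \frg_3 = 4 \neq 8$ and $s \geq 3$ by Corollary~\ref{corolario_smayor3}, and $s \leq 5$ as noted in Section~\ref{sec:8-dim}), one then combines these vanishings with the remaining Jacobi identities involving $(X_4, JX_4, \cdot)$ to conclude that $X_2$ commutes with all of $\frg$, forcing $X_2 \in \frg_1$ and hence $\dim \frg_1 \geq 2$, which contradicts~\eqref{resultados-iniciales}.

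The main obstacle I expect is managing the case split according to the nilpotency step. When $s = 4$ the inclusion $[\frg, \frg] \subseteq \frg_3 \subseteq \frh$ makes the Jacobi identities collapse very cleanly; when $s = 5$ some brackets may have nonzero components along $\mathcal V_3$ or along $\mathcal V_4$, so the simplifications $[[Y,Z], JX_1] = 0$ no longer hold automatically and one must track the additional terms coming from $\operatorname{ad}(JX_1)$ applied to the $\mathcal V_4$-component of $[Y,Z]$. In that subcase the argument relies on further using that $\dim \frg_4 \in \{5,6\}$ (from Proposition~\ref{dim-g}(ii) applied after showing $\frg_4 \cap J\frg_4 \neq \{0\}$) together with Lemma~\ref{lemma1_serie_asc}\,\textrm{(ii)} to locate inside $\frg_4$ enough elements annihilated by $\operatorname{ad}(JX_1)$ so that the preceding reductions remain valid.
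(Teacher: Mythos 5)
Your opening moves are correct and give a genuinely different (and arguably cleaner) organizing principle than the paper's proof: the observation that $N_J(X_1,\cdot)=0$ forces $[JX_1,\frg]\subseteq\mathcal V_2$, hence that the centralizer $\frh=\ker\,\mathrm{ad}(JX_1)$ is a $6$-dimensional $J$-invariant subspace containing $\frg_3$, is sound. (The paper instead fixes $X_3\in\frg_4$ and case-splits on how $\mathcal V_3,\mathcal V_4$ meet $\frg_4$, carrying brackets such as $[X_3,JX_1]=b_{31}^2X_2+c_{31}^2JX_2$ that your choice of $\mathcal V_3\subset\frh$ kills from the start.) The $2\times 2$ systems with determinant $\pm(a^2+b^2)$ extracted from $Jac(JX_1,X_4,X_3)$ and $Jac(JX_1,X_4,JX_3)$ do give $[\mathcal V_2,\mathcal V_3]=0$ and $b_{22}^1=0$, and in the $5$-step case the $\mathcal V_2$-components of these identities additionally force $[\frg,\frg]\subseteq\frh$.

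Step 3, however, has two concrete gaps. First, the stated endgame fails: for the brackets of $\mathcal V_2$ with $\mathcal V_4$ the only available identity of your type is $Jac(JX_1,X_4,JX_4)$, which by antisymmetry yields a \emph{single} scalar relation $av+bu=0$ (writing $[X_2,X_4]=u\,X_1$, $[X_2,JX_4]=v\,X_1$, $[JX_1,X_4]=aX_2+bJX_2$), not an invertible $2\times 2$ system; so $[\mathcal V_2,\mathcal V_4]$ is not forced to vanish and $X_2$ is not forced into the center. The contradiction that actually materializes from your setup (I checked this for $s=4$) is a different one: the $\mathcal V_2$-components of $Jac(X_4,JX_4,X_3)$, $Jac(X_4,JX_4,JX_3)$ and $Jac(X_3,JX_3,X_4)$, again via the invertible matrix $\left(\begin{smallmatrix}a&-b\\ b&a\end{smallmatrix}\right)$, kill every $JX_1$-component of $[X_3,\cdot\,]$ and $[JX_3,\cdot\,]$, so that $\mathcal V_3\subseteq\frg_3$, contradicting $\dim\frg_3=4$. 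Second, the $5$-step case is not actually carried out: once $[Y,Z]$ may have components along $\mathcal V_3$, terms such as $[[JX_4,X_3],X_4]$ acquire nested contributions $\epsilon'[X_3,X_4]+\zeta'[JX_3,X_4]$ and the component-splitting no longer closes up by itself; this is exactly where the paper needs its further case distinction ($\dim\frg_4=5$ or $6$, $\mathcal V_3\subseteq\frg_4$ or not), and the one-sentence appeal to Lemma~\ref{lemma1_serie_asc}\,(ii) does not substitute for that analysis. The strategy is salvageable, but as written the proof is incomplete.
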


\begin{proof}
We argue by contradiction.  Let us suppose that  $\frg_3= \frg_{2}\oplus J\frg_{1}=\mathcal V_1\oplus \mathcal V_2$.  Then, we can assume $\mathcal V_3\cap \frg_4\neq \{0\}$, i.e., there exists an element $X_3$ in $\mathcal B$ belonging to $\frg_4$.  The ascending central series starts as follows:
$$\frg_1=\langle X_1\rangle,\quad \frg_2=\langle X_1,X_2,JX_{2}\rangle, \quad \frg_3=\langle X_1,X_{2},JX_{1},JX_{2}\rangle, \quad \frg_4\supseteq\langle X_1,X_{2}, X_3,JX_{1},JX_{2}\rangle.$$  Up to this moment, we have fixed six elements in $\mathcal B$, namely, $\{X_i, JX_i\}_{i=1}^3$.  Moreover, $\frg\setminus (\mathcal V_1\oplus \mathcal V_2\oplus \mathcal V_3)$ is a $J$-invariant space of dimension 2 that we will call $\mathcal V_4$. We will consider two different situations: $\mathcal V_4\cap \frg_4\neq \{0\}$ and $\mathcal V_4\cap \frg_4= \{0\}$.

\smallskip\noindent
$\bullet$ \ \textsc{Study of} $\mathcal V_4\cap \frg_4\neq \{0\}$.

\smallskip\noindent
Due to the previous hypothesis, there exists a new element in $\frg_4$ linearly independent with $\{X_i, JX_i\}_{i=1}^3$
that we will denote $X_4$. In this way,
$$\frg_4\supseteq\langle X_1,X_{2}, X_3, X_4, JX_{1},JX_{2}\rangle.$$
By Lemma~\ref{lemma1_serie_asc}, part \textrm{(i)} for $k=4$, we have $\frg_4=\frg$.  Hence, it is possible to express the Lie brackets as follows, making use of the Nihenjuis condition and the nilpotency of $\frg$:
\begin{equation}\label{bracket_JX2}
\begin{array}{ll}
[X_1,\cdot\  ]=0,&[X_{3},JX_{k}]_{k=3,4} = \sum_{i=1}^2 (b_{3k}^{i}\,X_{i}+c_{3k}^{i}\,JX_{i}),\\[3pt]
[X_2,X_k]_{k=3,4} =a_{2k}^1\,X_1, & [X_4, JX_1] = b_{41}^{2}\,X_{2}+c_{41}^{2}\,JX_{2},\\[3pt]
[X_2,JX_1]=0, & [X_{4},JX_{2}]=b_{24}^{1}\,X_{1},\\[3pt]
[X_2,JX_k]_{k=2,3,4} =b_{2k}^1\,X_1,& [X_{4},JX_{k}]_{k=3,4} = \sum_{i=1}^2 (b_{4k}^{i}\,X_{i}+c_{4k}^{i}\,JX_{i}),\\[3pt]
[X_3, X_4]=\sum_{i=1}^2 (a_{34}^{i}\,X_{i}+\alpha_{34}^{i}\,JX_{i}),& [JX_1,JX_2]=0, \\[3pt]
[X_3, JX_1] = b_{31}^{2}\,X_{2}+c_{31}^{2}\,JX_{2},&[JX_1,JX_k]_{k=3,4} = c_{k1}^{2}\,X_{2}-b_{k1}^{2}\,JX_{2},\\[3pt]
[X_{3},JX_{2}]=b_{23}^{1}\,X_{1},&[JX_{2},JX_{k}]_{k=3,4}=a_{2k}^{1}\,X_{1},\\[3pt]
\multicolumn{2}{l}{
[JX_3,JX_4] = \sum_{i=1}^2(a_{34}^{i}-c_{34}^{i}+c_{43}^{i})\,X_{i}+\sum_{i=1}^2(\alpha_{34}^{i}+b_{34}^{i}-b_{43}^{i})\,JX_{i}.
}
\end{array}
\end{equation}

The idea is now to impose the vanishing of the Jacobi identity \eqref{Jacobi} in order to get a contradiction.
We sketch how to proceed.  Consider $Jac(X_3, JX_3, Y)$ for $Y= X_4, JX_4$ and  $Jac(X_4, JX_4, Y)$ for $Y= X_3, JX_3$ to obtain the following homogeneous systems of equations:
$$A
\begin{pmatrix} b_{31}^2 \\ c_{31}^2 \end{pmatrix} =
\begin{pmatrix} 0 \\ 0 \end{pmatrix},\quad A
\begin{pmatrix} b_{41}^2 \\ c_{41}^2 \end{pmatrix} =
\begin{pmatrix} 0 \\ 0 \end{pmatrix},\quad \text{where}\quad A=\begin{pmatrix}c_{43}^1-c_{34}^1& b_{34}^1-b_{43}^1\\ -(b_{34}^1-b_{43}^1)&c_{43}^1-c_{34}^1\end{pmatrix}.$$
In order to preserve the arrangement of the ascending central series, the determinant of $A$ must be zero, i.e., $c_{43}^1 = c_{34}^1$ and $b_{43}^1 = b_{34}^1$.  With these values of the parameters, we go back to the previous four Jacobi identities and obtain:
$$B
\begin{pmatrix} b_{31}^2 \\ c_{31}^2 \end{pmatrix} = c_{33}^1
\begin{pmatrix} c_{41}^2 \\ b_{41}^2 \end{pmatrix},
\quad
B
\begin{pmatrix} c_{41}^2 \\ b_{41}^2 \end{pmatrix} = c_{44}^1
\begin{pmatrix} b_{31}^2 \\ c_{31}^2 \end{pmatrix},
\quad \text{where }
B=\begin{pmatrix}
\alpha_{34}^1 & c_{34}^1 \\ c_{34}^1 & -\alpha_{34}^1
\end{pmatrix}.$$
The contradiction appears here, since it is not possible to have determinant of $B$ either equal to zero or different from zero if we want to preserve the desired ascending central series.

\smallskip\noindent
$\bullet$ \ \textsc{Study of} $\mathcal V_4\cap \frg_4=\{0\}$.

\smallskip\noindent
First, let us note that $\frg_5=\frg$.
The proof will be finished after analyzing two cases: $\mathcal V_3\subset \frg_4$ and $\mathcal V_3\not\subset \frg_4$.

\smallskip\noindent
\begin{addmargin}[1.5em]{0em}
$\sqbullet$ \
Let us consider $\mathcal V_3\subset \frg_4$.  This is equivalent to $\frg_4=\frg_3\oplus \mathcal V_3$, so in particular
we observe that $\frg_4$ is $J$-invariant.
The Lie brackets are the same as in \eqref{bracket_JX2} with the only exception of
$$[X_4, JX_4] = b_{44}^1 X_1 + b_{44}^2 X_2 + b_{44}^3 X_3 + c_{44}^1 JX_1 + c_{44}^2 JX_2 + c_{44}^3  JX_3.$$
Let us remark that one needs $(b_{44}^3, c_{44}^3)\neq(0,0)$ in order to ensure $\frg_4\neq \frg$.  We again compute different
Jacobi identities to get a contradiction.  More concretely, focusing on the triplets
$(X_2, JX_1, Y)$, where $Y=X_3, X_4, JX_3, JX_4$,
we can conclude that $b_{22}^1=0$.  Furthermore, from
$(X_4, JX_4, Y)$ with $Y=X_2,JX_1, JX_2$,
 we obtain $a_{23}^1=b_{23}^1=b_{31}^2=c_{31}^2=0$.  Now, look at
$Jac(X_3, JX_3,Y)$, for $Y=X_4, JX_4$,
 to get $b_{33}^2=c_{33}^2=0$.  Finally, working again with
 $Jac(X_4, JX_4,Y)$, this time for $Y=X_3, JX_3$,
 we reach the desired contradiction.

\smallskip\noindent
$\sqbullet$ \ We now focus on the case $\mathcal V_3\not\subset \frg_4$, i.e., $\frg_4=\frg_3\oplus \langle X_3\rangle.$
We remark that the Lie brackets agree with~\eqref{bracket_JX2}, except for $[X_4, JX_4]$ (which coincides with that defined a
few lines above),  and $[X_4, JX_3]$, $[JX_3, JX_4]$ (which belong to $\frg_4$).  More precisely, let
$$[X_{4},JX_{3}]=b_{43}^1\,X_{1}+b_{43}^2\,X_{2}+b_{43}^3\,X_{3}+c_{43}^1\,JX_{1}+c_{43}^2\,JX_{2},$$
where $b_{43}^i, b_{43}^3, c_{43}^i\in\mathbb{R}$, for $i=1,2$. Applying the Nijenhuis condition,
one obtains
$$
[JX_{3},JX_{4}] = \sum_{i=1}^2(a_{34}^{i}-c_{34}^{i}+c_{43}^{i})\,X_{i}+\sum_{i=1}^2(\alpha_{34}^{i}+b_{34}^{i}-b_{43}^{i})\,JX_{i}-b_{43}^3\,JX_{3}.
$$
Since our Lie algebra $\frg$ is nilpotent, we necessarily have $b_{43}^{3}=0$. Nevertheless,
this choice makes $JX_{3}\in\frg_{4}$, which is not allowed.
\end{addmargin}
\smallskip
This completes the proof of the lemma.
\end{proof}


\begin{lemma}\label{lema2}
Let $(\frg,J)$ be an $8$-dimensional NLA endowed with an SnN complex structure.
If $\dim \frg_2=3$ and $\frg_3\cap J\frg_{1}\neq \{0\}$,
then the ascending central series of $\frg$ satisfies:
$$\text{dim }\frg_1 = 1,\quad \frg_2 = \frg_1\oplus\mathcal V_2,\quad \frg_3\text{ is $J$-invariant with dim } \frg_3= 6 \text{ or }8,$$ being $\mathcal V_2$ a $2$-dimensional $J$-invariant space.
\end{lemma}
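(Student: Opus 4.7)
The first two assertions, $\dim\frg_1=1$ and $\frg_2=\frg_1\oplus\mathcal V_2$, are immediate consequences of \eqref{resultados-iniciales} and Corollary~\ref{posibles_g2} under the hypothesis $\dim\frg_2=3$, so the real content is the description of $\frg_3$. I fix $\frg_1=\langle X_1\rangle$ and $\mathcal V_2=\langle X_2,JX_2\rangle$; the extra hypothesis $\frg_3\cap J\frg_1\neq\{0\}$ forces $JX_1\in\frg_3$, and consequently $\mathcal V_1\oplus\mathcal V_2\subseteq\frg_3$. By Lemma~\ref{lema1} this inclusion is strict, so $\dim\frg_3\geq 5$. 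The key reduction is that any $J$-invariant real subspace inherits a complex structure and hence has even real dimension; thus, once $J$-invariance of $\frg_3$ is established, the bounds $5\leq\dim\frg_3\leq 8$ force $\dim\frg_3\in\{6,8\}$ automatically.

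My plan is therefore to prove $J$-invariance of $\frg_3$ by contradiction. Assuming there is $X\in\frg_3$ with $JX\notin\frg_3$, I use that $\mathcal V_1\oplus\mathcal V_2$ is already $J$-invariant to subtract its component from $X$ and pick $X_3\in\frg_3\setminus(\mathcal V_1\oplus\mathcal V_2)$ with $JX_3\notin\frg_3$, and then complete $\{X_i,JX_i\}_{i=1}^3$ to a $J$-adapted basis $\mathcal B=\{X_i,JX_i\}_{i=1}^4$ by a suitable choice of $X_4$. Following the method of Theorem~\ref{prop_centro} and Lemma~\ref{lema1}, I would write down a general Ansatz for every bracket among the basis elements and reduce the free parameters using three inputs: (i) the inclusion $[X_3,\cdot\,]\subseteq\frg_2$ coming from $X_3\in\frg_3$; (ii) the Nijenhuis condition \eqref{Nijenhuis}, which ties $[JX_i,JX_j]$ to brackets with one less $J$; and (iii) the nilpotency of $\frg$, which discards any component of a bracket that would place a generator in a lower term of $\{\frg_k\}_k$ than prescribed.

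The decisive step is imposing the Jacobi identity \eqref{Jacobi} on triples adapted to the SnN hypothesis. I would first treat $\operatorname{Jac}(X_3,JX_3,X_4)$, $\operatorname{Jac}(X_3,JX_3,JX_4)$, $\operatorname{Jac}(X_3,JX_1,X_4)$ and $\operatorname{Jac}(X_3,JX_1,JX_4)$, each of which produces a homogeneous linear system in the remaining structure constants. I would then split into subcases according to $\dim\frg_3\in\{5,6,7\}$, and within the $\dim\frg_3=6$ case according to which of $JX_3,X_4,JX_4$ belongs to $\frg_3$; in each subcase the linear systems should force either a non-zero element in $\frg_1\cap J\frg_1$ (contradicting $\fra_1(J)=\{0\}$), or a collapse that places $X_2$ in the center or $JX_1$ in $\frg_2$, contradicting the assumed ascending series structure.

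The main obstacle will be the bookkeeping in the $\dim\frg_3=6$ subcase where $\frg_3=\mathcal V_1\oplus\mathcal V_2\oplus\langle X_3,X_4\rangle$ with $JX_3,JX_4\notin\frg_3$; there the Nijenhuis constraints leave considerably more freedom than in the odd-dimensional cases, and I expect to need auxiliary Jacobi identities of the form $\operatorname{Jac}(X_4,JX_4,Y)$ with $Y\in\{X_2,JX_1,JX_2\}$, together with a careful change of adapted basis inside $\langle X_3,X_4,JX_3,JX_4\rangle$, to expose a non-zero vector $W\in\frg_1\cap J\frg_1$. Once contradictions are reached in every subcase, $\frg_3$ is $J$-invariant, and the even-dimension argument above completes the proof.
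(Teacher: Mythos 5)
Your proposal follows essentially the same route as the paper's proof: reduce everything to showing that $\frg_3$ is $J$-invariant (parity then forces $\dim\frg_3=6$ or $8$, since $\mathcal V_1\oplus\mathcal V_2\subsetneq\frg_3$ gives $\dim\frg_3\geq 5$), argue by contradiction with a doubly adapted basis, and eliminate the non-$J$-invariant configurations by combining the Nijenhuis condition, nilpotency, and a chain of Jacobi identities until some generator is forced into the wrong term of the ascending series or a non-zero element appears in $\frg_1\cap J\frg_1$. The only differences are cosmetic: you keep $\dim\frg_3=7$ as an extra subcase to discard (the paper dismisses it outright) and you defer the explicit bracket computations, which the paper carries out and which do close exactly as you predict.
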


\begin{proof}
Making use of the hypothesis and Lemma~\ref{lema1}, we can find $X_1, X_2, X_3\in\frg$ such that:
$$\frg_{1}=\langle X_{1}\rangle,\quad \frg_{2}=\langle X_{1},X_{2},JX_{2}\rangle,\quad \frg_{3}\supseteq\langle X_{1},X_{2},X_{3},JX_{1},JX_{2}\rangle.$$
We consider $\{X_i, JX_i\}_{i=1}^3$ belonging to the $J$-adapted basis $\mathcal B$ and $X_4, JX_4$ generators of the $J$-invariant subspace $\mathcal V_4 =\frg\setminus (\mathcal V_1\oplus \mathcal V_2\oplus \mathcal V_3)$.
We can set the following brackets, where we have already applied the Nijenhuis condition and the nilpotency of $\frg$:
\begin{equation}\label{bracket_lema2}
\begin{array}{ll}
[X_1,\cdot\  ]=0,& [X_4, JX_1] = b_{41}^{2}\,X_{2}+c_{41}^{2}\,JX_{2},\\[2pt]
[X_2,X_k]_{k=3,4}=a_{2k}^1\,X_1,& [X_{4},JX_{2}]=b_{24}^{1}\,X_{1},\\[2pt]
[X_2,JX_1]=0, & [X_{4},JX_{k}]_{k=3,4} \text{ unknown},\\[2pt]
[X_2,JX_k]_{k=2,3,4}=b_{2k}^1\,X_1, &[JX_1,JX_2]=0, \\[2pt]
[X_3, X_4]=a_{34}^{1}\,X_{1}+a_{34}^{2}\,X_{2}+\alpha_{34}^{2}\,JX_{2},&[JX_1,JX_k]_{k=3,4}  = c_{k1}^{2}\,X_{2}-b_{k1}^{2}\,JX_{2}, \\[2pt]
[X_3, JX_1] =b_{31}^{2}\,X_{2}+c_{31}^{2}\,JX_{2},&[JX_{2},JX_{k}]_{k=3,4} =a_{2k}^{1}\,X_{1}, \\[2pt]
[X_{3},JX_{2}]=b_{23}^{1}\,X_{1},&[JX_{3},JX_{4}] \text{ unknown},\\[2pt]
[X_{3},JX_{k}]_{k=3,4} = b_{3k}^1 X_1 + b_{3k}^2 X_2  + c_{3k}^2 JX_2,&
\end{array}
\end{equation}
where all the coefficients are real numbers.

We should accomplish the study of~$\frg_3$.  The idea is the following one:  first, we will suppose that~$\frg_3$ is not $J$-invariant and contradict the assumption;  afterwards, we will show that valid solutions exist when~$\frg_3$ is $J$-invariant of dimension both 6 and~8.

\smallskip\noindent
$\bullet$ \ \textsc{Study when} $\frg_3$ \textsc{is not} $J$-\textsc{invariant}.

\smallskip\noindent
The possible dimensions for this space are 5 and 6. Let us separately analyze each case.

\smallskip\noindent
\begin{addmargin}[1.5em]{0em}
$\sqbullet$ \ Let dim\,$\frg_3=6$.  This implies that $\mathcal V_4\cap \frg_3\neq \{0\}$, and we can set: $$\frg_{3}=\langle X_{1},X_{2},X_{3},X_4, JX_{1},JX_{2}\rangle,\quad \frg_4 = \frg.$$
Hence, the unknown brackets in~\eqref{bracket_lema2} can be determined:
$$\begin{array}{l}
[X_{4},JX_{k}]_{k=3,4}=b_{4k}^{1}\,X_{1}+b_{4k}^{2}\,X_{2}+c_{4k}^{2}\,JX_{2}, \\[2pt]
[JX_{3},JX_{4}]=a_{34}^{1}\,X_{1}+(a_{34}^{2}-c_{34}^{2}+c_{43}^{2})\,X_{2}+(b_{34}^{1}-b_{43}^{1})\,JX_{1}+
(\alpha_{34}^{2}+b_{34}^{2}-b_{43}^{2})\,JX_{2}.
\end{array}$$
Observe that  $b_{43}^{1}\neq b_{34}^{1}$, or otherwise $\frg_3 = \frg$.  Computing the Jacobi identities
$Jac\,(JX_{3},JX_{4},Y)$ with $Y=X_3, X_4$,
we are led to consider $b_{31}^{2}=b_{41}^{2}=c_{31}^{2}=c_{41}^{2}=0$, but then one observes that indeed $JX_{1}\in\frg_{1}$, which is a contradiction.

\smallskip\noindent
$\sqbullet$ \ We now assume dim\,$\frg_3=5$. As $J\frg_3\neq\frg_3$, we have
$$\frg_{3}=\langle X_{1},X_{2},X_{3},JX_{1},JX_{2}\rangle,$$
and we need to turn our attention to $\frg_4$. Two different situations arise: $\mathcal V_4\cap \frg_4\neq\{0\}$
and $\mathcal V_4\cap \frg_4=\{0\}$.

\smallskip
\begin{addmargin}[1.5em]{0em}
$\triangleright$ \
If $\mathcal V_4\cap \frg_4\neq\{0\}$, the unknown real Lie brackets can be set as follows:

\medskip
\begin{center}
$\begin{array}{l}
[X_{4},JX_{3}]=b_{43}^{1}\,X_{1}+b_{43}^{2}\,X_{2}+c_{43}^{1}\,JX_{1}+c_{43}^{2}\,JX_{2},\\[2pt]
[X_{4},JX_{4}]=b_{44}^{1}\,X_{1}+b_{44}^{2}\,X_{2}+b_{44}^{3}\,X_{3}+c_{44}^{1}\,JX_{1}+c_{44}^{2}\,JX_{2},\\[2pt]
[JX_{3},JX_{4}]=(a_{34}^{1}+c_{43}^{1})\,X_{1}+(a_{34}^{2}-c_{34}^{2}+c_{43}^{2})\,X_{2}+
    (b_{34}^{1}-b_{43}^{1})\,JX_{1}  +(\alpha_{34}^{2}+b_{34}^{2}-b_{43}^{2})\,JX_{2}.\end{array}$
\end{center}

\medskip\noindent
Observe that one necessarily has $\frg_4=\frg$.
The contradiction arises from the Jacobi identity.  More concretely, imposing the vanishing of
$Jac\,(X_{2},JX_{1},Y)$, for $Y= X_3, JX_3, X_4, JX_4$, one obtains $b_{22}^{1}=0$, in order to ensure that $JX_1\notin \frg_1$. Next, if we compute $Jac\,(X_{3},JX_{3},Y)$ for $Y=X_4, JX_4$ and $Jac\,(X_{4},JX_{4},Z)$ for $Z=X_3, JX_1$, we must assume $b_{31}^{2}=c_{31}^{2}=0$ if we want to preserve the arrangement of the series.
In a similar way, $Jac\,(X_{3},JX_{1},Y)$ with $Y=X_4, JX_4$
imply $a_{23}^{1}=b_{23}^{1}=0$. Coming back to $Jac\,(X_{3},JX_{3},X_{4})$ and $Jac\,(X_{3},JX_{3},JX_{4})$,
one gets $b_{33}^{2}=c_{33}^{2}=0$.  Finally, taking into account the previous fixed values, the contradiction comes from
$Jac(X_4, JX_4, JX_3)$.

\smallskip\noindent
$\triangleright$ \
If $\mathcal V_4\cap \frg_4=\{0\}$, the structure of the ascending central series is:
$$\frg_1=\langle X_1\rangle,\quad \frg_2=\langle X_1,X_2,JX_{2}\rangle,\quad
 \frg_3=\langle X_1,X_{2},X_{3},JX_{1},JX_{2}\rangle,$$
$$\frg_4=\langle X_1,X_{2},X_{3},JX_{1},JX_{2},JX_{3}\rangle, \quad \frg_{5}=\frg.$$
We note that the only different bracket between this case and the previous one is
$$[X_{4},JX_{4}]=b_{44}^{1}\,X_{1}+b_{44}^{2}\,X_{2}+b_{44}^{3}\,X_{3}
    +c_{44}^{1}\,JX_{1}+c_{44}^{2}\,JX_{2}+c_{44}^{3}\,JX_{3},$$
where $b_{44}^{i},c_{44}^{i}\in\mathbb{R}$, for $i=1,2,3$. For this reason, one needs $c_{44}^{3}\neq 0$
in order to effectively separate the two situations.
Repeating the same Jacobi computations as above, we conclude that this case is neither valid.
\end{addmargin}
\end{addmargin}
\noindent
This proves that ${\mathfrak g}_3$ must be $J$-invariant.

\smallskip\noindent
$\bullet$ \ \textsc{Study when} $\frg_3$ \textsc{is} $J$-\textsc{invariant}.

\smallskip\noindent
There are two possibilities for $\frg_{3}$: either
dim\,$\frg_{3}=6$ or dim\,$\frg_{3}=8$. In what follows, we will see that in both cases one can find an appropriate choice of the parameters that fulfills the Jacobi identity and preserves the desired ascending central series.

\smallskip\noindent
\begin{addmargin}[1.5em]{0em}
$\sqbullet$ \ If dim\,$\frg_3=6$, then
$$\frg_3=\langle X_1, X_2, X_3, JX_1, JX_2, JX_3\rangle,\quad \frg_4 = \frg.$$
The unknown brackets in~\eqref{bracket_lema2} can be expressed as:
\begin{equation*}
\begin{array}{l}
[X_{4},JX_{3}]=b_{34}^{1}\,X_{1}+b_{43}^{2}\,X_{2}+c_{43}^{2}\,JX_{2},\\[2pt]
[X_{4},JX_{4}]=b_{44}^{1}\,X_{1}+b_{44}^{2}\,X_{2}+b_{44}^{3}\,X_{3}+c_{44}^{1}\,JX_{1}+
   c_{44}^{2}\,JX_{2}+c_{44}^{3}\,JX_{3}, \\[2pt]
[JX_{3},JX_{4}]=a_{34}^{1}\,X_{1}+(a_{34}^{2}-c_{34}^{2}+c_{43}^{2})\,X_{2}+
  (\alpha_{34}^{2}+b_{34}^{2}-b_{43}^{2})\,JX_{2}.
\end{array}
\end{equation*}
From $Jac(X_2,JX_1,Y)$ for $Y=X_3, X_4, JX_3, JX_4$, we are able to conclude that $b^1_{22}=0$, since
$(b^2_{31},c^2_{31},b^2_{41},c^2_{41})\neq (0,0,0,0)$ to ensure $JX_1\notin\frg_1$. Then, calculating
$Jac(X_4, JX_4, Y)$ with $Y=X_2,JX_1,JX_2$, two systems of equations arise:
$$C
\begin{pmatrix} a^1_{23} \\ b^1_{23} \end{pmatrix}
=
\begin{pmatrix} 0 \\ 0 \end{pmatrix},
\quad
C
\begin{pmatrix} c^2_{31} \\ b^2_{31} \end{pmatrix}
=
\begin{pmatrix} 0 \\ 0 \end{pmatrix},
\text{ where }
C=\begin{pmatrix} b^3_{44} & c^3_{44} \\ c^3_{44} & - b^3_{44} \end{pmatrix}.
$$
Using $Jac(X_4,JX_4, Y)$ for $Y=X_3,JX_3$ when $(b^3_{44},c^3_{44})\neq(0,0)$, and
$Jac(X_4,JX_4,X_3)$, $Jac(X_3,JX_1,Y)$, $Jac(X_3,JX_3,Y)$, for $Y=X_4,JX_4$, when $(b^3_{44},c^3_{44})=(0,0)$,
one obtains
$$a_{23}^{1}=b_{23}^{1}=b_{31}^{2}=b_{33}^{2}=c_{31}^{2}=c_{33}^{2}=0.$$
We note that 19 structure constants are still undetermined. They must be chosen in such a way that the desired
ascending central series is preserved and the 3 equations given by the non-trivial Jacobi identities are fulfilled.
Here, we simply note that particular solutions satisfying the two previous
requirements can be found.

\smallskip\noindent
$\sqbullet$ \
If dim\,$\frg_3=8$, then $\frg_3=\frg$ and the unknown brackets in~\eqref{bracket_lema2} are given by:
\begin{equation*}
\begin{array}{l}
[X_{4},JX_{3}]=b_{34}^{1}\,X_{1}+b_{43}^{2}\,X_{2}+c_{43}^{2}\,JX_{2}, \\[2pt]
[X_{4},JX_{4}]=b_{44}^{1}\,X_{1}+b_{44}^{2}\,X_{2}+c_{44}^{2}\,JX_{2}, \\[2pt]
[JX_{3},JX_{4}]=a_{34}^{1}\,X_{1}+(a_{34}^{2}-c_{34}^{2}+c_{43}^{2})\,X_{2}+(\alpha_{34}^{2}+b_{34}^{2}-b_{43}^{2})\,JX_{2},
\end{array}
\end{equation*}
where $b_{44}^{1}, b_{4k}^{2}, c_{4k}^{2}\in\mathbb{R}$, for $k=1,3,4$.  Let us study the Jacobi identity \eqref{Jacobi}.  From
$Jac(X_2,JX_1,Y)$, for $Y=X_3,X_4,JX_3,JX_4$, we get:
$$b_{22}^{1}\,c_{31}^{2} = b_{22}^{1}\,c_{41}^{2} = b_{22}^{1}\,b_{31}^{2} = b_{22}^{1}\,b_{41}^{2} = 0.$$
Since $(b_{31}^{2},c_{31}^{2},b_{41}^{2},c_{41}^{2})\neq(0,0,0,0)$
is needed to ensure $JX_1\notin\frg_1$, one immediately concludes $b_{22}^{1}=0$.
The remaining Jacobi identities give
a quadratic system with 8 equations and 22 unknowns
that admits particular solutions fulfilling all the prerequisites.
\end{addmargin}

\smallskip\noindent
This concludes the proof of the lemma.
\end{proof}


\begin{lemma}\label{lema3}
Let $(\frg,J)$ be an $8$-dimensional NLA endowed with an SnN complex structure. If
$\dim \frg_2=3$ and $\frg_3\cap J\frg_1= \{0\}$, then the ascending central series of $\frg$ satisfies:
$$\text{dim }\frg_1 = 1,\quad \frg_2 = \frg_1\oplus\mathcal V_2, \quad \frg_3 = \frg_1\oplus\mathcal V_2\oplus\mathcal V_3, \quad \frg_4\text{ is $J$-invariant},$$ where $\mathcal V_2, \mathcal V_3$ are $2$-dimensional $J$-invariant spaces.
\end{lemma}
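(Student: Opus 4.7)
My plan is to proceed exactly as in Lemmas~\ref{lema1} and~\ref{lema2}: fix a $J$-adapted basis compatible with the hypothesis, write the most general Lie brackets that respect the position of each basis vector in the ascending central series, simplify using the Nijenhuis condition~\eqref{Nijenhuis}, and impose the Jacobi identity~\eqref{Jacobi} to discard every configuration other than the one claimed.

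The initial setup is direct. Proposition~\ref{prop_g2_J} and Corollary~\ref{posibles_g2} give $\frg_1 = \langle X_1 \rangle$ and $\frg_2 = \frg_1 \oplus \mathcal V_2$ with $\mathcal V_2 = \langle X_2, JX_2 \rangle$, so the brackets start as in~\eqref{inicio-brackets}. The hypothesis $\frg_3 \cap J\frg_1 = \{0\}$ forces $JX_1 \notin \frg_3$, and nilpotency together with Proposition~\ref{dim-g}(iii) gives $\dim \frg_3 \geq 4$; hence there is $X_3 \in \frg_3$ not lying in $\mathcal V_1 \oplus \mathcal V_2$, which I complete to a $J$-adapted basis $\{X_i, JX_i\}_{i=1}^{4}$ by choosing an extra pair $X_4,\,JX_4$.

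The heart of the argument is to show $\frg_3 = \frg_1 \oplus \mathcal V_2 \oplus \mathcal V_3$. Each alternative configuration, namely $\dim \frg_3 = 4$ with $\frg_3 = \langle X_1, X_2, X_3, JX_2\rangle$, $\dim \frg_3 = 5$ with $JX_3 \notin \frg_3$ (which forces an additional outside generator $X_4 \in \frg_3$), and $\dim \frg_3 \geq 6$, yields a precise list of candidate nonzero brackets involving $X_3$ and, where appropriate, $X_4$, with Nijenhuis linking the brackets along $J$. Imposing the Jacobi identity on triples of the form $(X_3, JX_1,\,\cdot\,)$, $(X_2, JX_1,\,\cdot\,)$, $(X_4, JX_1,\,\cdot\,)$, and $(X_4, JX_4,\,\cdot\,)$, just as in the proof of Lemma~\ref{lema2}, then forces enough structure constants to vanish that either $JX_1$ collapses into $\frg_1$ (violating the SnN condition) or $X_3$, respectively $X_4$, falls back into $\frg_2$, contradicting its assumed placement in $\frg_3$. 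This dimension-and-position case tree is the principal obstacle; it is essentially the same analysis carried out in Lemma~\ref{lema2}, but now run under the complementary hypothesis on $JX_1$, and one expects the delicate subcase to be $\dim \frg_3 = 5$ with $JX_3 \notin \frg_3$, where the extra basis vector $X_4$ enlarges the parameter space before Jacobi cuts it down.

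With $\frg_3$ pinned down, the remaining claim is that $\frg_4$ is $J$-invariant. Since $\frg_3 = \frg_1 \oplus \mathcal V_2 \oplus \mathcal V_3$ already contains $\mathcal V_2 \oplus \mathcal V_3$, $J$-invariance of $\frg_4$ amounts to $JX_1 \in \frg_4$, i.e.\ to excluding the configurations with $\dim \frg_4 = 7$ and those of dimension $6$ in which only one of $X_4,\,JX_4$ joins $\frg_3$ instead of $JX_1$. In each of these bad subcases I would write the brackets involving $X_4, JX_4$ subject to the constraints already derived on $\frg_3$, use Nijenhuis and nilpotency to reduce, and then invoke the Jacobi identity on triples mixing $JX_1$ with $X_4, JX_4$: because $X_1$ is central the brackets of $JX_1$ are very rigid, and these equations force the offending configurations to collapse. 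What survives is $\frg_4 = \mathcal V_1 \oplus \mathcal V_2 \oplus \mathcal V_3$ of dimension $6$ or $\frg_4 = \frg$, both $J$-invariant; as in Lemma~\ref{lema2}, one then exhibits explicit parameter values to show that both possibilities are realised, completing the proof.
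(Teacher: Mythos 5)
Your proposal is correct and follows essentially the same route as the paper: the paper's own proof also builds the doubly adapted basis $\{X_i,JX_i\}_{i=1}^4$, first shows $\mathcal V_4\cap\frg_3=\{0\}$ (your ``$\dim\frg_3=5$ with $JX_3\notin\frg_3$'' and ``$\dim\frg_3\geq 6$'' cases), then rejects $\dim\frg_3=4$ by splitting on $\mathcal V_4\cap\frg_4$, and finally exhibits solutions for the surviving configuration $\frg_3=\frg_1\oplus\mathcal V_2\oplus\mathcal V_3$ with $\frg_4$ equal to $\mathcal V_1\oplus\mathcal V_2\oplus\mathcal V_3$ or $\frg$. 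The only difference is organizational (the paper packages your second and third cases into the single claim $\mathcal V_4\cap\frg_3=\{0\}$), and both arguments rely on the same Nijenhuis/nilpotency/Jacobi eliminations.
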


\begin{proof}
By hypothesis, one can find $X_1, X_2, X_3\in\frg$ such that:
$$\frg_{1}=\langle X_{1}\rangle,\quad \frg_{2}=\langle X_{1},X_{2},JX_{2}\rangle,\quad \frg_{3}\supseteq\langle X_{1},X_{2},X_{3},JX_{2}\rangle.$$
Furthermore, we know that $JX_1\notin \frg_3$, so $\frg$ must be at least 4-step nilpotent.
Let us take $\{X_i, JX_i\}_{i=1}^3$ in the $J$-adapted basis $\mathcal B$ and
$X_4, JX_4$ generators of the $J$-invariant subspace
$\mathcal V_4 =\frg\setminus (\mathcal V_1\oplus \mathcal V_2\oplus \mathcal V_3)$.
The following real brackets can be set, bearing in mind the Nijenhuis condition and the nilpotency of $\frg$:
\begin{equation*}
\begin{array}{ll}
[X_1,\cdot\  ]=0,& [X_4, JX_1] \text{ unknown},\\[2pt]
[X_2,X_k]_{k=3,4}=a_{2k}^1\,X_1, & [X_{4},JX_{2}]=b_{24}^{1}\,X_{1},\\[2pt]
[X_2,JX_1]=0, & [X_{4},JX_{k}]_{k=3,4} \text{ unknown},\\[2pt]
[X_2,JX_k]_{k=2,3,4}=b_{2k}^1\,X_1, &[JX_1,JX_2]=0, \\[2pt]
[X_3, X_4]=a_{34}^{1}\,X_{1}+a_{34}^{2}\,X_{2}+\alpha_{34}^{2}\,JX_{2},&[JX_1,JX_3] = c_{31}^{2}\,X_{2}-b_{31}^{2}\,JX_{2}, \\[2pt]
[X_3, JX_1] =b_{31}^{2}\,X_{2}+c_{31}^{2}\,JX_{2},&[JX_1,JX_4] \text{ unknown}, \\[2pt]
[X_{3},JX_{2}]=b_{23}^{1}\,X_{1},&[JX_{2},JX_{k}]_{k=3,4} =a_{2k}^{1}\,X_{1}\\[2pt]
[X_{3},JX_{k}]_{k=3,4} = b_{3k}^1 X_1 + b_{3k}^2 X_2  + c_{3k}^2 JX_2,&[JX_{3},JX_{4}] \text{ unknown}.\\[2pt]    \end{array}
\end{equation*}

To prove the result it suffices to follow the next scheme. First, one checks by contradiction that $\mathcal V_4\cap~\frg_3=~\{0\}$.
As a consequence, one has $4\leq \text{\,dim\,}\frg_3\leq 5$.  Then, the 4-dimensional case is rejected
(studying $\mathcal V_4\cap \frg_4\neq\{0\}$ and $\mathcal V_4\cap \frg_4=\{0\}$), and finally one shows
that solutions exist for the 5-dimensional case.
This completes the study of every possibility, and thus our result is attained.
\end{proof}

As a consequence of the previous three Lemmas \ref{lema1}, \ref{lema2}, and \ref{lema3},
we reach the following structure result.


\begin{proposition}\label{serie-dimg23}
Let $(\frg,J)$ be an $8$-dimensional NLA endowed with an SnN complex structure. If
$\dim \frg_2=3$, then the ascending central series of $\frg$ satisfies:
$$\text{dim }\frg_1 = 1,\quad \frg_2 = \frg_1\oplus\mathcal V_2,$$ and one of the following possibilities:
\begin{itemize}
\item[(i)]  $\frg_{3}=\frg$;
\item[(ii)] $\frg_{3}=\mathcal V_1\oplus \mathcal V_2\oplus \mathcal V_3$, \ $\frg_{4}=\frg$;
\item[(iii)] $\frg_{3}=\frg_1\oplus \mathcal V_2\oplus \mathcal V_3$, \ $\frg_{4}=\frg$;
\item[(iv)] $\frg_{3}=\frg_1\oplus \mathcal V_2\oplus \mathcal V_3$, \
              $\frg_{4}=\mathcal V_1\oplus \mathcal V_2\oplus \mathcal V_3$, \ $\frg_{5}=\frg$,
\end{itemize}
where $\mathcal V_i$ are $2$-dimensional $J$-invariant spaces.
\end{proposition}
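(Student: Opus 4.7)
The plan is to package Lemmas~\ref{lema1}, \ref{lema2}, and \ref{lema3} with a short dimension/parity analysis of the remaining term $\frg_4$ in each branch. First, the identification $\frg_2 = \frg_1 \oplus \mathcal V_2$ is immediate from Corollary~\ref{posibles_g2}, since the only possibility listed there of dimension 3 is the first one. The rest of the argument will study $\frg_3$ via the dichotomy $\frg_3 \cap J\frg_1 \neq \{0\}$ versus $\frg_3 \cap J\frg_1 = \{0\}$.

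In the case $\frg_3 \cap J\frg_1 \neq \{0\}$, Lemma~\ref{lema2} guarantees that $\frg_3$ is $J$-invariant with $\dim \frg_3 \in \{6,8\}$. The value $\dim\frg_3 = 8$ gives (i). If $\dim\frg_3 = 6$, then $\frg_3$ is a $J$-invariant subspace containing $\frg_2 \oplus J\frg_1 = \mathcal V_1 \oplus \mathcal V_2$, and its $J$-invariant $2$-dimensional complement inside $\frg_3$ is a plane $\mathcal V_3$, yielding $\frg_3 = \mathcal V_1 \oplus \mathcal V_2 \oplus \mathcal V_3$. To obtain $\frg_4 = \frg$, I would pick any $Y \in \frg_4 \setminus \frg_3$ and write $Y = \alpha X_4 + \beta JX_4 + w$ with $w \in \frg_3$ and $(\alpha,\beta) \neq (0,0)$, where $\mathcal V_4 = \langle X_4, JX_4\rangle$ is a $J$-invariant complement of $\frg_3$. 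A short calculation using the invertibility of the matrix $\left(\begin{smallmatrix}\alpha & \beta\\ \beta & -\alpha\end{smallmatrix}\right)$ shows that $\mathcal W = \langle X_1, X_2, X_3, Y\rangle \subset \frg_4$ has $\dim \mathcal W = 4$ and $\mathcal W \cap J\mathcal W = \{0\}$. Since $\frg_3 = J\frg_3$, Lemma~\ref{lemma1_serie_asc}(i) applied with $k = 4$ forces $\frg_4 = \frg$, giving (ii).

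In the case $\frg_3 \cap J\frg_1 = \{0\}$, Lemma~\ref{lema3} gives $\frg_3 = \frg_1 \oplus \mathcal V_2 \oplus \mathcal V_3$ and the $J$-invariance of $\frg_4$. If $\frg_4 = \frg$, we obtain (iii). Otherwise $\frg_4$ is a proper $J$-invariant subspace strictly containing $\frg_3$; since $\dim \frg_4$ is even and bounded between $6$ and $7$, we must have $\dim\frg_4 = 6$. Being $J$-invariant and containing $\frg_1$, it also contains $J\frg_1$, so $\frg_4 \supseteq \mathcal V_1 \oplus \mathcal V_2 \oplus \mathcal V_3$, and a dimension count forces equality. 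The nilpotency of $\frg$, together with the bound $s \leq 5$ established before the proposition, then yields $\frg_5 = \frg$, which is exactly (iv).

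I expect the only non-routine step to be the verification that $\frg_4 = \frg$ in case (ii): this requires the linear-algebra argument above to construct the subspace $\mathcal W$ with $\mathcal W \cap J\mathcal W = \{0\}$ before Lemma~\ref{lemma1_serie_asc}(i) can be invoked. Everything else is an organization of the three preceding lemmas and elementary dimension/parity bookkeeping.
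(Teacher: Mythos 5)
Your proposal is correct and follows exactly the route the paper intends: the paper states this proposition with no written proof beyond ``as a consequence of Lemmas~\ref{lema1}, \ref{lema2}, and \ref{lema3},'' and your assembly of those lemmas, together with the parity/dimension bookkeeping for $\frg_4$, is precisely that consequence spelled out. The one step you flag as non-routine --- constructing $\mathcal W=\langle X_1,X_2,X_3,Y\rangle$ with $\mathcal W\cap J\mathcal W=\{0\}$ so that Lemma~\ref{lemma1_serie_asc}(i) yields $\frg_4=\frg$ in case (ii) --- is verified correctly and fills in a detail the paper leaves implicit.
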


\subsubsection{Study for dim\,$\frg_2 = 4$}

\noindent
After the analysis of dim\,$\frg_2 = 3$, let us suppose that dim~$\frg_2=4$.
As a consequence of Corollary \ref{posibles_g2}, we can find $X_1, X_2, X_3\in\frg$ such that:
$$\frg_{1}=\langle X_{1}\rangle,\quad  \frg_{2}=\langle X_{1},X_{2},X_{3},JX_{2}\rangle.$$
We consider $\{X_i, JX_i\}_{i=1}^3$ in the $J$-adapted basis $\mathcal B$ and denote $X_4, JX_4$ the
generators of the $J$-invariant subspace $\mathcal V_4 =\frg\setminus (\mathcal V_1\oplus \mathcal V_2\oplus \mathcal V_3)$.
The Lie brackets of $\frg$ have the following form:
\begin{equation}\label{bracket_dim45}
\begin{array}{lll}
[X_1,\cdot\  ]=0,&[X_3, JX_1] =0,& [X_{4},JX_{k}]_{k=3,4} \text{ unknown},\\[2pt]
[X_2,X_k]_{k=3,4}=a_{2k}^1\,X_1,& [X_{3},JX_{2}]=b_{23}^{1}\,X_{1},&[JX_1,JX_k]_{k=2,3}=0,\\[2pt]
[X_2,JX_1]=0, & [X_{3},JX_{k}]_{k=3,4} = b_{3k}^1 X_1, &[JX_1,JX_4] \text{ unknown},\\[2pt]
[X_2,JX_k]_{k=2,3,4}=b_{2k}^1\,X_1, &[X_4, JX_1] \text{ unknown},&[JX_{2},JX_{k}]_{k=3,4} =a_{2k}^{1}\,X_{1},\\[2pt]
[X_3, X_4]=a_{34}^{1}\,X_{1},& [X_{4},JX_{2}]=b_{24}^{1}\,X_{1},&[JX_{3},JX_{4}] \text{ unknown},   \end{array}
\end{equation}
where all the coefficients are real numbers.

\begin{proposition}\label{serie-dimg24}
Let $(\frg,J)$ be an $8$-dimensional NLA endowed with an SnN complex structure. If
$\dim \frg_2=4$, then the ascending central series of $\frg$ satisfies:
$$\text{dim }\frg_1 = 1,\quad \frg_2 = \frg_1\oplus\mathcal V_2\oplus\langle Y \rangle,\quad \frg_3 \text{ is } J\text{-invariant with dim } \frg_3= 6 \text{ or }8,$$ where $Y\notin J\frg_1$ and $\mathcal V_2$ is a $2$-dimensional $J$-invariant space.
\end{proposition}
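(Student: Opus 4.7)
The plan is to mirror the constructive strategy employed in Lemmas~\ref{lema1}, \ref{lema2}, and~\ref{lema3}: fix a $J$-adapted basis adapted to the partial data, parametrize the remaining brackets, enumerate the possible positions of~$\frg_3$ in the ascending series, and use the Nijenhuis condition together with the Jacobi identity to discard every configuration that is incompatible with $J$ being SnN.

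Extending $\{X_1, X_2, JX_2, X_3\}$ by generators $\{X_4, JX_4\}$ of a $J$-invariant complement $\mathcal V_4$, we obtain a $J$-adapted basis $\mathcal B=\{X_i, JX_i\}_{i=1}^4$, and the brackets already fixed are those in~\eqref{bracket_dim45}. The remaining brackets involve $X_4$, $JX_1$, $JX_3$, $JX_4$; their precise form depends on where each of these vectors sits in the ascending series. The key observation is that $J$-invariance of~$\frg_3$ is automatic once both $JX_1$ and $JX_3$ lie in~$\frg_3$, since $X_1, X_3$ are already in~$\frg_2 \subset \frg_3$. Combined with the evenness of the dimension of any $J$-invariant subspace, this forces $\frg_3 = \mathcal V_1 \oplus \mathcal V_2 \oplus \mathcal V_3$ (dimension $6$) or $\frg_3 = \frg$ (dimension $8$) whenever $\frg_3$ is $J$-invariant, so the crux of the proof is to establish this $J$-invariance.

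To that end, I would enumerate the non-$J$-invariant configurations compatible with $\frg_2 \subsetneq \frg_3$: namely, (a)~$\dim \frg_3 = 5$, with one extra generator among $\{JX_1, JX_3, X_4, JX_4\}$; (b)~$\dim \frg_3 = 6$ with the added pair being either one of the mixed pairs $\{JX_1, X_4\}$, $\{JX_1, JX_4\}$, $\{JX_3, X_4\}$, $\{JX_3, JX_4\}$, or $\mathcal V_4$ itself (which still leaves $JX_1$ or $JX_3$ outside $\frg_3$); and (c)~$\dim \frg_3 = 7$, with a single missing generator among the same four candidates. For each configuration I would determine the new brackets using the integrability condition~\eqref{Nijenhuis} together with the nilpotency of~$\frg$, and then impose suitable Jacobi identities such as $Jac\,(X_2, JX_1, \cdot)$, $Jac\,(X_3, JX_1, \cdot)$, $Jac\,(X_3, JX_3, \cdot)$, or $Jac\,(X_4, JX_4, \cdot)$, to reach a contradiction. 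As in Lemmas~\ref{lema1}--\ref{lema3}, the contradiction will typically take one of two forms: a coefficient required to be nonzero (so as to keep a basis vector out of a lower term of the ascending series) is forced to vanish, or $JX_1$ ends up in the center~$\frg_1$, violating the SnN condition.

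Finally, for the two $J$-invariant cases (dimensions $6$ and $8$), existence must be confirmed by exhibiting explicit structure constants satisfying the Jacobi equations and realising the prescribed ascending type. The main obstacle is the sheer amount of case analysis: each sub-case is individually a routine linear or quadratic check once the brackets are pinned down, but there are on the order of a dozen configurations, and some branch further according to auxiliary conditions such as whether $\mathcal V_4 \cap \frg_4$ is trivial. The saving grace is that the pattern of Jacobi manipulations developed in the preceding lemmas transfers directly, so no new technical ideas are required beyond patient bookkeeping.
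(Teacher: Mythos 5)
Your proposal follows essentially the same route as the paper's proof: assume $\frg_3$ is not $J$-invariant, enumerate the possible positions of $JX_1$, $JX_3$, $X_4$, $JX_4$ relative to $\frg_3$, pin down the remaining brackets via the Nijenhuis condition and nilpotency, and derive contradictions from Jacobi identities, before exhibiting solutions in the two $J$-invariant cases of dimension $6$ and $8$. The only cosmetic difference is that your case $\dim\frg_3=7$ is vacuous from the outset (the penultimate term of the ascending central series of an NLA has codimension at least $2$), so the paper restricts immediately to $\dim\frg_3\in\{5,6\}$ in the non-$J$-invariant situation.
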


\begin{proof}
As a consequence of the previous lines we directly focus on $\frg_3$.  We will first see by contradiction that $\frg_3$ must be $J$-invariant and then provide two valid arrangements of the ascending central series.

\smallskip\noindent
$\bullet$ \ \textsc{Study when} $\frg_3$ \textsc{is not} $J$\textsc{-invariant}.

\smallskip\noindent
Due to the nilpotency of $\frg$, one has $5\leq \text{dim } \frg_3\leq 6$.  We next analyze these two possibilities.

\smallskip\noindent
\begin{addmargin}[1.5em]{0em}
$\sqbullet$ \ Let dim\,$\frg_3=6$. As a consequence of our assumptions, one
directly has $\mathcal V_4\cap \frg_3\neq \{0\}$, i.e.,
$$\frg_1=\langle X_1\rangle, \qquad \frg_2=\langle X_1,X_2,X_3,JX_2\rangle, \qquad
  \frg_3\supseteq\langle X_1,X_2,X_3,X_4,JX_2\rangle.$$
 This arrangement together with the Nijenhuis condition allows us to fix the unknown real brackets in~\eqref{bracket_dim45}:
\begin{equation}\label{bracket_caso1121}
\begin{array}{l}
[X_4,JX_1]=b_{41}^2\,X_2+b_{41}^3\,X_3+c_{41}^2\,JX_2, \\[2pt]
[X_4,JX_3]=b_{43}^1\,X_1+b_{43}^2\,X_2+c_{43}^2\,JX_2, \\[2pt]
[X_4,JX_4]=b_{44}^1\,X_1+b_{44}^2\,X_2+b_{44}^3\,X_3+c_{44}^2\,JX_2, \\[2pt]
[JX_1,JX_4]=c_{41}^2\,X_2-b_{41}^2\,JX_2-b_{41}^3\,JX_3, \\[2pt]
[JX_3,JX_4]=a_{34}^1\,X_1+c_{43}^2\,X_2+(b_{34}^1-b_{43}^1)\,JX_1-b_{43}^2\,JX_2.
\end{array}
\end{equation}
Moreover, $\frg_3$ must contain a 4-dimensional $J$-invariant subspace.  After an arrangement of generators, we can ensure that one of the spaces $\mathcal V_i$, where $i=1,3,4$, lies in $\frg_3$.  We separately study each possibility, discarding all of them:

\smallskip\noindent
\begin{addmargin}[1.5em]{0em}
$\triangleright$ \ If $\mathcal V_4\subset\frg_3$, then we see from~\eqref{bracket_caso1121} that $\frg_3 = \frg$, which contradicts our assumption.

\smallskip\noindent
$\triangleright$ \ Let $\mathcal V_3\subset\frg_3$. The Lie brackets in~\eqref{bracket_caso1121} tell us that $b_{43}^1=b_{34}^1$
and $b_{41}^3\neq 0$ (to ensure $\frg_3\neq\frg$).  Moreover, from $Jac(X_4, JX_4, JX_1)$ we need $b_{43}^2 = c_{43}^2 = 0$,
but this choice makes $JX_3\in\frg_2$, which is a contradiction.

\smallskip\noindent
$\triangleright$ \ Suppose that $\mathcal V_1\subset\frg_3$. 
Then, from~\eqref{bracket_caso1121} we find $b^3_{41}=0$ and $b^1_{43} \neq b^1_{34}$.
Using $Jac(X_4, JX_4, JX_3)$ we also get $b_{41}^2=c_{41}^2=0$.  However,
these values lead to $JX_1\in\frg_1$, which is not possible.
\end{addmargin}

\smallskip\noindent
$\sqbullet$ \ Let dim\,$\frg_3=5$. Notice that $\mathcal V_4\cap \frg_3 = \{0\}$ (otherwise, one would apply Lemma~\ref{lemma1_serie_asc} \textrm{(ii)} with~$r=3$ and reach a contradiction), so $\frg_3$ must contain a 4-dimensional $J$-invariant subspace.  Similarly to the previous case, we can assume that one of the spaces $\mathcal V_1$ or $\mathcal V_3$ is contained in $\frg_3$.  Again, none of the situations is possible.

\smallskip\noindent
\begin{addmargin}[1.5em]{0em}
$\triangleright$ \ Let us suppose that $\mathcal V_1\subset\frg_3$.  The first terms of the ascending central series are
$$\frg_1=\langle X_1\rangle, \qquad \frg_2=\langle X_1,X_2,X_3,JX_2\rangle, \qquad
  \frg_3=\langle X_1,X_2,X_3, JX_1, JX_2\rangle.$$
 The unknown brackets in~\eqref{bracket_dim45} can be set as follows. The integrability of~$J$ and the
 nilpotency of~$\frg$ imply that
  $$\begin{array}{l}
 [X_4,JX_1]=b_{41}^2\,X_2+c_{41}^2\,JX_2,\\[2pt]
 [JX_1,JX_4]=c_{41}^2\,X_2-b_{41}^2\,JX_2.
 \end{array}$$
 In addition, if either $\mathcal V_4\cap \frg_4\neq\{0\}$ or $\mathcal V_4\cap \frg_4=\{0\}$, it is easy to see that
 $$\begin{array}{l}
 [X_4,JX_3]=b_{43}^1\,X_1+b_{43}^2\,X_2+c_{43}^1\,JX_1+c_{43}^2\,JX_2,\\[2pt]
 [X_4,JX_4]=b_{44}^1\,X_1+b_{44}^2\,X_2+b_{44}^3\,X_3+c_{44}^1\,JX_1+c_{44}^2\,JX_2 +c_{44}^3\,JX_3, \\[2pt]
 [JX_3,JX_4]=(a_{34}^1+c_{43}^1)\,X_1+c_{43}^2\,X_2+(b_{34}^1-b_{43}^1)\,JX_1-b_{43}^2\,JX_2,
 \end{array}$$
where $\frg$ is 4-step nilpotent if $c_{44}^3 = 0$ and 5-step nilpotent otherwise.  Imposing the vanishing of
$Jac(X_4, JX_4, JX_3)$, a contradiction is obtained.

\smallskip\noindent
$\triangleright$ \ Let us consider $\mathcal V_3\subset\frg_3$.  In this case, the arrangement of the ascending central series starts as follows:
$$\frg_1=\langle X_1\rangle, \qquad \frg_2=\langle X_1,X_2,X_3,JX_2\rangle, \qquad
  \frg_3=\langle X_1,X_2,X_3, JX_2, JX_3\rangle.$$
 The unknown brackets in~\eqref{bracket_dim45} can be determined as follows.
 Due to the Nijenhuis condition and the nilpotency of the algebra, one has:
  $$\begin{array}{l}
 [X_4,JX_3]=b_{34}^1\,X_1+b_{43}^2\,X_2+c_{43}^2\,JX_2,\\[2pt]
 [JX_3,JX_4]=a_{34}^1\,X_1+c_{43}^2\,X_2-b_{43}^2\,JX_2.
 \end{array}$$
 Furthermore, for both $\mathcal V_4\cap \frg_4\neq \{0\}$ and $\mathcal V_4\cap \frg_4=\{0\}$, it is possible to set
  $$\begin{array}{l}
 [X_4,JX_1]=b_{41}^2\,X_2+b_{41}^3\,X_3 + c_{41}^2\,JX_2+c_{41}^3\,JX_3,\\[2pt]
 [X_4,JX_4]=b_{44}^1\,X_1+b_{44}^2\,X_2+b_{44}^3\,X_3+c_{44}^1\,JX_1+c_{44}^2\,JX_2 +c_{44}^3\,JX_3, \\[2pt]
 [JX_1,JX_4]=c_{41}^2\,X_2+c_{41}^3\,X_3 - b_{41}^2\,JX_2-b_{41}^3\,JX_3,
 \end{array}$$
where $\frg$ is 4-step nilpotent if $c_{44}^1 = 0$ and 5-step nilpotent otherwise.
The vanishing of $Jac(X_4, JX_4, JX_1)$ generates a new contradiction.
\end{addmargin}
\end{addmargin}

Observe that we have rejected the case in which $\frg_3$ is not $J$-invariant.
Hence, we are led to consider the opposite situation.

\smallskip\noindent
$\bullet$ \ \textsc{Study when} $\frg_3$ \textsc{is} $J$\textsc{-invariant}.

\smallskip\noindent
One necessarily has
$\frg_3\supseteq \mathcal V_1\oplus \mathcal V_2\oplus \mathcal V_3$ and
\begin{equation}\label{bracket_caso11211}
\begin{array}{l}
[X_4,JX_1]=b_{41}^2\,X_2+c_{41}^2\,JX_2,\\[2pt]
[X_4,JX_3]=b_{34}^1\,X_1+b_{43}^2\,X_2+c_{43}^2\,JX_2, \\[2pt]
[X_4,JX_4]=b_{44}^1\,X_1+b_{44}^2\,X_2+b_{44}^3\,X_3+c_{44}^1\,JX_1+c_{44}^2\,JX_2+c_{44}^3\,JX_3, \\[2pt]
[JX_1,JX_4]=c_{41}^2\,X_2-b_{41}^2\,JX_2, \\[2pt]
 [JX_3,JX_4]=a_{34}^1\,X_1+c_{43}^2\,X_2-b_{43}^2\,JX_2,
\end{array}
\end{equation}
where $\frg$ is 4-step nilpotent if $c_{44}^1 = c_{44}^3=0$ and 5-step nilpotent otherwise.  From $Jac(X_2, JX_3, Y)$ and $Jac(X_3, JX_1, Y)$ for $Y=X_4, JX_4$, one obtains $a_{23}^1 = b_{22}^1 = b_{23}^1=0$ in order to preserve the desired arrangement of the series.  Furthermore, it is possible to give particular solutions to the remaining Jacobi identities, showing that our two cases are valid.
\end{proof}

\subsubsection{Study for dim\,$\frg_2 = 5$}
\noindent
We finally focus our attention on dim\,$\frg_2=5$. In this case it is possible to find $X_1, X_2, X_3\in\frg$ such that:
$$\frg_{1}=\langle X_{1}\rangle,\quad  \frg_{2}=\langle X_{1},X_{2},X_{3},JX_{2}, JX_3\rangle.$$
Let $\{X_i, JX_i\}_{i=1}^3$ belong to the $J$-adapted basis $\mathcal B$ and consider $X_4, JX_4$
generators of the $J$-invariant subspace $\mathcal V_4 =\frg\setminus (\mathcal V_1\oplus \mathcal V_2\oplus \mathcal V_3)$.
The Lie brackets of $\frg$ are given by:
\begin{equation}\label{bracket_dim5}
\begin{array}{lll}
[X_1,\cdot\  ]=0,
  & [X_{3},JX_{2}]=b_{23}^{1}\,X_{1},
  & [JX_1,JX_k]_{k=2,3}=0, \\[2pt]
[X_2,X_k]_{k=3,4}=a_{2k}^1\,X_1,
  & [X_{3},JX_{k}]_{k=3,4} = b_{3k}^1 X_1,
  & [JX_1,JX_4] \text{ unknown}, \\[2pt]
[X_2,JX_1]=0,
  & [X_4, JX_1] \text{ unknown},
  & [JX_{2},JX_{k}]_{k=3,4} =a_{2k}^{1}\,X_{1}, \\[2pt]
[X_2,JX_k]_{k=2,3,4}=b_{2k}^1\,X_1,
  & [X_{4},JX_{2}]=b_{24}^{1}\,X_{1},
  & [JX_{3},JX_{4}]=a_{34}^1 X_1,\\[2pt]
[X_3, X_4]=a_{34}^{1}\,X_{1},
  & [X_{4},JX_{3}]= b_{34}^1X_1,
  &\\[2pt]
[X_3, JX_1] =0,
  & [X_4,JX_4] \text{ unknown},
  &
\end{array}
\end{equation}
where all the coefficients are real numbers.

\begin{proposition}\label{serie-dimg25}
Let $(\frg,J)$ be an $8$-dimensional NLA endowed with an SnN complex structure. If
$\dim \frg_2=5$, then the ascending central series of $\frg$ satisfies:
$$\text{dim }\frg_1 = 1,\quad \frg_2 = \frg_1\oplus\mathcal V_2\oplus\mathcal V_3,\quad \frg_3 \text{ is } J\text{-invariant},$$ where $\mathcal V_i$ are $2$-dimensional $J$-invariant spaces.
\end{proposition}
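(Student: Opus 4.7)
The two initial equalities $\dim\frg_1=1$ and $\frg_2=\frg_1\oplus\mathcal V_2\oplus\mathcal V_3$ are built into the setup preceding the proposition. The real content is to prove that $\frg_3$ is $J$-invariant, and my plan follows the blueprint of Propositions~\ref{serie-dimg23} and~\ref{serie-dimg24}: argue by contradiction, enumerate the non-$J$-invariant shapes for $\frg_3$, and in each case use the Nijenhuis identity together with the Jacobi identity to force $JX_1\in\frg_2$, contradicting $\frg_2\cap J\frg_1=\{0\}$.

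Since every $J$-invariant subspace has even real dimension and $6\leq\dim\frg_3\leq 8$ by nilpotency, the case $\dim\frg_3=8$ is automatically $J$-invariant, so only $\dim\frg_3\in\{6,7\}$ need to be discarded as non-$J$-invariant. Up to exchanging $X_4\leftrightarrow JX_4$, the configurations I would rule out are: (A) $\dim\frg_3=6$ with $\frg_3=\frg_2\oplus\langle X_4\rangle$; (B) $\dim\frg_3=7$ with $\frg_3=\mathcal V_1\oplus\mathcal V_2\oplus\mathcal V_3\oplus\langle X_4\rangle$; and (C) $\dim\frg_3=7$ with $\frg_3=\frg_2\oplus\mathcal V_4$ (so $JX_1\notin\frg_3$). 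In each case I would complete the undetermined brackets $[X_4,JX_1]$, $[X_4,JX_4]$, $[JX_1,JX_4]$ in~\eqref{bracket_dim5} so as to respect the prescribed ascending type, using the Nijenhuis identity to couple $[JX_1,JX_4]$ with $[X_4,JX_1]$, and the nilpotency of $\frg$ to pin down the admissible shapes. The requirement $JX_1\notin\frg_2$ forces a specific coefficient pair appearing in $[X_4,JX_1]$ to be non-zero.

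The contradiction will then be extracted by imposing the Jacobi identity on short lists of triples tailored to each case, the most informative being $(X_4,JX_4,JX_1)$, $(X_3,JX_1,X_4)$, $(X_3,JX_1,JX_4)$, $(X_2,JX_1,X_4)$, $(X_2,JX_1,JX_4)$, and, where needed, $(X_4,JX_4,X_3)$ and $(X_4,JX_4,JX_3)$. Each such identity yields a homogeneous linear system on the non-zero parameters of $[X_4,JX_1]$ and $[JX_1,JX_4]$; preserving the prescribed ascending series forces the determinant to vanish, and successive Jacobi relations then collapse the remaining free constants so that $JX_1$ lies in $\frg_2$, the desired contradiction. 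The positive direction (that pairs $(\frg,J)$ realizing $\frg_3=\mathcal V_1\oplus\mathcal V_2\oplus\mathcal V_3$ or $\frg_3=\frg$ do exist) is then verified by exhibiting explicit parameter choices satisfying all the Jacobi identities, in analogy with the closing steps of Proposition~\ref{serie-dimg24}.

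The main obstacle will be case (C): with both $X_4$ and $JX_4$ inside $\frg_3$, additional unknowns enter the brackets $[X_4,JX_3]$ and $[JX_3,JX_4]$, and the resulting Jacobi systems couple more parameters than in (A) or (B). My plan there is to chain the argument, first using $Jac(X_2,JX_1,Y)$ for $Y\in\{X_4,JX_4\}$ together with $[X_2,JX_2]=b_{22}^{1}X_1$ to force $b_{22}^{1}=0$, then feeding this into $Jac(X_4,JX_4,JX_1)$ and $Jac(X_3,JX_1,Y)$ to conclude that all the coefficients keeping $JX_1$ out of $\frg_2$ must vanish.
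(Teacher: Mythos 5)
Your plan is a genuinely different route from the paper's, and a much heavier one. The paper observes that $\frg=\frg_2\oplus\mathcal V_4\oplus J\frg_1$ (dimensions $5+2+1$), so Lemma~\ref{lemma2_serie_asc} applies with $k=2$, $X=X_1$, $\mathcal V=\mathcal V_4$, giving $JX_1\in\frg_3$ at once; the Remark following that lemma then leaves only two possibilities, $\frg_3=\frg$ (if $\frg_3\cap\mathcal V_4\neq\{0\}$) or $\frg_3=\mathcal V_1\oplus\mathcal V_2\oplus\mathcal V_3$ with $\frg_4=\frg$ (if $\frg_3\cap\mathcal V_4=\{0\}$), both $J$-invariant. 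No case-by-case Jacobi analysis is needed for the exclusion step; the only computations in the paper are for the positive direction (exhibiting brackets realizing each case), which you do include. The special feature of $\dim\frg_2=5$ is precisely that the complement of $\frg_2\oplus\mathcal V_4$ is the line $\langle JX_1\rangle$, which is what makes Lemma~\ref{lemma2_serie_asc} bite; your blueprint, modelled on the $\dim\frg_2=3,4$ cases, does not exploit this.

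Beyond being laborious, your sketch has a concrete gap in the enumeration of non-$J$-invariant candidates for $\frg_3$. In dimension $6$ one can indeed normalize the extra generator to $X_4$ by rotating and shearing the $J$-adapted basis, but in dimension $7$ the added $2$-plane $P\subset\langle JX_1,X_4,JX_4\rangle$ need not be one of your (B) or (C): generically $P=\langle X_4,\,J(X_1+cX_4)\rangle$ with $c\neq 0$, and this cannot be reduced to $\langle X_4,JX_1\rangle$ without moving $X_1$ out of the center. So ``up to exchanging $X_4\leftrightarrow JX_4$'' does not exhaust the configurations, and the Jacobi computations themselves are only asserted, not carried out. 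Both issues disappear if you first establish $JX_1\in\frg_3$ via Lemma~\ref{lemma2_serie_asc}, which is the step your proposal is missing.
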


\begin{proof}
We observe that $\frg = \frg_2\oplus \mathcal V_4\oplus J\frg_1$, so we are in the conditions of Lemma~\ref{lemma2_serie_asc} with $k=2$.  As a consequence, we can directly  conclude that either $\frg_3 = \frg$, if $\mathcal V_4\cap \frg_3\neq \{0\}$,
or $\frg_4=\frg$, if $\mathcal V_4\cap \frg_3=\{0\}$.  In the second case, we notice that $\mathcal V_1\subset \frg_3$, which means $\frg_3 = \mathcal V_1\oplus\mathcal V_2\oplus\mathcal V_3$. Hence, in both cases $\frg_3$ is $J$-invariant.
To finish the proof, it suffices to show that the two situations are valid.

Let us first assume that $\mathcal V_4\cap \frg_3\neq \{0\}$.  Then,
$$\frg_{1}=\langle X_{1}\rangle,\quad \frg_{2}=\langle X_{1},X_{2}, X_3, JX_{2}, JX_3\rangle,\quad \frg_3=\frg,$$
and we can set all the unknown real brackets in~\eqref{bracket_dim5}:
\begin{equation}\label{bracket_caso1111}
\begin{array}{l}
[X_4,JX_1]=b_{41}^2\,X_2+b_{41}^3\,X_3+c_{41}^2\,JX_2+c_{41}^3\,JX_3,\\[2pt]
[X_4,JX_4]=b_{44}^1\,X_1+b_{44}^2\,X_2+b_{44}^3\,X_3+c_{44}^2\,JX_2+c_{44}^3\,JX_3,\\[2pt]
[JX_1,JX_4]=c_{41}^2\,X_2+c_{41}^3\,X_3-b_{41}^2\,JX_2-b_{41}^3\,JX_3.
\end{array}
\end{equation}
The Jacobi identity gives rise to a quadratic system with 9 equations and 17 (real) unknowns that
admit appropriate solutions.  Hence, this case leads to a first positive result.

We now study the opposite situation, namely, $\mathcal V_4\cap \frg_3=\{0\}$. Then,
$$\frg_1=\langle X_1\rangle, \quad \frg_2=\langle X_1,X_2,X_3,JX_2,JX_3\rangle,\quad \frg_3=\langle X_1,X_2,X_3,JX_1,JX_2,JX_3\rangle, \quad \frg_4=\frg,$$ and we have:
\begin{equation*}
\begin{array}{l}
[X_4,JX_1]=b_{41}^2\,X_2+b_{41}^3\,X_3+c_{41}^2\,JX_2+c_{41}^3\,JX_3,\\[2pt]
[X_4,JX_4]=b_{44}^1\,X_1+b_{44}^2\,X_2+b_{44}^3\,X_3+c_{44}^1\,JX_1+c_{44}^2\,JX_2+c_{44}^3\,JX_3,\\[2pt]
[JX_1,JX_4]=c_{41}^2\,X_2+c_{41}^3\,X_3-b_{41}^2\,JX_2-b_{41}^3\,JX_3.
\end{array}
\end{equation*}
In particular, notice that $c_{44}^1\neq 0$ or otherwise, we would go back to the first case.
Also here it is possible to find a solution to the Jacobi identities preserving the arrangement of the ascending central series.
\end{proof}


As a consequence of the previous results, we have found every $8$-dimensional NLA $\frg$ admitting an SnN
complex structure $J$. Indeed, we have provided a $J$-adapted basis of each $\frg$ additionally adapted
to the structure of the ascending central series $\{\frg_k\}_k$. This fact motivates the following definition.

\begin{definition}\label{def-doubly-adapted}
Let $\frg$ be an $s$-step nilpotent Lie algebra of dimension $2n$ endowed with an almost-complex structure $J$.
A $J$-adapted basis $\mathcal B=\{X_k,JX_k\}_{k=1}^n$ of $\frg$ will be called \emph{doubly adapted}
if there is a permutation $\mathcal B^{\sigma}=\{V_{1},\ldots,V_{2n}\}$ of the elements of $\mathcal B$
such that $\mathcal B^{\sigma}$ is
adapted to the ascending central series $\{\frg_{k}\}_{k}$, i.e.,
$$\frg_{1}=\langle V_{1},\ldots,V_{m_{1}}\rangle\subset
    \frg_{2}=\langle V_{1},\ldots,V_{m_{1}},V_{m_{1}+1},\ldots,V_{m_{2}}\rangle\subset\cdots\subset
    \frg_{s}=\frg=\langle V_{1},\ldots,V_{2n}\rangle,$$
being $m_{k}=\dim \frg_{k}$.
\end{definition}

\subsection{Complex structure equations}\label{SnN-equations-dim8}
In the previous section, we have completely determined those NLAs $\frg$ of dimension eight
admitting an SnN complex structure $J$. Here, we describe the complex structure equations
for every such pair $(\frg,J)$.

Let $\frg$ be an NLA of dimension $8$ endowed with a strongly non-nilpotent complex structure.
Let $\mathcal B=\{X_{1}, X_2, X_3, X_4, JX_1, JX_2, JX_3, JX_4 \}$ be the doubly adapted
basis that we have constructed in each case of Section~\ref{sec:g34}, in this specific order.
Consider its dual basis $\mathcal B^*=\{e^1,\ldots, e^8\}$.
We recall that the differential of any element $e\in\frg^*$ can be computed using the Lie brackets of $\frg$
by means of the well-known formula
$$de(X,Y)=-e([X,Y]),$$
where $X,Y\in\frg$.
Since $J$ is a complex structure, one can then construct the $(1,0)$-basis
\begin{equation}\label{base-eta}
\omega^1=e^4-ie^8,\quad  \omega^2=e^3-ie^7,\quad \omega^3=e^2-ie^6,\quad \omega^4=e^1-ie^5,
\end{equation}
and calculate the (complex) structure equations of $(\frg,J)$.
We present them in different propositions according to the dimension of $\frg_2$.


\begin{proposition}\label{complexification_(1,3,5,8)+(1,3,5,6,8)}
Let $J$ be a strongly non-nilpotent complex structure on an 8-dimensional nilpotent
Lie algebra $\frg$ such that $\dim \frg_1=1$ and $\dim \frg_2=3$.
The pair $(\frg,J)$ is parametrized by
\begin{equation*}
\left\{\begin{array}{rcl}
d\omega^1 &=& 0,\\
d\omega^2 &=& A\,\omega^{1\bar 1} -B(\omega^{14}- \omega^{1\bar 4}),\\
d\omega^3 &=& (C-D)\,\omega^{12}-E\,(\omega^{14}- \omega^{1\bar 4})+F\,\omega^{1\bar 1}+(G+D)\,\omega^{1\bar 2}
  -H\,(\omega^{24}-\omega^{2\bar 4})\\
                                 &&  +(C-G)\,\omega^{2\bar1}+K\,\omega^{2\bar 2},\\
d\omega^4 &=& L\,\omega^{1\bar 1}+M\,\omega^{1\bar 2}+N\,\omega^{1\bar 3}-\bar{M}\,\omega^{2\bar 1}
  +i\,s\,\omega^{2\bar 2}+P\,\omega^{2\bar 3}-\bar{N}\,\omega^{3\bar 1}-\bar{P}\,\omega^{3\bar2},\end{array}\right.
\end{equation*}
where the coefficients are complex numbers with $s\in\mathbb R$ and satisfying the Jacobi identity. Moreover:
\begin{itemize}
\item[\textrm{(i)}] if $(\dim \frg_k)_{k}=(1,3,8)$ then $A=B=0$ and $\Real L=0$,
\item[\textrm{(ii)}] if $(\dim \frg_k)_{k}=(1,3,6,8)$ then $B=H=K=P=0$,
\item[\textrm{(iii)}] if $(\dim \frg_k)_{k}=(1,3,5,8)$ then $K=P=0$ and $\Real L=0$,
\item[\textrm{(iv)}] if $(\dim \frg_k)_{k}=(1,3,5,6,8)$ then $H=K=P=s=0$ and $\Real L\neq 0$.
\end{itemize}
\end{proposition}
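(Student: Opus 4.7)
The plan is to compute the complex structure equations case by case, starting from the explicit Lie bracket tables and doubly adapted bases produced in Section~\ref{sec:g34}.

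First, for each of the four sub-cases of Proposition~\ref{serie-dimg23} I would take the bracket tables from the proofs of Lemmas~\ref{lema2} and~\ref{lema3} (and Proposition~\ref{serie-dimg23} itself), apply the standard formula $de^i(X,Y)=-e^i([X,Y])$ to the real dual basis $\{e^1,\ldots,e^8\}$, and then plug into \eqref{base-eta} to obtain $d\omega^1,\ldots,d\omega^4$. Two structural observations simplify the bookkeeping considerably. First, the generators $X_4, JX_4$ never appear on the right-hand side of any bracket in the tables (they are the top-of-series generators), so $de^4=de^8=0$ and hence $d\omega^1=0$ in every sub-case. Second, the integrability of $J$, already verified throughout Section~\ref{sec:g34}, guarantees the vanishing of the $(0,2)$-part of each $d\omega^j$, so only $(2,0)$ and $(1,1)$ terms survive.

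Next, I would collect coefficients in the bigraded expansion of $d\omega^2, d\omega^3, d\omega^4$ and repackage them as complex numbers $A, B$ (for $d\omega^2$), $C, D, E, F, G, H, K$ (for $d\omega^3$) and $L, M, N, P, s$ (for $d\omega^4$, with $s\in\R$). The peculiar conjugate-paired form of $d\omega^4$ (the terms $-\bar M\,\omega^{2\bar 1}$, $-\bar N\,\omega^{3\bar 1}$, $-\bar P\,\omega^{3\bar 2}$ and $is\,\omega^{2\bar 2}$ with $s$ real) reflects the fact that the $(1,1)$-entries of $d\omega^4$ come from the $\frg_1 \oplus J\frg_1$-components of brackets of the form $[X_i, X_j]$, $[X_i, JX_j]$, etc., and the bracket tables show that the $JX_1$-component in many of these brackets vanishes (e.g.\ $[X_3,X_4], [X_4,JX_3], [X_3,JX_3]\in\langle X_1,X_2,JX_2\rangle$), forcing the corresponding real structure constants to combine into the displayed conjugate-paired complex expression.

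Finally, I would specialize to each sub-case and read off the vanishing coefficients directly from the arrangement of $\{\frg_k\}_k$. For instance, in case (i), $\frg_3=\frg$ forces $[X_4,\cdot], [JX_4,\cdot]\in\frg_2=\langle X_1, X_2, JX_2\rangle$, which kills every occurrence of $X_3, JX_3$ on the right-hand sides of these brackets and thus eliminates $A$ and $B$; moreover, $[X_4, JX_4]\in\frg_2$ has no $JX_1$-component, yielding $\Real L = 0$. Cases (ii)--(iv) follow by the same template: each inclusion $\frg_k\subseteq\langle\,\cdots\,\rangle$ cuts off certain components in the admissible brackets, which in turn annihilates specific coefficients among $B, H, K, P, s, \Real L$; the distinction between cases (iii) and (iv), namely whether $\Real L$ vanishes or not, comes from whether $JX_1\in\frg_4=\frg$ is attained only at the top (case (iii), forcing the $JX_1$-component of $[X_4,JX_4]\in\frg_3$ to vanish) or already at the level $\frg_4=\mathcal V_1\oplus\mathcal V_2\oplus\mathcal V_3$ (case (iv), where $[X_4,JX_4]\in\frg_4$ may carry a nontrivial $JX_1$-component). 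The main obstacle is the sheer amount of bookkeeping needed to pass from some twenty real structure constants to the complex parameters and to check that the repackaging matches the stated form exactly; a secondary subtlety is verifying, using the Jacobi identity computations from Section~\ref{sec:g34}, that the listed vanishing conditions are not merely sufficient but also necessary, so that the four cases are genuinely disjoint.
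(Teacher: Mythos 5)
Your proposal is correct and follows essentially the same route as the paper: the actual proof likewise takes the real bracket tables produced in Lemmas~\ref{lema2} and~\ref{lema3}, writes the dual structure equations via $de^i(X,Y)=-e^i([X,Y])$, complexifies through the $(1,0)$-basis \eqref{base-eta}, packages the real structure constants into the complex parameters, and reads off the case-by-case vanishing of $A,B,H,K,P,s,\Real L$ from the arrangement of the ascending central series exactly as you describe. Your two auxiliary observations (that $d\omega^1=0$ because no bracket carries a $\mathcal V_4$-component, and that integrability kills the $(0,2)$-parts) are consistent with, though not explicitly invoked in, the paper's direct computation.
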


\begin{proof}
On the one hand, from the Lie brackets obtained in the proof of Lemma \ref{lema2}
one has real structure equations:
$$\left\{\begin{array}{rcl}
de^1 &=& -a_{23}^1\,e^{23}-a_{24}^1\,e^{24}-b_{23}^1\,e^{27}-b_{24}^1\,e^{28}
                -a_{34}^1\,e^{34}-b_{23}^1\,e^{36}-b_{33}^1\,e^{37} \\
          && -b_{34}^1\,e^{38}-b_{24}^1\,e^{46}-b_{34}^1\,e^{47}-b_{44}^1\,e^{48}
                -a_{23}^1\,e^{67}-a_{24}^1\,e^{68}-a_{34}^1\,e^{78}, \\[5pt]
de^2 &=& -a_{34}^2\,e^{34}-b_{31}^2\,e^{35}-b_{33}^2\,e^{37}-b_{34}^2\,e^{38}
                -b_{41}^2\,e^{45}-b_{43}^2\,e^{47} \\
           && -b_{44}^2\,e^{48}-c_{31}^2\,e^{57}-c_{41}^2\,e^{58}-(a_{34}^2-c_{34}^2+c_{43}^2)\,e^{78},\\[5pt]
de^3 &=& -b_{44}^3\,e^{48}, \\[5pt]
de^4 &=& 0, \\[5pt]
de^5 &=& -c_{44}^1\,e^{48}, \\[5pt]
de^6 &=& -\alpha_{34}^2\,e^{34}-c_{31}^2\,e^{35}-c_{33}^2\,e^{37}-c_{34}^2\,e^{38}
                -c_{41}^2\,e^{45}-c_{43}^2\,e^{47} \\
           && -c_{44}^2\,e^{48}+b_{31}^2\,e^{57}+b_{41}^2\,e^{58}-(\alpha_{34}^2+b_{34}^2-b_{43}^2)\,e^{78},\\[5pt]
de^7 &=& -c_{44}^3\,e^{48}, \\[5pt]
de^8 &=& 0,
\end{array}\right.$$
where the structure constants satisfy $b_{44}^3=c_{44}^1=c_{44}^3=0$ for $(\dim \frg_k)_{k}=(1,3,8)$ and
$a_{23}^1=b_{23}^1=b_{31}^2=b_{33}^2=c_{31}^2=c_{33}^2=0$ for $(\dim \frg_k)_{k}=(1,3,6,8)$.

On the other hand, using the Lie brackets found in the proof of Lemma \ref{lema3} we get:
$$\left\{\begin{array}{rcl}
de^1 &=& -a_{24}^1\,e^{24}-b_{24}^1\,e^{28}
                -a_{34}^1\,e^{34}-b_{33}^1\,e^{37}-b_{34}^1\,e^{38} \\
          && -b_{24}^1\,e^{46}-b_{34}^1\,e^{47}-b_{44}^1\,e^{48}
                -a_{24}^1\,e^{68}-a_{34}^1\,e^{78}, \\[5pt]
de^2 &=& -a_{34}^2\,e^{34}-b_{31}^2\,e^{35}-b_{34}^2\,e^{38}
                -b_{41}^2\,e^{45}-b_{43}^2\,e^{47} \\
           && -b_{44}^2\,e^{48}-c_{31}^2\,e^{57}-c_{41}^2\,e^{58}-(a_{34}^2-c_{34}^2+c_{43}^2)\,e^{78},\\[5pt]
de^3 &=& -b_{41}^3\,e^{45}-b_{44}^3\,e^{48}-c_{41}^3\,e^{58}, \\[5pt]
de^4 &=& 0, \\[5pt]
de^5 &=& -c_{44}^1\,e^{48}, \\[5pt]
de^6 &=& -\alpha_{34}^2\,e^{34}-c_{31}^2\,e^{35}-c_{34}^2\,e^{38}
                -c_{41}^2\,e^{45}-c_{43}^2\,e^{47} \\
           && -c_{44}^2\,e^{48}+b_{31}^2\,e^{57}+b_{41}^2\,e^{58}-(\alpha_{34}^2+b_{34}^2-b_{43}^2)\,e^{78},\\[5pt]
de^7 &=& -c_{41}^3\,e^{45}-c_{44}^3\,e^{48}+b_{41}^3\,e^{58}, \\[5pt]
de^8 &=& 0,
\end{array}\right.$$
where the structure constants satisfy $c_{44}^1=0$ for $(\dim \frg_k)_{k}=(1,3,5,8)$, and
$b_{31}^2=c_{31}^2=b_{33}^1=0$ for $(\dim \frg_k)_{k}=(1,3,5,6,8)$.

Constructing the $(1,0)$-basis $\{\omega^k\}_{k=1}^4$ described in \eqref{base-eta} we get the desired result,
simply defining the following complex numbers:
\begin{equation}\label{letras}
\begin{array}{lllll}
A=\dfrac{c_{44}^3+i\,b_{44}^3}{2},
  & D=\dfrac{c_{43}^2+i\,b_{43}^2}{2},
  & G=\dfrac{a_{34}^2-i\,\alpha_{34}^2}{2},
  & L=\dfrac{c_{44}^1+i\,b_{44}^1}{2},
  & P=\dfrac{a_{23}^1+i\,b_{23}^1}{2}, \\[7pt]
B=\dfrac{c_{41}^3+i\,b_{41}^3}{2},
  & E=\dfrac{c_{41}^2+i\,b_{41}^2}{2},
  & H=\dfrac{c_{31}^2+i\,b_{31}^2}{2},
  & M=\dfrac{a_{34}^1+i\,b_{34}^1}{2},
  & s = \dfrac{b_{33}^1}{2}.\\[7pt]
C=\dfrac{c_{34}^2+i\,b_{34}^2}{2},
  & F=\dfrac{c_{44}^2+i\,b_{44}^2}{2},
  & K=\dfrac{c_{33}^2+i\,b_{33}^2}{2},
  & N=\dfrac{a_{24}^1+i\,b_{24}^1}{2},
  &
\end{array}
\end{equation}
\end{proof}


\begin{proposition}\label{complexification_(1,4,8)+(1,4,6,8)}
Let $J$ be a strongly non-nilpotent complex structure on an 8-dimensional nilpotent
Lie algebra $\frg$ such that $\dim \frg_1=1$ and $\dim \frg_2=4$.
The pair $(\frg,J)$ is parametrized by the structure equations
\begin{equation*}
\left\{\begin{array}{rcl}
d\omega^1 &=& 0,\\
d\omega^2 &=& A\,\omega^{1\bar 1},\\
d\omega^3 &=& -D (\omega^{12}-\omega^{1\bar 2})-E\,(\omega^{14}-\omega^{1\bar 4})+F\,\omega^{1\bar 1},\\
d\omega^4 &=& L\,\omega^{1\bar 1}+M\,\omega^{1\bar 2}+N\,\omega^{1\bar 3}-\bar{M}\,\omega^{2\bar 1}
  +i\,s\,\omega^{2\bar 2}-\bar{N}\,\omega^{3\bar 1},
\end{array}\right.\end{equation*}
where the coefficients are complex numbers with $s\in\mathbb R$, and satisfying the Jacobi identity. Furthermore:
\begin{itemize}
\item[\textrm{(i)}] if $(\dim \frg_k)_{k}=(1,4,8)$ then $(\Real A,\Real L)=(0,0)$,
\item[\textrm{(ii)}] if $(\dim \frg_k)_{k}=(1,4,6,8)$ then $(\Real A,\Real L)$ $\neq(0,0)$.
\end{itemize}
\end{proposition}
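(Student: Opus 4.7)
\medskip
\noindent\textbf{Proof proposal.} My plan is to run the same machine as in Proposition~\ref{complexification_(1,3,5,8)+(1,3,5,6,8)}, but now feeding it the brackets obtained in the $\dim\frg_2=4$ analysis of Section~\ref{sec:g34}. Concretely, I would start from the Lie brackets listed in~\eqref{bracket_dim45} together with those determined in~\eqref{bracket_caso11211} for the (only remaining) case in which $\frg_3$ is $J$-invariant, keeping in mind that the Jacobi identity forces $a_{23}^1=b_{22}^1=b_{23}^1=0$ as noted in the proof of Proposition~\ref{serie-dimg24}. Because $X_2, X_3\in\frg_2$, all brackets of the form $[X_2,\cdot\,]$ and $[X_3,\cdot\,]$ land inside $\langle X_1\rangle$, and in \eqref{bracket_caso11211} the bracket $[X_4,JX_1]$ has no $X_3$ or $JX_3$ component; this will translate, in the complex picture, into the vanishing of many of the parameters that were present in Proposition~\ref{complexification_(1,3,5,8)+(1,3,5,6,8)}.

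Next I would apply $de(X,Y)=-e([X,Y])$ to the dual basis $\{e^1,\dots,e^8\}$ of~$\mathcal B=\{X_1,X_2,X_3,X_4,JX_1,JX_2,JX_3,JX_4\}$ to obtain the real structure equations for $\frg^*$, and then pass to the $(1,0)$-basis $\omega^1,\omega^2,\omega^3,\omega^4$ defined in~\eqref{base-eta}. Expressing $e^k$ in terms of $\omega^j+\bar\omega^j$ and $e^{k+4}$ in terms of $i(\omega^j-\bar\omega^j)$, and then regrouping, one obtains $d\omega^k\in\Lambda^{2,0}\oplus\Lambda^{1,1}$ automatically — the vanishing of the $(0,2)$-part is equivalent to the Nijenhuis equations that were already enforced while setting up~\eqref{bracket_dim45} and~\eqref{bracket_caso11211}. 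Using the same dictionary~\eqref{letras} between real structure constants and complex coefficients, and observing that the vanishing constants force $B=C=G=H=K=P=0$, the resulting equations collapse to precisely the ones stated in the proposition, with free complex parameters $A,D,E,F,L,M,N$ and a single real parameter $s$. The Jacobi identity for~$\frg$ is equivalent to $d^2\omega^k=0$ for $k=1,\ldots,4$, which gives the quadratic constraints alluded to in the statement.

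Finally, the dichotomy between the two ascending types is read off from the single bracket $[X_4,JX_4]$. By~\eqref{letras}, $\Real L=\tfrac12 c^1_{44}$ and $\Real A=\tfrac12 c^3_{44}$. The condition $\frg_3=\frg$, equivalently ascending type $(1,4,8)$, requires $[X_4,JX_4]\in\frg_2=\langle X_1,X_2,X_3,JX_2\rangle$, i.e.\ both $JX_1$- and $JX_3$-components of this bracket must vanish, which is exactly $(\Real A,\Real L)=(0,0)$. Conversely, $\dim\frg_3=6$ with $\frg_3=\mathcal V_1\oplus\mathcal V_2\oplus\mathcal V_3$ and $\frg_4=\frg$, i.e.\ type $(1,4,6,8)$, forces at least one of those components to be nonzero, hence $(\Real A,\Real L)\neq(0,0)$.

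The main obstacle I anticipate is not conceptual but bookkeeping: one must carefully track the change of basis~\eqref{base-eta} and the identifications~\eqref{letras} to confirm that the many real parameters set to zero by the $\dim\frg_2=4$ constraints really do collapse the general formulas of Proposition~\ref{complexification_(1,3,5,8)+(1,3,5,6,8)} to the short form displayed here, and then check that the translation of the Jacobi identity into the complex setting is clean enough to be absorbed into the phrase ``satisfying the Jacobi identity''.
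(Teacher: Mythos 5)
Your proposal is correct and follows essentially the same route as the paper: compute the real structure equations from the brackets \eqref{bracket_dim45} and \eqref{bracket_caso11211} (with $a_{23}^1=b_{22}^1=b_{23}^1=0$ from the Jacobi analysis of Proposition~\ref{serie-dimg24}), pass to the $(1,0)$-basis \eqref{base-eta}, apply the dictionary \eqref{letras} so that $B=C=G=H=K=P=0$, and read the dichotomy between the types $(1,4,8)$ and $(1,4,6,8)$ off the $JX_1$- and $JX_3$-components $c_{44}^1,c_{44}^3$ of $[X_4,JX_4]$, i.e.\ off $(\Real L,\Real A)$. Your verification of which complex parameters survive matches the paper's computation exactly.
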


\begin{proof}
Using the Lie brackets~\eqref{bracket_dim45} and \eqref{bracket_caso11211} we obtain the  real structure equations:
$$\left\{\begin{array}{rcl}
de^1 &=& -a_{24}^1\,e^{24}-b_{24}^1\,e^{28}
                -a_{34}^1\,e^{34}-b_{33}^1\,e^{37}-b_{34}^1\,e^{38}
           -b_{24}^1\,e^{46}-b_{34}^1\,e^{47}-b_{44}^1\,e^{48}
                -a_{24}^1\,e^{68}-a_{34}^1\,e^{78}, \\[5pt]
de^2 &=& -b_{41}^2\,e^{45}-b_{43}^2\,e^{47}-b_{44}^2\,e^{48}-c_{41}^2\,e^{58}
                -c_{43}^2\,e^{78},\\[5pt]
de^3 &=& -b_{44}^3\,e^{48}, \\[5pt]
de^4 &=& 0, \\[5pt]
de^5 &=& -c_{44}^1\,e^{48}, \\[5pt]
de^6 &=& -c_{41}^2\,e^{45}-c_{43}^2\,e^{47} -c_{44}^2\,e^{48}+b_{41}^2\,e^{58}+b_{43}^2\,e^{78},\\[5pt]
de^7 &=& -c_{44}^3\,e^{48}, \\[5pt]
de^8 &=& 0.
\end{array}\right.$$
Now, construct the $(1,0)$-basis $\{\omega^k\}_{k=1}^4$ described in \eqref{base-eta} to get
the result, where the parameters follow~\eqref{letras}.
\end{proof}


\begin{proposition}\label{complexification_(1,5,8)+(1,5,6,8)}
Let $J$ be an SnN complex structure on an 8-dimensional NLA $\frg$
such that $\dim \frg_1=1$ and $\dim \frg_2=5$.
The pair $(\frg,J)$ is parametrized by the structure equations
\begin{equation*}
\left\{\begin{array}{l}
d\omega^1 \ \, = \ \, 0,\\
d\omega^2 \ \, = \ \, A\,\omega^{1\bar 1} -B(\omega^{14}- \omega^{1\bar 4}),\\
d\omega^3 \ \, = \ \, F\,\omega^{1\bar 1} -E(\omega^{14}- \omega^{1\bar 4}),\\
d\omega^4 \ \, = \ \, L\,\omega^{1\bar 1}+M\,\omega^{1\bar 2}+N\,\omega^{1\bar 3}-\bar{M}\,\omega^{2\bar 1}
  +i\,s\,\omega^{2\bar 2}+P\,\omega^{2\bar 3}
  -\bar{N}\,\omega^{3\bar 1}-\bar P\,\omega^{3\bar 2}+i\,t\,\omega^{3\bar 3},
\end{array}\right.
\end{equation*}
where the coefficients are complex numbers, $s, t\in\mathbb R$, and they satisfy the Jacobi identity. Moreover:
\begin{itemize}
\item[\textrm{(i)}] if $(\dim \frg_k)_{k}=(1,5,8)$ then $\Real L=0$,
\item[\textrm{(ii)}] if $(\dim \frg_k)_{k}=(1,5,6,8)$ then $\Real L\neq 0$.
\end{itemize}
\end{proposition}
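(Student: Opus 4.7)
The plan is to mirror the proofs of Propositions~\ref{complexification_(1,3,5,8)+(1,3,5,6,8)} and~\ref{complexification_(1,4,8)+(1,4,6,8)}: start from the real Lie brackets that were determined in the proof of Proposition~\ref{serie-dimg25}, dualize them to obtain real structure equations on $\mathcal B^*=\{e^1,\ldots,e^8\}$, and then pass to the $(1,0)$-basis $\{\omega^k\}_{k=1}^4$ given by~\eqref{base-eta}. Finally, the two subcases are distinguished by the position of $JX_1$ in the ascending central series, which translates into a condition on $\Real L$.

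More concretely, I would first record the brackets \eqref{bracket_dim5} together with the additional brackets \eqref{bracket_caso1111} (when $\mathcal V_4\cap\frg_3\neq\{0\}$, so $\frg_3=\frg$) and the analogous brackets for $\mathcal V_4\cap\frg_3=\{0\}$ (so $\frg_3=\mathcal V_1\oplus\mathcal V_2\oplus\mathcal V_3$ and $\frg_4=\frg$); the only difference between the two sub-cases is that the bracket $[X_4,JX_4]$ acquires a component $c_{44}^1\,JX_1$ in the second one, with $c_{44}^1\neq 0$. Applying the formula $de(X,Y)=-e([X,Y])$ to every pair of basis vectors then yields the real structure equations
\begin{equation*}
\left\{\begin{array}{rcl}
de^1 &=& -a_{24}^1\,e^{24}-b_{24}^1\,e^{28}-a_{34}^1\,e^{34}-b_{33}^1\,e^{37}-b_{34}^1\,e^{38}-b_{24}^1\,e^{46}-b_{34}^1\,e^{47}\\
     && -b_{44}^1\,e^{48}-a_{24}^1\,e^{68}-a_{34}^1\,e^{78}-b_{23}^1\,e^{27}-a_{23}^1\,e^{23}-a_{23}^1\,e^{67}-b_{23}^1\,e^{36},\\[3pt]
de^2 &=& -b_{41}^2\,e^{45}-b_{44}^2\,e^{48}-c_{41}^2\,e^{58},\\[3pt]
de^3 &=& -b_{41}^3\,e^{45}-b_{44}^3\,e^{48}-c_{41}^3\,e^{58},\\[3pt]
de^4 &=& 0,\quad de^5\ =\ -c_{44}^1\,e^{48},\quad de^8\ =\ 0,\\[3pt]
de^6 &=& -c_{41}^2\,e^{45}-c_{44}^2\,e^{48}+b_{41}^2\,e^{58},\\[3pt]
de^7 &=& -c_{41}^3\,e^{45}-c_{44}^3\,e^{48}+b_{41}^3\,e^{58},
\end{array}\right.
\end{equation*}
with $c_{44}^1=0$ precisely in case~(i) and $c_{44}^1\neq 0$ in case~(ii).

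Next, using \eqref{base-eta} I would invert the relation to express each $e^i$ in terms of $\omega^k,\bar\omega^k$ and substitute into the real differentials. Since $d\omega^1=-(de^4-i\,de^8)=0$ is immediate, the key bookkeeping is for $d\omega^2,\,d\omega^3,\,d\omega^4$: bracketing the components by type $(2,0)$, $(1,1)$, and $(0,2)$, one sees that the vanishing of the Nijenhuis tensor (already imposed when deriving the real brackets) automatically kills the $(0,2)$-part, so the remaining terms organize themselves into the asserted combinations $\omega^{1\bar1}$, $\omega^{14}-\omega^{1\bar4}$, etc. Defining the complex parameters $A,B,E,F,L,M,N,P,s,t$ as in \eqref{letras}, together with $t=b_{33}^{\,\text{suitable}}/2$ coming from the new $JX_3$--$JX_3$ type term (analogous to the real $s$), gives the stated form of $d\omega^2,d\omega^3,d\omega^4$. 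The reality of $s$ and $t$ corresponds to the skew-Hermitian nature of the coefficients of $\omega^{2\bar2}$ and $\omega^{3\bar3}$, and the conjugate pairs $(M,-\bar M)$, $(N,-\bar N)$, $(P,-\bar P)$ arise from the shared real constants $a^1_{ij}, b^1_{ij}$ appearing in both $de^1$ and $de^5$.

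Finally, cases (i) and (ii) are separated by looking at the coefficient of $\omega^{1\bar1}$ in $d\omega^4$: since $2L=c_{44}^1+i\,b_{44}^1$, one has $\Real L=c_{44}^1/2$, which vanishes exactly when $\frg_3=\frg$ (case~(i), ascending type $(1,5,8)$) and is nonzero when $JX_1\in\frg_3\smallsetminus\frg_2$ (case~(ii), ascending type $(1,5,6,8)$). The main obstacle is not any single computation but the careful linear-algebra bookkeeping in matching real parameters to complex ones consistently with the Nijenhuis constraints; however, this is completely routine once one adopts the dictionary \eqref{letras} already used in the previous two propositions, so essentially no new difficulty arises.
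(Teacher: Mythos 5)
Your proposal follows exactly the paper's route: take the real brackets \eqref{bracket_dim5} together with \eqref{bracket_caso1111} (and its variant with the extra $c_{44}^1\,JX_1$ component of $[X_4,JX_4]$ when $\mathcal V_4\cap\frg_3=\{0\}$), dualize via $de(X,Y)=-e([X,Y])$, pass to the $(1,0)$-basis \eqref{base-eta}, and read off the complex parameters through the dictionary \eqref{letras}; the case split via $\Real L=c_{44}^1/2$ is also the paper's. In outline this is correct.

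There is, however, one concrete slip, and it lands precisely on the single feature that distinguishes this proposition from the two preceding ones, namely the extra parameter $t$. Your displayed $de^1$ omits the term $-b_{22}^1\,e^{26}$ coming from the bracket $[X_2,JX_2]=b_{22}^1\,X_1$ of \eqref{inicio-brackets} (in the cases $\dim\frg_2=3,4$ this constant is killed by the Jacobi identity, but for $\dim\frg_2=5$ it survives). Since $\omega^3=e^2-i\,e^6$ is the form dual to the pair $X_2,JX_2$, one has $\omega^{3\bar3}=2i\,e^{26}$, so it is exactly this missing term that produces $i\,t\,\omega^{3\bar3}$ in $d\omega^4$, with $t=b_{22}^1/2$. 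Your attribution of $t$ to a ``$JX_3$--$JX_3$ type term'' with some constant $b_{33}^{\ast}$ confuses the labelling: the $X_3,JX_3$ pair is dual to $\omega^2$ and its bracket $[X_3,JX_3]=b_{33}^1X_1$ already accounts for $s=b_{33}^1/2$ in $i\,s\,\omega^{2\bar2}$. Followed literally, your computation would either drop the $i\,t\,\omega^{3\bar3}$ term or assign it the wrong structure constant; once $t=b_{22}^1/2$ is restored, the argument coincides with the paper's proof.
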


\begin{proof}
Directly from the Lie brackets~\eqref{bracket_dim5} and~\eqref{bracket_caso1111} we obtain the  real structure equations:
$$\left\{\begin{array}{rcl}
de^1 &=& -a_{23}^1\,e^{23}-a_{24}^1\,e^{24}-b_{22}^1\,e^{26}-b_{23}^1\,e^{27}-b_{24}^1\,e^{28}
                -a_{34}^1\,e^{34}-b_{23}^1\,e^{36}-b_{33}^1\,e^{37}, \\
          && -b_{34}^1\,e^{38}-b_{24}^1\,e^{46}-b_{34}^1\,e^{47}-b_{44}^1\,e^{48}
                -a_{23}^1\,e^{67}-a_{24}^1\,e^{68}-a_{34}^1\,e^{78}, \\[5pt]
de^2 &=& -b_{41}^2\,e^{45}-b_{44}^2\,e^{48}-c_{41}^2\,e^{58},\\[5pt]
de^3 &=& -b_{41}^3\,e^{45}-b_{44}^3\,e^{48}-c_{41}^3\,e^{58}, \\[5pt]
de^4 &=& 0, \\[5pt]
de^5 &=& -c_{44}^1\,e^{48}, \\[5pt]
de^6 &=& -c_{41}^2\,e^{45}-c_{44}^2\,e^{48}+b_{41}^2\,e^{58},\\[5pt]
de^7 &=& -c_{41}^3\,e^{45}-c_{44}^3\,e^{48}+b_{41}^3\,e^{58}, \\[5pt]
de^8 &=& 0,
\end{array}\right.$$
where $c_{44}^1=0$ for $(\dim \frg_k)_{k}=(1,5,8)$ and
$c_{44}^1\neq 0$ for $(\dim \frg_k)_{k}=(1,4,6,8)$.
Constructing the $(1,0)$-basis $\{\omega^k\}_{k=1}^4$ given by \eqref{base-eta}, we get
our complex equations above,
where $t=\frac{b_{22}^1}{2}$ and the rest of the parameters follow~\eqref{letras}.
\end{proof}


\section*{Acknowledgments}
\noindent
Some results of this paper overlap with the first author's Ph. D. thesis at the
University of Zaragoza, under the supervision of the second and third authors; we would like to thank
D. Angella, A. Fino, S. Ivanov and M.T. Lozano for their interest and useful comments
and suggestions on the subject.
The final version of this paper was prepared during the stay of the first and the last authors at the
Fields Institute. They would like to thank this institution for its kind hospitality and support.
This work has been partially supported by the projects MINECO (Spain) MTM2014-58616-P, and
Gobierno de Arag\'on/Fondo Social Europeo--Grupo Consolidado E15 Geometr\'{\i}a.


\end{document}